\let\originalleft\left
\let\originalright\right
\renewcommand{\left}{\mathopen{}\mathclose\bgroup\originalleft}
\renewcommand{\right}{\aftergroup\egroup\originalright}
\newlength{\bibitemsep}
\newlength{\bibparskip}\setlength{\bibparskip}{0pt}
\let\oldthebibliography\thebibliography
\renewcommand\thebibliography[1]{\oldthebibliography{#1}
	\setlength{\parskip}{\bibitemsep}
	\setlength{\itemsep}{\bibparskip}}
\newcommand{\ensemblenombre}[1]{\mathbb{#1}}
\newcommand{\N}{\ensemblenombre{N}}
\newcommand{\R}{\ensemblenombre{R}}
\newcommand{\Z}{\ensemblenombre{Z}}
\renewcommand{\P}{\mathbb{P}}
\newcommand{\E}{\mathbb{E}}
\newcommand{\Ec}[1]{\mathbb{E} \left[#1\right]}
\newcommand{\Pp}[1]{\mathbb{P} \left(#1\right)}
\newcommand{\Ecsq}[2]{\mathbb{E} \left[#1\mathrel{}\middle|\mathrel{}#2\right]}
\newcommand{\Ppsq}[2]{\mathbb{P} \left(#1\mathrel{}\middle|\mathrel{}#2\right)}
\newcommand{\intervalle}[4]{\mathopen{#1}#2\mathclose{}\mathpunct{},#3\mathclose{#4}}
\newcommand{\intervalleff}[2]{\intervalle{[}{#1}{#2}{]}}
\newcommand{\intervallefo}[2]{\intervalle{[}{#1}{#2}{)}}
\newcommand{\intervalleoo}[2]{\intervalle{(}{#1}{#2}{)}}
\newcommand{\intervalleentier}[2]{\intervalle\llbracket{#1}{#2}\rrbracket}
\newcommand{\petito}[1]{o\mathopen{}\left(#1\right)}
\newcommand{\grandO}[1]{O\mathopen{}\left(#1\right)}
\newcommand{\ind}[1]{\mathbb{1}_{\lbrace #1 \rbrace}}  
\newcommand{\1}{\mathbb{1}}
\newcommand{\ttu}{\mathtt{u}}
\newcommand{\ttv}{\mathtt{v}}
\newcommand{\ttT}{\mathtt{T}}
\DeclareMathOperator{\haut}{ht}
\DeclareMathOperator{\diam}{diam}
\DeclareMathOperator{\wrt}{WRT}
\newcommand{\diff}{\mathop{}\mathopen{}\mathrm{d}}
\newcommand{\abs}[1]{\left\lvert #1 \right\rvert}
\newmdtheoremenv{theorem}{Theorem}[section]
\newmdtheoremenv{proposition}[theorem]{Proposition}
\newtheorem{lemma}[theorem]{Lemma}
\newtheorem{remark}[theorem]{Remark}
\begin{document}
%%%%%%%%%%%%%%%%%%%%%%%%%%%%%%%%%%%%%%%%%%%%%%%%%%%%%%%%%%%%%%%%%%%%%%%%%%%%%%%%

\title{Height of weighted recursive trees with sub-polynomially growing total weight}
\author{Michel Pain\footnote{Institut de Mathématiques de Toulouse (UMR 5219), Université de Toulouse, CNRS.
%, Université Paul Sabatier, 118 route de Narbonne, 31062 Toulouse Cedex 9 , France.
}
\and
Delphin Sénizergues\footnote{Department of Mathematics, University of British Columbia}}
\maketitle

\begin{abstract}
Weighted recursive trees are built by adding successively vertices with predetermined weights to a tree: each new vertex is attached to a parent chosen at  random with probability proportional to its weight.
In the case where the total weight of the tree at step $n$ grows polynomially in $n$, we obtained in \cite{painsenizergues2022} an asymptotic expansion for the height of the tree, which falls into the university class of the maximum of branching random walks.
In this paper, we consider the case of a total weight growing sub-polynomially in $n$ and obtain asymptotics for the height of the tree in several regimes,
showing that universality is broken and exhibiting new behaviors. 
%The behavior is similar to the one of a time-inhomogeneous branching random walk, but in a regime which has not been studied so far.
\end{abstract}

%%%%%%%%%%%%%%%%%%%%%%%%%%%%%%%%%%%%%%%%%%%%%%%%%%%%%%%%%%%%%%%%%%%%
\section{Introduction}
%%%%%%%%%%%%%%%%%%%%%%%%%%%%%%%%%%%%%%%%%%%%%%%%%%%%%%%%%%%%%%%%%%%%

Weighted recursive trees (WRTs) are an example of a model of growing random trees that has attracted a lot of attention in the recent years, see for example \cite{eslava2021,fountoulakis2019,hiesmayrislak2020,iyer2020,lodewijks2020,lodewijks2021,mailleruribebravo2019,painsenizergues2022,senizergues2021}.
This model is parametrized by a whole sequence $(w_n)_{n\geq 1}$ of non-negative numbers that we call \emph{weights} and that account for some inhomogeneity between vertices in the growth dynamic of the tree. 
We are interested in the impact of these weights on the behavior of the height of the obtained trees.
In our previous paper \cite{painsenizergues2022}, we considered weight sequences for which the height of the trees was comparable to other well-known models of growing trees and fell in the same universality class as the maximal displacement for branching random walks.
We refer the reader to the introduction of~\cite{painsenizergues2022}, particularly Section~1.4, for a discussion containing a review of the literature concerning the asymptotic height of related models of growing random trees, links with branching processes and a review of the literature about WRTs.
In the current paper, we work with weight sequences that take us out of this universality class, 
%in a regime where correlations between high vertices are stronger. 
in a regime where correlations due to the tree structure are stronger and have a more significant impact on the height of the tree.
%We obtain asymptotic results for the height of the tree in different regimes for the weight sequence.% that quantify how fast the sequence is going to $0$. 

%%%%%%%%%%%%%%%%%%%%%%%%%%%%%%%%%%%%%%%%%%%%%%%%%%%%
\subsection{Presentation of the model}
%%%%%%%%%%%%%%%%%%%%%%%%%%%%%%%%%%%%%%%%%%%%%%%%%%%%

\paragraph{Definition of WRTs.}
Let us define the model of weighted recursive trees, first introduced in \cite{borovkov2006} by Borovkov and Vatutin. 
For any sequence of non-negative real numbers $(w_n)_{n\geq 1}$ with $w_1>0$, we define the distribution $\wrt((w_n)_{n\geq 1})$ on sequences of growing rooted trees, which is called the \emph{weighted recursive tree with weights $(w_n)_{n\geq 1}$}. 
We construct a sequence of rooted trees $(\ttT_n)_{n\geq 1}$ starting from $\ttT_1$ containing only one root-vertex $\ttu_1$ 
by letting it evolve in the following manner: the tree $\ttT_{n+1}$ is obtained from $\ttT_n$ by adding a vertex $\ttu_{n+1}$ with label $n+1$. 
The parent of this new vertex is chosen to be the vertex with label $K_{n+1}$, where
\[
\forall k\in \{1,\dots,n\}, \qquad 
\Ppsq{K_{n+1}=k}{\ttT_n} = \frac{w_k}{W_n},
\]
denoting, for each $n\geq 1$, $W_n\coloneqq \sum_{i=1}^nw_i$ the sum of the $n$ first weights.
A key quantity is the sequence $(a_n)_{n\geq 1}$ defined as 
\begin{equation}\label{eq:def an}
a_n \coloneqq \sum_{i=1}^n \frac{w_i}{W_i}, \qquad \text{ for any }n\geq 1.
\end{equation}
Its role can be explained by the following fact (see Remark~\ref{rem:consequence_many-to-one}): the height of $\ttu_{n+1}$ has the same distribution as $\sum_{i=1}^n B_i$, where the $B_i$'s are independent Bernoulli r.v.\@ with parameter $w_i/W_i$.
In particular, the height of $\ttu_{n+1}$ has mean $a_n$ and
%, more importantly for the coming discussion, 
its variance is $\sum_{i=1}^n \frac{w_i}{W_i} (1-\frac{w_i}{W_i})$, which behaves like $a_n$ as $n \to \infty$ if $w_n/W_n \to 0$.
% \sim a_n$ as $n \to \infty$ if $w_n/W_n \to 0$ fast enough.
Hence, if moreover $a_n \to \infty$, the height of $\ttu_{n+1}$, and therefore the height of a vertex chosen in $\ttT_n$ proportionally to its weight, is close to $a_n$ with fluctuations of the order of $\sqrt{a_n}$.

\paragraph{Objective of the paper.}
In \cite{painsenizergues2022}, in the case where the total weight $W_n$ grows polynomially, that is $W_n = \lambda \cdot n^\gamma + O(n^{\gamma-\varepsilon})$ for some $\lambda,\gamma,\varepsilon > 0$,
we proved (under some weak additional assumption) the following asymptotic expansion for the height of $\ttT_n$,
\begin{equation} \label{eq:height_polynomial_case}
	\haut(\ttT_n) = c_1(\gamma) \log n - c_2(\gamma) \log \log n + O_\P(1),
\end{equation}
as $n \to \infty$, where $c_1(\gamma),c_2(\gamma)$ are positive constants depending only on $\gamma$, and the term $O_\P(1)$ denotes a tight sequence of random variables.
This extended a previous result \cite{senizergues2021} by one of the authors, which described the first order for the height. Moreover, because of the precise value of $c_2(\gamma)$, this showed that, in this regime, the height of $\ttT_n$ falls into the universality class of the maximum of branching random walks.
By heuristics presented in Section \ref{subsec:BRW}, we believe that the regime where this universal behavior holds is exactly the one where $W_n = n^{\gamma+o(1)}$ (or equivalently $a_n \sim \gamma \log n$).
%In that case, the height of WRTs behaves as the maximum of a branching random walk (BRW) at time $t=\log n$ and the strategy of our proofs is inspired by the BRW literature.
%This non-rigorous analogy is presented in more details in Section \ref{subsec:BRW}, but we mention here that the height of $\ttu_n$ has a variance roughly $a_n \sim \gamma \log n = \gamma t$, which corresponds to the classical BRW case where the variance of the position of a particle at time $t$ is proportional to $t$.

%In this paper, we show this universal behavior does not hold anymore when the total weight $W_n$ grows sub-polynomially or converges to a finite limit.
In this paper, we show that new behaviors, outside of this universality class, appear when the total weight $W_n$ grows sub-polynomially or converges to a finite limit.
This corresponds to the case where $a_n$ grows sub-logarithmically or converges. 
%In terms of branching random walks, this corresponds to the case where the variance of jumps decreases with time. 
%The maximum of such BRW has already been studied \cite{bovierkurkova2004,fangzeitouni2012b,mallein2015,maillardzeitouni2016}, but not in a regime similar to the one appearing here, see Section~\ref{subsec:BRW} for further comments.
%In this regime, correlations due to the tree structure are stronger and have a more significant impact on the height of the tree.
%
To state our results, we distinguish two main cases.
In the first main case, we consider sequences of weights such that $a_n$ varies as powers of $\log n$: either $a_n$ grows like $(\log n)^p$ for some $p \in (0,1)$, or $a_n$ converges to some finite limit at speed $(\log n)^{-q}$ for some $q > 0$.
The case of a slower divergence or convergence is also included and the result in this first main case turns out to be unified.
%The case of a slower divergence or convergence is also included: the result in this case is quite unified, and the fact that $a_n$ diverges or converges does not affect the first order of height of the tree.
%
In the second main case, we investigate the case where $a_n$ converges at a speed $\exp(-\log^\beta n)$ for some $\beta > 0$.
This leads to three sub-cases with different asymptotics for the height of $\ttT_n$.
Before proving these results, we first establish general criteria for obtaining some upper and lower bounds (Proposition~\ref{prop:upper_bound} and Proposition~\ref{prop:lower_bound}).
These bounds are valid in great generality and could also be used to obtain the asymptotic expansion for $\haut(\ttT_n)$ in regimes that are not treated in this paper.

%, which are valid in a greater generality than the set of regimes studied in this paper.

Note that in this paper, we only state results on the height $\haut(\ttT_n)$ of the tree, whereas in \cite{painsenizergues2022} we also expressed an expansion similar to \eqref{eq:height_polynomial_case} for the diameter $\diam(\ttT_n)$, which can be obtained from the latter by just multiplying all the terms by 2.  
The same would hold in the setting studied in this paper by the same argument, see Remark \ref{rem:diameter}.

%It is meaningful to compare the height of $\ttT_n$ with the maximum of $n$ independent variables with the same distribution as $\sum_{i=1}^{n-1} B_i$, where, we recall, the $B_i$'s are independent Bernoulli r.v.\@ with parameter $w_i/W_i$.
%In all the regimes that we consider, we can compare the asymptotic expansion of the height of $\ttT_n$ with that of the maximum of such i.i.d.\@ family of random variables and identify the first term for which they differ. This is discussed in Section~\ref{sec:crude upper bound} of the appendix.  

\paragraph{Slowly varying function.} In all cases mentioned above, we allow corrective factors in the behavior of $a_n$ and those are expressed 
using a slowly varying function satisfying some regularity conditions.
For a function $L \colon [1,\infty) \to (0,\infty)$, we consider the following family of assumptions \ref{hyp_SV_k} for $k\geq 0$:
\begin{enumerate}[label=$(\mathrm{SV}_\arabic*)$,leftmargin=*,topsep=8pt]
	\setcounter{enumi}{-1}
	\item\label{hyp_SV_0} 
	$L$ is slowly varying, that is $L$ is measurable and, for any $\lambda > 0$, $L(\lambda x)/L(x) \to 1$ as $x \to \infty$;
\end{enumerate}
or, for some integer $k \geq 1$,
\begin{enumerate}[label=$(\mathrm{SV}_k)$,leftmargin=*,topsep=8pt]
	\item\label{hyp_SV_k} 
	$L$ is $k$ times differentiable and,
	for any $1 \leq i \leq k$, $x^i L^{(i)}(x)/L(x) \to 0$ as $x \to \infty$.
%	\[
%	\forall i \in \llbracket 1,k \rrbracket, \quad 
%	\frac{x^i L^{(i)}(x)}{L(x)} \xrightarrow[x \to \infty]{} 0.
%	\]
\end{enumerate}
Note that \hyperref[hyp_SV_k]{$(\mathrm{SV}_1)$} implies \ref{hyp_SV_0}, see e.g. Lemma \ref{lem:slowly_varying}.
Moreover, \ref{hyp_SV_k} is the typical behavior that one would want for a $k$ times differentiable slowly varying function: \ref{hyp_SV_k} is a restriction to the $k$ first derivatives of the definition of a smoothly varying function of index 0, see e.g.\@ \cite[Equation (1.8.1')]{binghamgoldieteugels1989}.

\paragraph{Notation.}
Some of our assumptions and results are expressed using the Landau big-$O$ and small-$o$ notation: we write $x_n=\grandO{y_n}$ if there exists a constant $C$ such that $\abs{x_n}\leq C \abs{y_n}$ for all $n\geq 1$; we write $x_n=\petito{y_n}$ if for every $\epsilon>0$ there exists $N$ such that for all $n\geq N$ we have $\abs{x_n}\leq \epsilon \abs{y_n}$. 
Moreover, we write $x_n \sim y_n$ if $x_n=y_n(1+\petito{1})$.
%$y_n \neq 0$ for $n$ large enough and $x_n/y_n \to 1$ as $n \to \infty$.
%
Throughout the paper, we also denote $\N \coloneqq \{0,1,2,\dots\}$ the set of non-negative integers, and use the notation $\intervalleentier{a}{b} \coloneqq \intervalleff{a}{b}\cap \Z$ and $\llbracket a, b \mathclose{\llbracket} \coloneqq \intervallefo{a}{b}\cap \Z$ for integer intervals.
We denote by $C$ a positive constant that can change from line to line.

\paragraph{Relation between $a_n$ and $W_n$.}
We express our assumptions on the sequence of weights $(w_i)_{i\geq 1}$ in terms of the asymptotic behavior of $a_n$, defined in \eqref{eq:def an}, and a control on the quantity $\sum_{i=n}^{\infty} w_i^2/W_i^2$. 
In fact, we prove in Section \ref{sec:correspondence behavior an and log Wn} the following relation between $a_n$ and $W_n$:
if $\sum_{i=1}^{\infty} w_i^2/W_i^2 < \infty$, then there exists a constant $K$ such that, as $n \to \infty$,
%, which is the content of Lemma~\ref{lem:correspondence behavior an and log Wn} in the appendix, for some constant $K$,
\begin{equation}\label{eq:transfer assum Wn an}
a_n = \log W_n + K + O\left(\sum_{i=n+1}^{\infty}\left(\frac{w_i}{W_i}\right)^2\right).
\end{equation}
Hence, it is easy to reinterpret our assumptions in term of the asymptotic behavior of $W_n$ instead.
Note that the last display does not assume that $a_n$ (or equivalently $W_n$) tends to infinity with $n$, and is also useful when those quantities are bounded.

%%%%%%%%%%%%%%%%%%%%%%%%%%%%%%%%%%%%%%%%%%%%%%%%%%%%
\subsection{Variance varying like powers of \texorpdfstring{$\log n$}{log n}}
%%%%%%%%%%%%%%%%%%%%%%%%%%%%%%%%%%%%%%%%%%%%%%%%%%%%
\label{sec:powers_of_log_intro}

We will first state results about the case where $a_n$ can be written as 
\begin{equation} \label{eq:assumption_a_n_1}
a_n = \int_1^{\log n} x^{-\alpha} L(x) \diff x 
+ o\left( (\log n)^{-1-\alpha} (\log \log n)^2 L(\log n) \right),
\end{equation}
where $\alpha > 0$ and $L$ is a positive function on $[1,\infty)$ satisfying \hyperref[hyp_SV_k]{$(\mathrm{SV}_2)$}.
By Lemma \ref{lem:from_J_to_L}, such a function $L$ exists if, for some function $J \colon [1,\infty) \to (0,\infty)$ satisfying \hyperref[hyp_SV_k]{$(\mathrm{SV}_3)$}, we have
\begin{itemize}
	\item $a_n = (\log n)^{1-\alpha} J(\log n) + o\left( (\log n)^{-1-\alpha} (\log \log n)^2 J(\log n) \right)$ if $\alpha \in (0,1)$;
	\item $a_\infty - a_n = (\log n)^{1-\alpha} J(\log n) + o\left( (\log n)^{-1-\alpha} (\log \log n)^2 J(\log n) \right)$ if $\alpha > 1$, where $\displaystyle a_\infty = \lim_{n\to\infty} a_n$.
\end{itemize}
%This implies in particular $a_n \sim (\log n)^{1-\alpha} L(\log n) /(1-\alpha)$ if $\alpha < 1$ and $a_\infty - a_n \sim (\log n)^{1-\alpha} L(\log n) /(\alpha-1)$ if $\alpha > 1$.
In the case $\alpha =1$, no such general criterion exists and $a_n$ can either converge or go to infinity.
We also assume that
\begin{equation} \label{eq:ass_somme_carre}
\sum_{i\geq n} \frac{w_i^2}{W_i^2} = O \left( \frac{1}{n} \right).
\end{equation}
This assumption guarantees some regularity for the weight sequence $(w_n)_{n\geq 1}$ and ensures for example that we cannot be in a degenerate case where most of the weight of the tree is concentrated on a very sparse subset of vertices.
%For example, some weights can be zero, but we want to avoid cases where there are too many of them.
%the density of weights equal to zero tends to~1.
For comparison, if $w_n$ was regular in $n$, that is $\frac{w_n}{W_n} \approx \frac{\diff}{\diff n} a_n \approx n^{-1} (\log n)^{-\alpha} L(\log n)$, then we would have
$\sum_{i\geq n} w_i^2/W_i^2 = O(n^{-1} (\log n)^{-2\alpha} L^2(\log n))$.
\begin{theorem} \label{thm:powers_of_log}
	Assume \eqref{eq:assumption_a_n_1} for $\alpha > 0$ and some function $L \colon [1,\infty) \to (0,\infty)$ 
	satisfying \hyperref[hyp_SV_k]{$(\mathrm{SV}_2)$}. 
	Also assume  \eqref{eq:ass_somme_carre}.
	Then, almost surely, as $n \to \infty$, we have
%	\begin{align*}
%	\haut(\ttT_n)
%	& = \frac{\log n}{\alpha \log \log n} 
%	\left( 1 
%	+ \frac{\log L(\log n) + \log \log \log n}{\alpha \log \log n - \log L(\log n) - \log \log \log n}
%	+ \frac{1+\alpha +\log \alpha}{\alpha \log \log n}
%	(1+o(1)) \right).
%	\end{align*}
%	%%
%	The last display can be expanded as 
	\begin{align*}
	\haut(\ttT_n)
	& = \frac{\log n}{\alpha \log \log n} 
	+ \frac{\log n}{(\alpha \log \log n)^2} 
	\Biggl( 
	\sum_{k\geq 0} \left( \frac{\log L(\log n)}{\alpha \log \log n} \right)^k
	[\log L(\log n) + (k+1) \log \log \log n]  \\
	& \hspace{10.9cm} {} + 1+\alpha +\log \alpha + o(1) \Biggr).
	\end{align*}
\end{theorem}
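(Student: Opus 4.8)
The plan is to deduce the expansion from the general bounds of Proposition~\ref{prop:upper_bound} and Proposition~\ref{prop:lower_bound}, and then to carry out a delicate asymptotic analysis of the implicit equation they produce; once that equation is in hand, the rest is (careful) real analysis of slowly varying functions.

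The probabilistic starting point is that, by Remark~\ref{rem:consequence_many-to-one}, the height of $\ttu_{m+1}$ is a sum of independent Bernoulli variables with mean $a_m$, so that $\haut(\ttT_n) = \max_{m\le n}\haut(\ttu_m)$ and each term has a Poissonian upper tail (here \eqref{eq:ass_somme_carre} is used, as it forces $w_i/W_i = O(i^{-1/2})\to 0$). Crucially---and this is the point at which the model leaves the branching-random-walk universality class of \cite{painsenizergues2022}---a plain union bound over $m\le n$ is \emph{not} tight here: the ancestral lines of distinct vertices are strongly correlated, and a deep vertex is built out of the ``recent'' weight-flux, of order $(\log n)^{1-\alpha}L(\log n)$ (comparable to $a_n-a_{\lfloor\sqrt n\rfloor}$, hence to $a_n$ when $\alpha<1$ and to $a_\infty-a_n$ when $\alpha>1$), rather than out of the whole of $a_n$, so that whenever $a_n$ converges the true height lies strictly below the first-moment prediction. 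Proposition~\ref{prop:upper_bound} and Proposition~\ref{prop:lower_bound} are exactly the tools that produce matching bounds incorporating this; I would verify their hypotheses from \eqref{eq:ass_somme_carre}, \eqref{eq:assumption_a_n_1}, and (through \eqref{eq:transfer assum Wn an}) the induced behavior of $W_n$, and thereby reduce the theorem to analyzing the value $h=h(n)$ characterized, up to corrections that eventually prove negligible, by
\[
\log n \;=\; h\,\bigl(\log h - 1 - \lambda_n\bigr), \qquad \lambda_n \;=\; (1-\alpha)\log\log n + \log L(\log n) + O(1),
\]
where $\lambda_n$ is the logarithm of the relevant weight-flux scale; pinning down the additive constant requires keeping the remaining $O(1)$-level and Stirling-type corrections to this equation exactly.

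Next I would solve it by bootstrapping. Writing $h=\log n/u$ with $u\to\infty$ turns the equation into the fixed point $u = \alpha\log\log n - \log L(\log n) - \log u + O(1)$, from which one first reads $u\sim\alpha\log\log n$, then substitutes $\log u = \log\alpha + \log\log\log n + o(1)$ to obtain $u = \alpha\log\log n - \log L(\log n) - \log\log\log n - (1+\log\alpha) + o(1)$, and finally expands
\[
h \;=\; \frac{\log n}{u} \;=\; \frac{\log n}{\alpha\log\log n}\cdot\frac{1}{\,1 - \bigl(\log L(\log n) + \log\log\log n + O(1)\bigr)/(\alpha\log\log n)\,}
\]
as a geometric series. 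Collecting in the resulting double sum the pure powers of $\log L(\log n)/(\alpha\log\log n)$ and the terms linear in $\log\log\log n$ yields precisely $\sum_{k\ge0}\bigl(\log L(\log n)/(\alpha\log\log n)\bigr)^k\bigl[\log L(\log n) + (k+1)\log\log\log n\bigr]$, while the constant-order coefficient---tracked through the $(\mathrm{SV}_2)$-controlled asymptotics relating $a_n$ to $\int_1^{\log n}x^{-\alpha}L$ (and, for $\alpha>1$, to its tail) and through the Stirling correction inside the implicit equation---works out to $1+\alpha+\log\alpha$. Two points deserve care: one uses throughout that $L$ slowly varying gives $\log L(\log n)=o(\log\log n)$ (Lemma~\ref{lem:slowly_varying}), and the full series must be retained because $\log L(\log n)$ may nevertheless be as large as $\sqrt{\log\log n}$, so that $\bigl(\log L(\log n)/(\alpha\log\log n)\bigr)^k\log\log\log n$ need not be $o(1)$ for any fixed $k$. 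Having matched the upper and lower bounds to within $o\!\left(\log n/(\alpha\log\log n)^2\right)$ of $h(n)$, the convergence follows; the almost-sure statement is then obtained from the monotonicity of $n\mapsto\haut(\ttT_n)$ together with a Borel--Cantelli argument along a rapidly growing subsequence, since $h(n)$ varies slowly enough that interpolating between consecutive terms costs only $o\!\left(\log n/(\alpha\log\log n)^2\right)$.

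I expect the lower bound to be the main obstacle. Unlike the branching-random-walk regime, there is no slack for a lossy first- or second-moment argument: matching the upper bound to within $o\!\left(\log n/(\alpha\log\log n)^2\right)$ forces one to account for the tree correlations essentially exactly. Concretely, Proposition~\ref{prop:lower_bound} must be fed a decomposition of $\intervalleentier{1}{n}$ into blocks tuned finely enough that the dependence between blocks---controlled solely by \eqref{eq:ass_somme_carre}---contributes strictly below the target scale, while each block still carries enough weight-flux to advance the current deepest vertex; checking that the propositions apply and that their conclusion genuinely takes the form of the implicit equation above is where most of the difficulty lies, the subsequent expansion being lengthy but routine.
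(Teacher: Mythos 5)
Your proposal follows the same route as the paper: apply Propositions~\ref{prop:upper_bound} and~\ref{prop:lower_bound} to a time-change $(i_t)$ and a bias $(\theta_t)$, then invert the implicit relation $\log n \approx \log i_{t_n}$ to read off the expansion of $t_n$. The bootstrap you describe for $\log n = h(\log h - 1 - \lambda_n)$ is, in substance, the inversion hidden behind the paper's unexplained ``One can check that $t_n = \cdots$'', and your observation that the full geometric series must be retained (since $\log L(\log n)$ need not vanish relative to fractional powers of $\log\log n$) is correct.

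The genuine gap is exactly where you concede it lies: the ``$+\,O(1)$'' in $\lambda_n$. That term is not innocuous, because the constant $1+\alpha+\log\alpha$ lives there, and it is \emph{not} what a naive Stirling/large-deviation computation delivers. If one drops the barrier and writes the analogous implicit equation for the maximum of $n$ i.i.d.\ copies of $\sum_i B_i$, the same bootstrap gives the strictly larger constant $1+\log\alpha-\log(1-\alpha)$ (Section~\ref{sec:crude upper bound}, Remark~\ref{rem:comparison_iid}); the difference is entirely the effect of the barrier in $Q_n$ from \eqref{eq:def_Q_n}. Extracting the right constant requires choosing $(i_t)$ and $(\theta_t)$ explicitly so that the change-of-measure prefactor of Lemma~\ref{lem:bound first moment Q_n} can be estimated to precision $o(t)$. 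This is what the explicit $\ell_t$ of \eqref{eq:def_ell_1}, the expansion $\theta_t = -\log(a_{i_t}-a_{i_{t-1}})$ in Lemma~\ref{lem:theta_t_1}---where \hyperref[hyp_SV_k]{$(\mathrm{SV}_2)$} is needed to push \eqref{eq:expansion_diff_a_i_t_1} to $(1+\alpha/2t + o(1/t))$ precision and thereby prove $(\theta_t)$ is eventually non-decreasing---and the cancellation in Lemma~\ref{lem:estimation_key_quantity_1} accomplish. Your proposal asserts the constant rather than deriving it, and this derivation is the technical core rather than ``lengthy but routine.'' Finally, your intuition that the lower bound allows no slack is miscalibrated: Proposition~\ref{prop:lower_bound} grants $T(n)=(\log n)^{4/5}$ of room, which dwarfs the $O(1)$ scale while still being $o(\log n/(\log\log n)^2)$; once $(i_t,\theta_t)$ are fixed, bounding $\delta_1,\delta_2,E(n)$ via Lemma~\ref{lem:E(n)} and \eqref{eq:ass_somme_carre} is mechanical.
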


\begin{remark} 
	Keeping only the term $k=0$ in the expansion of $\haut(\ttT_n)$ in Theorem~\ref{thm:powers_of_log} above, we get
	\begin{align*}
	\haut(\ttT_n)
	& = \frac{\log n}{\alpha \log \log n} 
	+ \frac{\log n}{(\alpha \log \log n)^2} 
	\biggl( \log L(\log n) + \log \log \log n + O \left( \frac{(\log L(\log n))^2}{\log \log n} \right) \\
	& \hspace{10.2cm} {} + 1+\alpha +\log \alpha + o(1) \biggr).
	\end{align*}
	The second order term for $\haut(\ttT_n)$ can be the one involving $\log L(\log n)$ or the one involving $\log \log \log n$ (or none of those two if for example $L(x) = (\log \log x)^{-1}$). 
	%Moreover, the $O(\dots)$ term cannot be included in the $o(1)$ term. For example, if $L(x) = \exp(\log^\gamma x)$ for $\gamma \in (0,1)$, then Theorem \ref{thm:powers_of_log} indentifies a number of 
	Depending on $L$, the terms in the series in the expansion of $\haut(\ttT_n)$ can be included in the $o(1)$ for $k$ large enough, but not necessarily (take for example $L(x) = \exp(\frac{\log x}{\log \log \log \log x})$).
\end{remark}

\begin{remark} \label{rem:comparison_iid}
	Recall $\haut(\ttu_{n+1})$ has the same distribution as $\sum_{i=1}^{n} B_i$, where the $B_i$'s are independent Bernoulli r.v.\@ with parameter $w_i/W_i$ (see Remark \ref{rem:consequence_many-to-one}).
	In order to see the influence of the correlations due to the tree structure,
	it is meaningful to compare the height of $\ttT_n$ with the maximum of $n$ independent variables with the same distribution as $\sum_{i=1}^{n} B_i$%
	\footnote{It can seem crude to take $n$ copies of $\sum_{i=1}^{n} B_i$ instead of independent variables $Z_1,\dots,Z_n$ where $Z_k \overset{\text{(d)}}{=} \sum_{i=1}^{k} B_i$. But note that $\max_{1 \leq k \leq n} Z_k$ is stochastically larger than the maximum of $n/2$ independent copies of $\sum_{i=1}^{n/2} B_i$ and replacing $n$ by $n/2$ in all expansions considered does not change anything.\label{footnote:crude}}.
	In Section~\ref{sec:crude upper bound}, we explain why this maximum always provides an upper bound for $\haut(\ttT_n)$, we compare the expansion of this upper bound with that of the actual behavior of $\haut(\ttT_n)$ and identify the first term for which they differ. 
%	This indicates the influence of the correlations due to the tree structure.
	In the framework of Theorem~\ref{thm:powers_of_log}, an interesting transition occurs:
	\begin{itemize}
		\item If $\alpha < 1$, the first terms in the expansion in the i.i.d.\@ case are the same as for $\haut(\ttT_n)$ up to the term of order $\log n/(\log \log n)^2$ for which the coefficient differs.
		\item If $\alpha = 1$, the first term is still the same but the difference between the expansions grows faster than $\log n/(\log \log n)^2$.
		\item If $\alpha > 1$, the first term differs by a multiplicative constant.
	\end{itemize}
	We decided to stop the expansion at the order $\log n/(\log \log n)^2$, because it was enough to see the difference with the case of i.i.d.\@ variables in all cases.
	However, our method (and in particular Proposition \ref{prop:upper_bound} and \ref{prop:lower_bound}) could give the expansion up to the order $\log n/(\log \log n)^k$ for any $k \geq 1$, if we assume a small enough error term in \eqref{eq:assumption_a_n_1} and that $L$ is sufficiently smooth (see Section \ref{sec:strategy_main_result} for details).
\end{remark}

\paragraph{Examples.} Let us apply the above result to specific sequences of weights.
The computations needed to ensure that those weight sequences indeed satisfy the assumptions of Theorem~\ref{thm:powers_of_log} are available in Section~\ref{sec:application} of the appendix.
\begin{itemize}
	\item \textbf{ For $w_n=\frac{\lambda (1-\alpha)}{n(\log n)^\alpha}\cdot \exp(\lambda (\log n)^{1-\alpha})$ with $\alpha \in \intervalleoo{0}{1}$ and $\lambda >0$}, we can check that \eqref{eq:ass_somme_carre} is satisfied and that \eqref{eq:assumption_a_n_1} holds with a function $L$ such that $L(x)=\lambda (1-\alpha)$ for $x$ large enough. Applying Theorem~\ref{thm:powers_of_log} then yields
	 \begin{align*}
	\haut(\ttT_n)
	& = \frac{\log n}{\alpha \log \log n} 
	+ \frac{\log n}{(\alpha \log \log n)^2} 
	\left(\log \log \log n  + 1+\alpha +\log (\lambda \alpha(1-\alpha)) + o(1) \right).
	%+ o \left( \frac{\log n}{(\log \log n)^2} \right).
	\end{align*}
	This example of behavior for the weight sequence is similar to that of the (random) sequences appearing in \cite{mailleruribebravo2019,bocimailler2021} that are constructed using ``memory kernel'' $\mu_1$. See  
	\cite[Section~1.1]{mailleruribebravo2019} for more details.
	\item \textbf{For $w_n=\frac{1}{n}$}, we can check that \eqref{eq:ass_somme_carre} is satisfied and \eqref{eq:assumption_a_n_1} holds with a function $L$ such that $L(x)=1+o(1)$ as $x\rightarrow \infty$. Applying Theorem~\ref{thm:powers_of_log} yields 
	\begin{align*}
	\haut(\ttT_n)
	& = \frac{\log n}{\log \log n} 
	+ \frac{\log n}{(\log \log n)^2} 
	\left(\log \log \log n + 2  + o(1) \right).
	% + o \left( \frac{\log n}{(\log \log n)^2} \right).
	\end{align*}
	\item \textbf{For $w_n=\frac{1}{n(\log n)^\alpha}$ with $\alpha>1$}, we can check that \eqref{eq:ass_somme_carre} is satisfied and \eqref{eq:assumption_a_n_1} holds with a function $L$ such that $L(x)=\frac{1}{W_\infty}+o(1)$ as $x\rightarrow \infty$, where $W_\infty \coloneqq \lim_{n\rightarrow\infty}W_n$.
	Using Theorem~\ref{thm:powers_of_log} we get
	\begin{align*}
	\haut(\ttT_n)
	& = \frac{\log n}{\alpha \log \log n} 
	+ \frac{\log n}{(\alpha \log \log n)^2} 
	\left(\log \log \log n + 1+\alpha +\log \alpha- \log W_\infty  + o(1) \right). 
	%+ o \left( \frac{\log n}{(\log \log n)^2} \right).
	\end{align*}
	Remark that in this case, changing the value of a finite number of weights in the sequence would affect the asymptotic behavior of the height at the third order because of the presence of the term containing $W_\infty$.  
\end{itemize}
The sequence of weights could also be chosen randomly. For example, let $w_n \coloneqq X_n \widetilde{w}_n$, where $(X_n)_{n\geq 1}$ is a sequence of i.i.d.\@ non-negative random variables and $(\widetilde{w}_n)_{n\geq 1}$ is one of the sequences in the examples above. Then, if $\E[X_1^2]< \infty$, one can check that almost surely \eqref{eq:assumption_a_n_1} and \eqref{eq:ass_somme_carre} are satisfied with the same function $L$ as above and therefore the conclusion on $\haut(\ttT_n)$ still holds a.s. 
In particular, in the third example above, the third order term containing $\log W_\infty$ becomes random.

%%%%%%%%%%%%%%%%%%%%%%%%%%%%%%%%%%%%%%%%%%%%%%%%%%%%
\subsection{Quickly converging variance} \label{sec:quickly converging intro}
%%%%%%%%%%%%%%%%%%%%%%%%%%%%%%%%%%%%%%%%%%%%%%%%%%%%

Here, we assume that $a_n$ converges to some finite limit $a_\infty$ and that
%\begin{equation} \label{eq:assumption_a_n_2}
%a_n = \int_1^{\exp(\log^\beta n)} x^{-\alpha} L(x) \diff x 
%+ \petito{\exp \left( -(\alpha-1) \log^\beta n \right) 
%	\cdot L\left( \exp \left( \log^\beta n \right) \right) 
%	\cdot (\log n)^{(4\beta-2)\wedge 0}},
%\end{equation}
\begin{equation} \label{eq:assumption_a_n_2}
a_\infty - a_n = \exp \left( -(\alpha-1) \log^\beta n \right) 
	\cdot J\left( \exp \left( \log^\beta n \right) \right) 
	\cdot \left( 1 + \petito{(\log n)^{(4\beta-2)\wedge 0}} \right),
\end{equation}
where $\alpha > 1$, $\beta > 0$ and $J$ is a positive function on $[1,\infty)$ satisfying \hyperref[hyp_SV_k]{$(\mathrm{SV}_2)$} if $\beta <\frac{1}{2}$ and \ref{hyp_SV_0} otherwise.
% \ref{hyp_SV_k} for some $k$ depending on $\beta$.
%In particular, this implies $a_n \sim \exp( -(\alpha-1) \log^\beta n) \cdot L(\exp(\log^\beta n))$.
%Note that the last factor in the $o(\dots)$ term plays a role only if $\beta < 1/2$.
%%
The assumption required for $\sum_{i=n}^{\infty} w_i^2/W_i^2$ depends on $\beta$. When $\beta < 1$ we use the same assumption \eqref{eq:ass_somme_carre} as in the previous cases.
If $\beta = 1$, we will further assume
\begin{equation} \label{eq:ass_somme_carre_beta=1}
	\forall \varepsilon>0: \quad
	\sum_{i\geq n} \frac{w_i^2}{W_i^2} 
	= O \left( \frac{1}{n^{2\alpha-1-\varepsilon}} \right).
\end{equation}
Compared to \eqref{eq:ass_somme_carre}, this last display assumes a faster convergence to $0$ for $\sum_{i\geq n} w_i^2/W_i^2$, but note that in this regime weights are also decreasing faster: if $w_n$ was regular in $n$, we would have
$\sum_{i\geq n} w_i^2/W_i^2 = O(n^{1-2\alpha} J^2(n))$.
Finally, if $\beta >1$ we assume
\begin{equation} \label{eq:ass_somme_carre_beta>1}
	\exists \varepsilon>0: \quad 
	\sum_{i\geq n} \frac{w_i^2}{W_i^2} 
	= O \left( \frac{1}{n^\varepsilon} \exp \left( -2(\alpha-1)\log^\beta n \right) 
	\cdot J^2\left( \exp \left(\log^\beta n \right) \right) \right).
\end{equation}
Here, if $w_n$ was regular in $n$, we would have
a similar bound with $1/n^\varepsilon$ replaced by $(\log n)^{\beta-1}/n$.
Also note that, for some $J$ slowly varying, $J(\exp(\log^\beta n))$ can grow faster than any polynomial.
\begin{theorem} \label{thm:quickly}
	Assume that \eqref{eq:assumption_a_n_2} holds for some $\beta >0$, some $\alpha > 1$
	 and some function $J \colon [1,\infty) \to (0,\infty)$ satisfying \hyperref[hyp_SV_k]{$(\mathrm{SV}_2)$} if $\beta <\frac{1}{2}$ and \ref{hyp_SV_0} otherwise.
	Then, almost surely, as $n \to \infty$,
	\begin{enumerate}
		\item\label{it:asymptotic height beta in (0,1)} if $ \beta \in (0,1)$ and  \eqref{eq:ass_somme_carre} holds, we have
		\[
			\haut(\ttT_n) \sim 	\frac{(\log n)^{1-\beta}}{(\alpha -1)(1-\beta)};
		\]
	    \item\label{it:asymptotic height beta=1} if $ \beta=1$ and \eqref{eq:ass_somme_carre_beta=1} holds, we have
		\[
		\haut(\ttT_n) \sim\frac{\log \log n}{\log \alpha};
		\]
		\item\label{it:asymptotic height beta>1} if $\beta\in \intervalleoo{1}{\infty}$ and \eqref{eq:ass_somme_carre_beta>1} holds, we have
		\[
		\haut(\ttT_n) \sim \frac{\log \log \log n}{\log \beta}.
		\]
	\end{enumerate}
%	\[
%	\haut(\ttT_n) \sim 
%	\begin{cases}
%	\frac{(\log n)^{1-\beta}}{(\alpha -1)(1-\beta)}, & \text{if } \beta \in (0,1), \\
%	\frac{\log \log n}{\log \alpha} & \text{if } \beta=1,\\
%	\frac{\log \log \log n}{\log \beta} & \text{if } \beta\in \intervalleoo{1}{\infty}.
%	\end{cases}
%	\]
\end{theorem}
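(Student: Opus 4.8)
\emph{Overall strategy.} Both bounds on $\haut(\ttT_n)$ will be deduced from the general criteria of Proposition~\ref{prop:upper_bound} and Proposition~\ref{prop:lower_bound}, so that the problem reduces to an asymptotic analysis, in each of the three regimes, of the sequence of hitting times
\[
T_h \coloneqq \min\{k \geq 1 : \haut(\ttu_k) = h\}, \qquad h \geq 0,
\]
that is, the label of the first vertex at depth $h$, since $\haut(\ttT_n) = \max\{h : T_h \leq n\}$ and $T_0 = 1$. The preliminary step is to rewrite the hypothesis \eqref{eq:assumption_a_n_2}, together with the relevant control on $\sum_{i\geq n} w_i^2/W_i^2$ (namely \eqref{eq:ass_somme_carre}, \eqref{eq:ass_somme_carre_beta=1} or \eqref{eq:ass_somme_carre_beta>1} according to $\beta$), as precise asymptotics for $a_\infty - a_n$ and, through \eqref{eq:transfer assum Wn an}, for $\log(W_\infty/W_n)$. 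In each regime one checks that both $w_n/W_n$ and $\sum_{i>n} w_i^2/W_i^2$ are $\petito{a_\infty - a_n}$; this is exactly why the assumption on $\sum_{i\geq n} w_i^2/W_i^2$ has to be strengthened as $\beta$ grows, since $a_\infty - a_n$ decays faster then.

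\emph{Lower bound on the height.} For this one uses only the first vertex at depth $h-1$, namely $\ttu_{T_{h-1}}$, which carries weight $w_{T_{h-1}}$: for every $i > T_{h-1}$ the vertex $\ttu_i$ attaches to $\ttu_{T_{h-1}}$ with probability $w_{T_{h-1}}/W_{i-1} \geq w_{T_{h-1}}/W_\infty$, so $T_h - T_{h-1}$ is stochastically dominated by a geometric variable of parameter $w_{T_{h-1}}/W_\infty$. Hence $T_h = (W_\infty/w_{T_{h-1}})(1 + O_\P(1))$, and since $w_{T_{h-1}}$ can be estimated from \eqref{eq:assumption_a_n_2} by differentiating the expression for $a_\infty - a_n$ (this is where the regularity of $J$ is used), taking logarithms yields the recursion
\[
\log T_h = \log T_{h-1} + (\alpha - 1)(\log T_{h-1})^\beta \, (1 + \petito{1}).
\]
Summing this recursion (a discrete analogue of $\dot{x} = (\alpha-1)x^\beta$) and inverting the relation $T_h = n$ gives, in each regime, the claimed first-order asymptotics; the $O_\P(1)$ fluctuations over the $h$ steps accumulate to $O_\P(h) = \petito{\log T_h}$ and are absorbed, while a Borel--Cantelli argument upgrades convergence in probability to the almost sure statement.

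\emph{Upper bound on the height.} The naive first moment bound only yields $\haut(\ttT_n) = \grandO{\log n/\log\log n}$, which is far too large, because the expected number of depth-$h$ vertices is enormous but dominated by rare, highly clustered configurations. The key observation is that, conditionally on $\mathcal{F}_{T_{h-1}}$, every vertex at depth $h-1$ other than $\ttu_{T_{h-1}}$ has label $> T_{h-1}$, so the total weight carried by depth-$(h-1)$ vertices in $\ttT_i$ is at most $w_{T_{h-1}} + (W_\infty - W_{T_{h-1}}) = W_\infty - W_{T_{h-1}-1}$, which by \eqref{eq:transfer assum Wn an} is of order $W_\infty(a_\infty - a_{T_{h-1}})$. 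A \emph{conditional} first moment estimate then gives
\[
\Ppsq{T_h \leq n}{\mathcal{F}_{T_{h-1}}} \leq n\,\frac{W_\infty - W_{T_{h-1}-1}}{W_{T_{h-1}}} \leq C\, n\, (a_\infty - a_{T_{h-1}}).
\]
Splitting according to whether $T_{h-1} \leq M$ or $T_{h-1} > M$ for a well-chosen threshold $M = M(n)$ produces the recursion $\Pp{T_h \leq n} \leq C n\,\Pp{T_{h-1} \leq M} + C n\,(a_\infty - a_M)$; optimising over $M$ and iterating over $h$ gives a lower bound on the typical value of $T_h$ that matches the upper bound from the previous step, and hence the almost sure upper bound on $\haut(\ttT_n)$.

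\emph{Conclusion and main difficulty.} It then remains to solve the recursion $y_h = y_{h-1} + (\alpha-1)y_{h-1}^\beta(1+\petito{1})$ with $y_h = \log T_h$: for $\beta \in (0,1)$ one gets $y_h \sim ((\alpha-1)(1-\beta)h)^{1/(1-\beta)}$, so $T_h = n$ gives $h \sim (\log n)^{1-\beta}/((\alpha-1)(1-\beta))$; for $\beta = 1$ one gets $y_h \sim c\,\alpha^h$, so $h \sim \log\log n/\log\alpha$; for $\beta > 1$ one gets $\log y_h \sim c\,\beta^h$, so $h \sim \log\log\log n/\log\beta$. The main obstacle is the upper bound: beyond spotting the cancellation coming from $W_\infty - W_{T_{h-1}}$, one must close the induction quantitatively, which requires understanding the shape of $n\mapsto \Pp{T_{h-1}\leq n}$ in its transition window and choosing $M(n)$ accordingly, and checking that the error term $\petito{(\log n)^{(4\beta-2)\wedge 0}}$ in \eqref{eq:assumption_a_n_2} together with the regularity of $J$ (which needs to satisfy \hyperref[hyp_SV_k]{$(\mathrm{SV}_2)$} only when $\beta<\tfrac12$, reflecting that the derivative corrections to $a_\infty - a_n$ become delicate there) does not perturb the leading order; the boundary cases $\beta = \tfrac12$ and $\beta = 1$ require the most care.
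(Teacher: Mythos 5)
Your proposal opens by invoking Proposition~\ref{prop:upper_bound} and Proposition~\ref{prop:lower_bound}, but the argument that follows bypasses them entirely in favour of a direct hitting-time recursion on $T_h$; the paper's proof does go through those propositions, choosing in each regime a time change $i_t$ and a tilt $\theta_t=-\log(a_{i_t}-a_{i_{t-1}})$ and then checking the summability criteria case by case. Your ODE-style bookkeeping, solving $y_h=y_{h-1}+(\alpha-1)y_{h-1}^\beta(1+\petito{1})$, correctly reproduces the paper's choice of $i_t$, but the probabilistic estimates supporting it do not hold up. The lower bound you sketch is the crude argument of Section~\ref{sec:crude lower bound}, which the appendix presents explicitly under the additional hypothesis that $(w_i)_{i\geq 1}$ is non-increasing, precisely so that $w_{T_{h-1}}$ can be bounded below. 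Claiming that ``$w_{T_{h-1}}$ can be estimated from \eqref{eq:assumption_a_n_2} by differentiating the expression for $a_\infty-a_n$'' is not a valid step: \eqref{eq:assumption_a_n_2} constrains only the partial sums $a_n=\sum_{i\leq n}w_i/W_i$, and the conditions \eqref{eq:ass_somme_carre}/\eqref{eq:ass_somme_carre_beta=1}/\eqref{eq:ass_somme_carre_beta>1} only bound $\sum_{i\geq n}(w_i/W_i)^2$ from above; neither gives a pointwise lower bound on an individual $w_n/W_n$. The paper's actual lower bound sidesteps this by running a many-to-two second-moment argument with a barrier on the auxiliary trees $\ttT_n^{(N)}$.

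The more serious issue is that your upper bound does not close quantitatively. Bounding the weight carried by depth-$(h-1)$ vertices by $W_\infty - W_{T_{h-1}-1}$ treats every vertex of label $>T_{h-1}$ as if it were at depth $h-1$, whereas almost all of that mass sits at much shallower depth, and the overcount is not by a constant but by an exponentially large factor. Take $\beta=1$ with $w_n=n^{-\alpha}$: then $a_\infty-a_n\asymp n^{1-\alpha}$ and the correct scale is $T_h\approx\exp(\kappa^h)$ with $\kappa$ close to $\alpha$. With $n\approx\exp(\kappa^h)$ and a threshold $M\approx\exp(\kappa^{h-1})$, the key quantity $n\,(a_\infty-a_M)$ is of order $\exp(\kappa^{h-1}(\kappa-\alpha+1))$, which tends to infinity (not zero) for every $\kappa>\alpha-1$. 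Optimising the recursion $\Pp{T_h\leq n}\leq\Pp{T_{h-1}\leq M}+Cn\,(a_\infty-a_M)$ therefore forces $\kappa<\alpha-1$, giving at best $\haut(\ttT_n)\lesssim\log\log n/\log(\alpha-1)$ when $\alpha>2$ and nothing at all when $\alpha\in(1,2]$, since the auxiliary thresholds grow and never reach a base case. The missing ingredient is a barrier imposed across \emph{all} intermediate generations, not just the last one: the tilted many-to-one estimate with the barrier at every $i_r$ (Lemma~\ref{lem:bound first moment Q_n} plugged into Proposition~\ref{prop:upper_bound}) captures cancellations that a single-level conditional first moment cannot see, and it is exactly that which produces the exponent $\alpha$ rather than $\alpha-1$.
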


\begin{remark} \label{rem:comparison_iid_2}
	As in Remark~\ref{rem:comparison_iid}, we can compare $\haut(\ttT_n)$ with the maximum of $n$ independent copies of $\sum_{i=1}^{n-1} B_i$.
	In the framework of Theorem~\ref{thm:quickly}, $\haut(\ttT_n)$ is always much smaller than this maximum a.s., see Section~\ref{sec:crude upper bound}.
\end{remark}

\begin{remark}
We see that in the setting of Theorem~\ref{thm:quickly}, the faster the weight sequence converges, the slower the height of the tree grows to infinity.
Remark that unless the weight sequence is such that $w_i=0$ for all $i$ large enough, the height of $\ttT_n$ almost surely tends to infinity with $n$, irrespective of the behavior of $a_n$.
\end{remark}

\paragraph{Example.} Consider the weight sequence $w_n=n^{-\alpha}$ for $\alpha>1$. 
Then we can apply Theorem~\ref{thm:quickly} with $\beta=1$, see Section~\ref{sec:application} in the appendix where we check that the assumptions are satisfied, to get that almost surely as $n\rightarrow\infty$ we have
\[
\haut(\ttT_n) \sim\frac{\log \log n}{\log \alpha}.
\]

%%%%%%%%%%%%%%%%%%%%%%%%%%%%%%%%%%%%%%%%%%%%%%%%%%%%
\subsection{Link with time-inhomogeneous branching random walk}
%%%%%%%%%%%%%%%%%%%%%%%%%%%%%%%%%%%%%%%%%%%%%%%%%%%%
\label{subsec:BRW}

As mentioned at the beginning of the introduction, the WRT can be compared to a BRW and we make here this analogy more precise.
First note that, as explained in \cite[Section 1.4]{painsenizergues2022}, the WRT can be linked rigorously to a particular BRW with types, which does not satisfy the branching property and therefore is not tractable (or at least not easier to study than the WRT itself).
Hence, we believe it is not a good strategy to try to prove results on the WRT through a direct comparison with a BRW.
The goal here is different: we present a \textit{non-rigorous} analogy, but which provided useful insights for the proofs.

A branching random walk on the real line is a discrete-time process defined as follows. Initially there is one individual at position $0$, which forms the $0$th generation. 
For any $t \in \N$, each individual of the $t$th generation has a random number of children who jump independently of each other from their parent's location according to some fixed jump distribution and these children form the $(t+1)$th generation.
In the following, we also consider the case of time-inhomogeneous BRW, where the jump distribution can depend on the generation number (but not the reproduction law).

%Consider for simplicity the case where every particle has exactly two children, so that the number of particles in generation $t$ is deterministically $2^t$. 

We denote by $V_t$ the set of particles in generation $t$ and for a particle $u\in V_t$ we denote $X_u$ its position.
We introduce the measure
\[
	\mu_t^{\mathrm{BRW}} \coloneqq \frac{1}{\abs{V_t}}\sum_{u\in V_t} \delta_{X_u},
\]
the empirical measure of the displacement of particles in generation $t$, so that in particular, the maximal displacement at time $t$ corresponds to the supremum of the support of $\mu_t^{\mathrm{BRW}}$. 
A property that will be important in the comparison with the WRT is the following: Sample a random variable $Z_{t+1}$ with distribution $\mu_{t+1}^{\mathrm{BRW}}$. 
Then, conditional on $\mu_t^{\mathrm{BRW}}$, the conditional distribution of $Z_{t+1}$ can be described by the following equality in law
\begin{equation}\label{eq:equality in law random point BRW}
	(Z_{t+1}\ \vert \ \mu_t^{\mathrm{BRW}} ) \overset{(d)}{=} Z_t + \Delta_{t+1},
\end{equation} 
where the two terms on the right-hand-side are independent and $Z_t\sim \mu_t^{\mathrm{BRW}}$ and $\Delta_{t+1}$ has the jump distribution corresponding to the transition from time $t$ to $t+1$.

Now, for a given $n$, we would like to compare the height of the tree $\ttT_n$ with the maximal displacement at time $t$ in a BRW, with mean number of children $e$, say. 
With this choice, the total number of particles at time $t$ in the BRW behaves roughly as $e^t$, so it is natural to restrict ourselves to times of the form $n= \lfloor e^t \rfloor$ (in what follows we omit the floor notation so as to keep expressions simple).
Now, we consider the measure 
\[
	\mu_t^{\mathrm{WRT}}=\frac{1}{W_{e^t}}\sum_{i=1}^{e^t}w_i \delta_{\haut(u_i)}
\]
which is the (weighted) empirical distribution of height in the tree $\ttT_{e^t}$.
If we sample a random variable $H_{t+1}$ with distribution $\mu_{t+1}^{\mathrm{WRT}}$ then we obtain the following equality in distribution\footnote{The term involving a sum of independent Bernoulli r.v. arises from applying the many-to-one formula (Lemma~\ref{lem:many_to_one}) to the tree $\ttT_{e^{t+1}}^{(e^t)}$ where the $e^t$ first vertices are merged together (see definition in Section~\ref{sec:some_definitions}), conditionally on the tree $\ttT_{e^t}$. \vspace{-0.05cm}
The fact that the first term has distribution $\mu_t^{\mathrm{WRT}}$ is obtained by considering the distribution of $\ttT_{e^{t+1}}$ conditional on $\ttT_{e^{t}}$ and $\ttT_{e^{t+1}}^{(e^t)}$.}
\begin{equation}\label{eq:equality in law random point WRT}
(H_{t+1}\ \vert \ \mu_t^{\mathrm{WRT}} ) \overset{(d)}{=} H_t + \sum_{i=e^t+1}^{e^{t+1}} B_i,
\end{equation} 
where all the terms on the right-hand-side are independent with $H_t\sim \mu_t^{\mathrm{WRT}}$ and the $B_i$'s have respective distribution $\mathrm{Bernoulli}\left(w_i/W_i\right)$.
From the similarity between \eqref{eq:equality in law random point BRW} and \eqref{eq:equality in law random point WRT} it is natural for our comparison to fix the jump distribution between time $t$ and time $t+1$ to be close to that of 
\begin{equation} \label{eq:approximate_jumps_BRW}
\sum_{i=e^t+1}^{e^{t+1}} B_i
\overset{(d)}{\simeq} \mathrm{Poisson}\left( \sum_{i=e^t+1}^{e^{t+1}} \frac{w_i}{W_i} \right)
= \mathrm{Poisson}\left( a_{e^{t+1}} - a_{e^t} \right),
\end{equation}
at least for $t$ large enough so that the Poisson approximation can be justified.

To summarize the analogy: 
the (weighted) empirical distribution of heights for vertices in $\ttT_n$ can be compared with empirical distribution of  the positions of particles at time $t = \log n$ in a BRW with mean number of children $e$ which jumps between time $s$ and $s+1$ with distribution $\mathrm{Poisson}(a_{e^{s+1}}- a_{e^s})$.
%\footnote{This analogy is based only on a many-to-one formula (Lemma~\ref{lem:many_to_one}) which does not describe the correlations between the heights of two vertices. 
%These correlations can be described via a many-to-two formula (Lemma~\ref{lem:many-to-two}). 
%A careful comparison of this formula for the WRT and for the BRW reveals that  the analogy is correct when $W_n$ grows polynomially but is less accurate when weights are decreasing faster.
%It turns out that the analogy is less accurate when weights are decreasing faster (see Remark \ref{rem:analogy}). 
%However, it seems from our calculations on the corresponding BRWs that the analogy is precise enough to predict all the expansions in Theorem~\ref{thm:powers_of_log} and Theorem~\ref{thm:quickly}.}.
In particular, the height of $\ttT_n$ should behave like the maximal displacement in the BRW.
One can expect this comparison to be more precise than the one with independent copies of the $\sum_{i=1}^n B_i$, because it relies not only on a many-to-one formula, but also on the hierarchical construction of the model.
Indeed, it is precise enough to predict all the expansions in Theorem~\ref{thm:powers_of_log} and Theorem~\ref{thm:quickly}.

%The WRT at time $n$ can be compared with a BRW at time $t = \log n$, where the mean number of children equals $e$, so that the number of individuals is roughly $n$ in both models.
%Then, recall that the height of vertex $\ttu_n$ has the same distribution as $\sum_{i=1}^{n-1} B_i$, where the $B_i$'s are independent Bernoulli r.v.\@ with parameter $w_i/W_i$.
%Hence, jumps in the BRW between time $s$ and $s+1$ should be chosen with the same distribution as 
%\begin{equation} \label{eq:approximate_jumps_BRW}
%\sum_{i=e^s}^{e^{s+1}-1} B_i
%\overset{(d)}{\simeq} \mathrm{Poisson}\left( \sum_{i=e^s}^{e^{s+1}-1} \frac{w_i}{W_i} \right)
%= \mathrm{Poisson}\left( a_{e^{s+1}-1} - a_{e^{s}-1} \right),
%\end{equation}
%at least for $s$ large enough so that the Poisson approximation can be justified.

In the case treated in \cite{painsenizergues2022} where $W_n$ grows polynomially, we have $a_n \simeq \gamma \log n$ and therefore the jumps approximately have distribution Poisson($\gamma$), which does not depend on $s$ in this case.
This is a realistic approximation: the asymptotic expansion of $\haut(\ttT_n)$ obtained in \cite{painsenizergues2022}, see \eqref{eq:height_polynomial_case}, is exactly the same as the one of the maximal position at time $t = \log n$ in a binary BRW with jump distribution Poisson($\gamma$).
%However, this analogy is only heuristic and no such direct comparison seems to be useful for a proof, see \cite[Section 1.4]{painsenizergues2022} for further discussion.
%Moreover, this comparison is only based on a many-to-one formula (see Lemma \ref{lem:many_to_one}) which does not describe the correlations between the height of two vertices. These correlations can be described via a many-to-two formula (see Lemma \ref{lem:many-to-two}) and support the analogy.

In the framework of Section~\ref{sec:powers_of_log_intro} with $L=1$, it follows from \eqref{eq:approximate_jumps_BRW} that jumps at time $s$ in the BRW have approximately a Poisson$(s^{-\alpha})$ distribution, so the BRW becomes time-inhomogenous with decreasing variance.
To our knowledge, this time-inhomogeneous BRW does not fit into the framework studied so far in the literature, that we review briefly below.

A model of time-inhomogeneous BRW has been introduced by Bovier and Kurkova \cite{bovierkurkova2004} and is defined as follows. Fix a profile of variance $\sigma^2 \colon [0,1] \to (0,\infty)$. Then, for each given horizon $t$, consider the binary BRW where jumps at time $s$ have distribution $\mathcal{N}(0,\sigma^2(s/t))$.
They showed that the first order for the maximal position at time $t$ is linear in $t$ with an explicit speed in terms of $\sigma^2$. 
In particular, if $\sigma^2$ is decreasing, the speed %equals $\int_0^1 \sigma(u) \diff u$ and 
is strictly smaller than the one of the maximum of $2^t$ independent random walks of length $t$ with jumps $\mathcal{N}(0,\sigma^2(s/t))$ at time $s$.
This is therefore called the strongly correlated case, since the branching structure has an effect even on the first order of the maximum.
In comparison, if $\sigma^2$ is non-decreasing, the speed is the same as for independent random walk (this is the weakly correlated case).

In the case of a decreasing variance profile, the next terms in the expansion of the maximum have been studied in
\cite{fangzeitouni2012b,maillardzeitouni2016} in a continuous-time setting and in \cite{mallein2015} for non-Gaussian jumps and also time-inhomogeneous reproduction law.
Fang and Zeitouni \cite{fangzeitouni2012b} showed that the second term is negative and of order $t^{1/3}$ and Mallein \cite{mallein2015} identified it precisely. This is much larger than the logarithmic correction appearing for the maximum of classical BRW.
Maillard and Zeitouni \cite{maillardzeitouni2016} proved tightness of the maximum after centering by an additional logarithmic term.
Tightness around the median had been obtained previously by Fang \cite{fang2012} in a general setting.
Other results have been proved in the case where $\sigma^2$ takes only two values \cite{fangzeitouni2012a,bovierhartung2014} and in the weakly correlated case \cite{bovierhartung2015}.
%\cite{bovierhartung2014,bovierhartung2015}
%\cite{fangzeitouni2012b,mallein2015,maillardzeitouni2016} max of BBM or BRW with decreasing variance.
%\cite{fang2012} general tightness result for the max of a time-inhomogeneous BRW around its median.
%\cite{fangzeitouni2012a} 2-speed BRW (both cases)

Now we come back to our framework with a BRW with %mean number of children $e$ and
Poisson$(s^{-\alpha})$ jumps.
For any fixed horizon $t$, one can rewrite the law of the jump at time $s$ as  Poisson$(t^{-\alpha} \cdot \left( s/t \right)^{-\alpha})$.
Hence, two effects can be expected here: a first one, coming from the inhomogeneity in the variance profile that is given by the function
%the inhomogeneity according to the profile 
$u \in [0,1] \mapsto u^{-\alpha}$ and
a second one, coming from the overall reduction of the variance by a factor $t^{-\alpha}$.
If the jumps had law $\mathcal{N}(0,t^{-\alpha} \cdot \left( s/t \right)^{-\alpha})$, a scaling argument would allow us to compare with the previously mentioned case and the maximum should be of order $t^{-\alpha/2} \cdot t$ for $\alpha < 2$%
\footnote{In the literature, the function $\sigma$ is not allowed to diverge at 0, but as long as it is integrable the result for the first order of the maximum should hold.}.
In the case of Poisson$(s^{-\alpha})$ jumps and mean number of children $e$, Theorem \ref{thm:powers_of_log} suggests that the first order of the maximum should be $t/(\alpha \log t)$.
The reason for this different behavior is that the overall reduction of the variance forces the maximal particles to behave in a very large deviations regime, for which there is no universality.
%The global reduction of the variance forces the maximal particles to behave in a very large deviation regime and therefore it is not a surprise that there is no universality.
In particular, even if we are in the strongly correlated case, the difference with the maximum of independent random walks does not necessarily appear at the first order, see Remark~\ref{rem:comparison_iid}.

%%%%%%%%%%%%%%%%%%%%%%%%%%%%%%%%%%%%%%%%%%%%%%%%%%%%
\subsection{Organization of the paper}
%%%%%%%%%%%%%%%%%%%%%%%%%%%%%%%%%%%%%%%%%%%%%%%%%%%%
The paper is organized as follows:
In Section~\ref{sec:many-to-few} we introduce some notation and recall two results that hold in all generality for WRTs, a ``many-to-one'' and a ``many-to-two'' lemmas, which allow us to respectively re-express first and second moment estimates on some key quantities as probabilities of events involving an inhomogeneous random walk with Bernoulli increments.
Then, we use the many-to-one lemma to get some first moment estimate in Section~\ref{sec:upper-bound}, which leads to a general upper bound on the height of the tree, stated in Proposition~\ref{prop:upper_bound}.
In Section~\ref{sec:lower-bound}, we then use the many-to-two lemma to get some second moment estimate, which yields the general lower bound on the height of the tree contained in Proposition~\ref{prop:lower_bound}. 
Finally in Section~\ref{sec:proof of the main results}, we prove Theorem~\ref{thm:powers_of_log} and Theorem~\ref{thm:quickly} by applying Proposition~\ref{prop:upper_bound} and Proposition~\ref{prop:lower_bound} to the particular behavior of the weight sequences that we consider. 
The strategy for the proof is presented in Section~\ref{sec:strategy_upper_bound}, Section~\ref{sec:strategy_lower_bound} and Section~\ref{sec:strategy_main_result}.

In the appendix, Section~\ref{sec:crude upper bound} considers crude upper bounds on $\haut(\ttT_n)$ that one can make using the many-to-one lemma in a naive way and compares those to the actual behavior of $\haut(\ttT_n)$ obtained in Theorem~\ref{thm:powers_of_log} and Theorem~\ref{thm:quickly}.
Section \ref{sec:crude lower bound} presents a simple method to obtain lower bounds, at least when the sequence of weights is regular enough, which provides the correct first order in the regimes considered in this paper.
In Section~\ref{sec:slowly varying}, we prove a few technical results concerning slowly varying functions. %, which are needed throughout the paper. %, which are needed to exploit the assumptions under which our main results are stated.
Section~\ref{sec:correspondence behavior an and log Wn} contains the proof of \eqref{eq:transfer assum Wn an} which is helpful for checking that the assumption of our results are satisfied for particular weight sequences.
Section~\ref{sec:application} contains some computations ensuring that we can apply our results to the examples presented in the introduction.

%%%%%%%%%%%%%%%%%%%%%%%%%%%%%%%%%%%%%%%%%%%%%%%%%%%%%%%%%%%%%%%%%%%%
\section{Definitions and many-to-few lemmas}
%%%%%%%%%%%%%%%%%%%%%%%%%%%%%%%%%%%%%%%%%%%%%%%%%%%%%%%%%%%%%%%%%%%%
\label{sec:many-to-few}
In this section we recall some definitions and some notation concerning WRTs and state two useful lemmas: the many-to-one lemma (Lemma~\ref{lem:many_to_one}) and the many-to-two lemma (Lemma~\ref{lem:many-to-two}). 
%%%%%%%%%%%%%%%%%%%%%%%%%%%%%%%%%%%%%%%%%%%%%%%%%%%%
\subsection{Some definitions}
%%%%%%%%%%%%%%%%%%%%%%%%%%%%%%%%%%%%%%%%%%%%%%%%%%%%
\label{sec:some_definitions}

%\paragraph{Recursive trees.}
%Recursive trees on $n$ vertices are rooted trees whose vertices are labeled with the integers $1$ to $n$ such that the labels along any path starting from the root form a strictly increasing sequence.
%We denote $\mathbb{T}_n$ the set of such trees. 
%%
%Note that the root is necessarily the vertex with label $1$. 
%%
%According to these definitions, the sequence $(\ttT_n)_{n\geq 1}$ constructed in the introduction takes its values in $\bigcup_{n\geq 1}\mathbb T_n$. 
%%
%%Let us also introduce
%%\begin{align*}
%%\mathbb{T}^\bullet_n \coloneqq \enstq{(\ttt, \ttu)}{\ttt\in \mathbb{T}_n,\  \ttu \in \ttt}, \quad \text{and} \quad \mathbb{T}^{\bullet\bullet}_n \coloneqq \enstq{(\ttt,\ttu,\ttv)}{\ttt\in \mathbb{T}_n,\  \ttu \in \ttt,\ \ttv\in \ttt},
%%\end{align*}
%%the set of recursive trees of size $n$ endowed with respectively one or two distinguished vertices. 
%
%\MP{Inutile non ?}

\paragraph{Labels and ancestors of a vertices.} 
%Let $\ttt\in\mathbb{T}_n$. 
For any $\ttu \in \ttT_n$, we write $\mathrm{lab}(\ttu)$ for the label of vertex $\ttu$ in the tree $\ttT_n$, which is an integer between $1$ and $n$.
For any $k\leq n $ we write $\ttu(k)$ for the most recent ancestor of $\ttu$ that has label smaller or equal to $k$. 
For any $\ttu,\ttv \in \ttT_n$, we denote $\ttu\wedge \ttv$ the most recent common ancestor of $\ttu$ and $\ttv$ in the tree $\ttT_n$.  

%\MP{Ici on peut remplacer $\ttt$ par $\ttT_n$.}

\paragraph{An auxiliary tree.} 
For any integer $N\geq 1$, we construct a new tree $\ttT_n^{(N)}$ from $\ttT_n$: we first remove all vertices with labels 2 through $N$ and then attach all of them and all of their children to the root.
Note that $\ttT_n^{(N)}$ has distribution $\wrt((w_i^{(N)})_{i\geq 1})$, 
where the sequence of weights $(w_i^{(N)})_{i\geq 1}$ is related to the sequence $(w_i)_{i\geq 1}$ as follows:
\begin{align} \label{eq:def_w^(N)}
w_i^{(N)}=
\begin{cases}
W_N & \text{if} \quad i=1,\\
0 & \text{if} \quad 2\leq i\leq N,\\
w_i & \text{if} \quad i\geq N+1.
\end{cases}
\end{align}
In other words, the sequence $(w_i^{(N)})_{i\geq 1}$ is obtained from $(w_i)_{i\geq 1}$ by transferring all the weight of vertices labelled $2$ through $N$ to the root and leaving the rest unchanged. 
We also write $W_n^{(N)} \coloneqq \sum_{i=1}^n w_i^{(N)}$.

%$\haut(\ttT_n^{(N)}) \leq \haut(\ttT_n) \leq \haut(\ttT_n^{(N)}) + N-1$.

%%%%%%%%%%%%%%%%%%%%%%%%%%%%%%%%%%%%%%%%%%%%%%%%%%%%
\subsection{Many-to-few lemmas}
%%%%%%%%%%%%%%%%%%%%%%%%%%%%%%%%%%%%%%%%%%%%%%%%%%%%

In this section we state the many-to-few lemmas, which allow to compute first and second moments of functionals of the heights along the lineage of a vertex chosen in $\ttT_n$ according to its weight.
They have been proved in \cite{painsenizergues2022}, using a specific construction of $\ttT_n$ coupled with two distinguished vertices introduced by Mailler and Uribe Bravo \cite[Proposition~9]{mailleruribebravo2019}.
\begin{lemma}[Many-to-one] \label{lem:many_to_one}
	For any function $F: \N^n\rightarrow \R$ we have
	\begin{align*}
	\Ec{\sum_{i=1}^{n}\frac{w_i}{W_n} \cdot F(\haut(\ttu_i(1)),\haut(\ttu_i(2)),\dots ,\haut(\ttu_i(n)))} = \Ec{F(H_1,H_2,\dots,H_n)},
	\end{align*}
	where, we defined for $j \geq 1$, $H_j=\sum_{i=2}^j B_i$ with $(B_i)_{i\geq 2}$ a sequence of independent random variables such that $B_i$ has distribution $\mathrm{Bernoulli}(w_i/W_i)$.
\end{lemma}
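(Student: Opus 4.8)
The plan is to recast the identity as a distributional statement about a single size-biased vertex and prove it by induction on $n$. Let $V_n$ be a vertex of $\ttT_n$ chosen, conditionally on $\ttT_n$, with $\Ppsq{V_n = \ttu_i}{\ttT_n} = w_i/W_n$ for $1 \le i \le n$; then the left-hand side of the lemma is exactly $\Ec{F(\haut(V_n(1)), \dots, \haut(V_n(n)))}$. This expectation depends only on the joint law of $(\ttT_n, V_n)$, and that law is pinned down by the size-biasing rule, so I am free to realise the sequence $(V_m)_{m \ge 1}$ through any convenient coupling with $(\ttT_m)_{m\ge 1}$. The one I would use is the standard recursive coupling (a one-vertex version of the construction of \cite{painsenizergues2022}): introduce independent variables $C_m \sim \mathrm{Bernoulli}(w_m/W_m)$, $m \ge 2$, independent of $(\ttT_m)_{m \ge 1}$, and set $V_1 = \ttu_1$, $V_m = \ttu_m$ if $C_m = 1$, and $V_m = V_{m-1}$ if $C_m = 0$. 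A one-line induction gives $\Ppsq{V_m = \ttu_i}{\ttT_m} = w_i/W_m$ for all $m$, so this is a legitimate realisation. It then suffices to show that $(\haut(V_n(1)), \dots, \haut(V_n(n)))$ has the law of $(H_1, \dots, H_n)$ for every $n$, which I prove by induction; the base case $n = 1$ is immediate since $\haut(V_1(1)) = 0 = H_1$.

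For the inductive step I would condition on $C_n$. The two elementary facts needed are: (a) for $k \le n-1$ the restricted ancestor of $\ttu_n$ satisfies $\ttu_n(k) = (\ttu_{K_n})(k)$, where $\ttu_{K_n}$ is the parent of $\ttu_n$, and $\haut(\ttu_n) = \haut(\ttu_{K_n}) + 1$; (b) conditionally on $\ttT_{n-1}$, the parent $\ttu_{K_n}$ is itself a size-biased vertex of $\ttT_{n-1}$, so by the induction hypothesis $(\haut(\ttu_{K_n}(1)), \dots, \haut(\ttu_{K_n}(n-1)))$ has the law of $(H_1, \dots, H_{n-1})$ — and so does $(\haut(V_{n-1}(1)), \dots, \haut(V_{n-1}(n-1)))$. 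On $\{C_n = 1\}$ one has $V_n = \ttu_n$, whence, using (a) and $\mathrm{lab}(\ttu_{K_n}) \le n-1$, the vector equals $(\haut(\ttu_{K_n}(1)), \dots, \haut(\ttu_{K_n}(n-1)), \haut(\ttu_{K_n}(n-1)) + 1)$; on $\{C_n = 0\}$ one has $V_n = V_{n-1}$, which has label at most $n-1$, so $V_n(n) = V_{n-1}$ and the vector equals $(\haut(V_{n-1}(1)), \dots, \haut(V_{n-1}(n-1)), \haut(V_{n-1}(n-1)))$. Because $C_n$ is independent of $(\ttT_{n-1}, K_n)$, mixing the two cases with weights $w_n/W_n$ and $1 - w_n/W_n$ yields
\[
(\haut(V_n(1)), \dots, \haut(V_n(n))) \overset{(d)}{=} (G_1, \dots, G_{n-1}, G_{n-1} + B_n),
\]
where $(G_1, \dots, G_{n-1})$ has the law of $(H_1, \dots, H_{n-1})$ and $B_n \sim \mathrm{Bernoulli}(w_n/W_n)$ is independent of it. Since $H_n = H_{n-1} + B_n$ with $B_n$ independent of $(H_1, \dots, H_{n-1})$, the right-hand side is precisely the law of $(H_1, \dots, H_n)$, which closes the induction and proves the lemma.

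The part of the argument that requires the most care is the independence bookkeeping in the inductive step: one must check that the new Bernoulli increment is genuinely independent of the vector of previously revealed heights. This is where it matters that $C_n$ is an external coin independent of $(\ttT_{n-1}, K_n)$, so that conditioning on $\{C_n = 1\}$ does not perturb the conditional law of $\ttu_{K_n}$ given $\ttT_{n-1}$, and that on $\{C_n = 0\}$ the first $n-1$ coordinates are literally left untouched. One also has to be slightly careful with the convention in the definition of $\ttu(k)$ — namely that $\ttu$ itself qualifies as its own "most recent ancestor of label $\le k$" — in order to justify the identities $\ttu_n(k) = (\ttu_{K_n})(k)$ for $k \le n-1$ and $V_n(n) = V_{n-1}$ on $\{C_n = 0\}$.
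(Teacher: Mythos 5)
Your proof is correct, and it follows essentially the same route as the cited source: the paper's own ``proof'' of Lemma~\ref{lem:many_to_one} is simply a pointer to \cite[Lemma~2.3]{painsenizergues2022}, which in turn relies on the Mailler--Uribe Bravo coupling of the tree with distinguished (size-biased) spine vertices, and your recursive coupling of a single spine $V_m$ is precisely the one-vertex version of that construction. The induction on $n$, the conditioning on the coin $C_n$, the identity $\ttu_n(k)=\ttu_{K_n}(k)$ for $k\le n-1$, and the independence bookkeeping (that $C_n$ is an external coin independent of $(\ttT_{n-1},K_n)$, so conditioning on its value does not disturb the law of the first $n-1$ coordinates) are all handled correctly, so the argument closes.
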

\begin{proof}
	This is Lemma~2.3 in \cite{painsenizergues2022} with $\theta = 0$ (note that $\theta$ has a specific non zero value in \cite{painsenizergues2022}, but all results in Section~2 of \cite{painsenizergues2022} stay true for any $\theta \in \R$, as mentioned in Remark 2.2 there).
\end{proof}
\begin{remark} \label{rem:consequence_many-to-one}
	One consequence of the many-to-one lemma, mentioned in the introduction, is that $\haut(\ttu_{n+1})$ has the same distribution as $H_{n}+1$. Indeed, $\ttu_{n+1}$ is attached to a vertex $\ttu_i$, for $1 \leq i \leq n$, chosen proportionally to its weight $w_i$ and in that case $\haut(\ttu_{n+1}) = \haut(\ttu_i(n))+1$.
	Therefore, for any test function $f \colon \N \to \R$, we have 
	\[
		\Ec{f(\haut(\ttu_{n+1}))} 
		= \Ec{\sum_{i=1}^{n} \frac{w_i}{W_n} \cdot f(\haut(\ttu_i(n))+1)}
		= \Ec{f(H_{n}+1)},
	\]
	where we used the many-to-one lemma in the last equality.
\end{remark}
\begin{lemma}[Many-to-two] \label{lem:many-to-two}
	For any $\ell \geq 1$, we introduce a sequence $(B_i^\ell,\overline{B}\vphantom{B}_i^\ell)_{i\geq 2}$ of independent couples of random variables such that
	\begin{itemize}
		\item if $i < \ell$, $B_i^\ell$ has distribution Bernoulli$(w_i/W_i)$ and $\overline{B}\vphantom{B}_i^\ell=B_i^\ell$;
		\item if $i = \ell$, $B_i^\ell=\overline{B}\vphantom{B}_i^\ell=1$;
		\item if $i > \ell$, $B_i^\ell$ and $\overline{B}\vphantom{B}_i^\ell$ have the same  distribution as two independent Bernoulli$(w_i/W_i)$ random variables conditioned not to be both equal to 1.
	\end{itemize}
	Then, for any $n\geq 1$ and any functions $F \colon \N^n \to \R$ and $f \colon \intervalleentier{1}{n} \to \R$, we have 
	\begin{align*}
	%\label{eq:many-to-two forme exacte}
	& \Ec{\sum_{1\leq i,j\leq n}
		\frac{w_i w_j}{W_n^2} \cdot f(\mathrm{lab}(\ttu_i\wedge \ttu_j)) 
		\cdot F(\haut(\ttu_i(1)),\dots ,\haut(\ttu_i(n)))
		\cdot F(\haut(\ttu_j(1)),\dots ,\haut(\ttu_j(n)))} \notag\\
	& = \sum_{\ell=1}^n \frac{w_\ell^2}{W_\ell^2} 
	\cdot \left( \prod_{i=\ell+1}^{n} \left(1-\frac{w_i^2}{W_i^2} \right) \right) \cdot f(\ell)\cdot 
	\Ec{ F(H^\ell_1,\dots , H^\ell_n)\cdot  F(\overline{H}\vphantom{H}_1^\ell, \dots, \overline{H}\vphantom{H}^\ell_n)},
	\end{align*}
	where $H_j^\ell=\sum_{i=2}^j B_i^\ell$ and $\overline{H}\vphantom{H}_j^\ell=\sum_{i=2}^j \overline{B}\vphantom{B}_i^\ell$ for $j \geq 1$.
\end{lemma}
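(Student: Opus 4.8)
The statement to prove is the many-to-two lemma (Lemma~\ref{lem:many-to-two}). The plan is to reduce it to the corresponding statement already established in \cite{painsenizergues2022}, in the same way that the many-to-one lemma (Lemma~\ref{lem:many_to_one}) was deduced from Lemma~2.3 there. The proof of \cite{painsenizergues2022} relies on the coupled construction of $\ttT_n$ with two distinguished vertices from \cite[Proposition~9]{mailleruribebravo2019}: one samples two vertices $\ttu_{I}, \ttu_{J}$ in $\ttT_n$ with probability proportional to $w_I w_J / W_n^2$, and the construction reveals both lineages simultaneously. The key combinatorial input is that, reading the labels along the two lineages, the two distinguished spines coincide up to the label $\ell := \mathrm{lab}(\ttu_I \wedge \ttu_J)$ of their most recent common ancestor, after which they evolve independently but are forced never to both jump at the same later label (because after separating, the two vertices have distinct ancestors at every level $> \ell$).

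First I would recall precisely the two-spine construction and write the joint law of $(\mathrm{lab}(\ttu_I\wedge\ttu_J), (\haut(\ttu_I(k)))_{k\le n}, (\haut(\ttu_J(k)))_{k\le n})$: conditionally on the common ancestor having label $\ell$, the increments of the two height-processes at a label $i$ are exactly governed by the indicator that $\ttu_I$ (resp. $\ttu_J$) has an ancestor with label $i$, and these indicators are Bernoulli$(w_i/W_i)$ for $i<\ell$ with the two spines sharing the same indicator, deterministically equal to $1$ at $i=\ell$, and for $i>\ell$ a pair of Bernoulli$(w_i/W_i)$ variables conditioned not to be simultaneously $1$. This is exactly the law of $(B_i^\ell,\overline{B}_i^\ell)_{i\ge2}$ in the statement. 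Second, I would compute the probability that $\mathrm{lab}(\ttu_I\wedge\ttu_J)=\ell$: the common spine survives up to $\ell$ with the two vertices making the same choices, then separates, and thereafter at each label $i\in\{\ell+1,\dots,n\}$ avoids a double jump; this produces exactly the prefactor $\frac{w_\ell^2}{W_\ell^2}\prod_{i=\ell+1}^n(1-\frac{w_i^2}{W_i^2})$. Summing over $\ell$ and using that $f$ depends only on $\mathrm{lab}(\ttu_I\wedge\ttu_J)$ while $F$ is applied separately to each lineage then yields the claimed identity.

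Concretely, the cleanest route is simply to invoke the corresponding lemma of \cite{painsenizergues2022} (the many-to-two lemma proved there, which is stated with a parameter $\theta$) and specialise to $\theta=0$, noting as in the proof of Lemma~\ref{lem:many_to_one} that all the results of Section~2 of \cite{painsenizergues2022} remain valid for every $\theta\in\R$. This gives the formula verbatim after matching notation, so the ``proof'' is a one-line citation; the paragraphs above describe the underlying mechanism in case one wants to make it self-contained.

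The main obstacle, if one insists on a self-contained argument rather than a citation, is bookkeeping the two-spine construction carefully enough to justify the exact form of the post-separation increments — in particular checking that after the two distinguished vertices split at their common ancestor, the events $\{\ttu_I \text{ has an ancestor labelled } i\}$ and $\{\ttu_J \text{ has an ancestor labelled } i\}$ for $i>\ell$ are, jointly and independently over $i$, distributed as two Bernoulli$(w_i/W_i)$'s conditioned not to coincide at $1$, and that the resulting normalisation telescopes into the displayed product. Since this is precisely the content of \cite[Proposition~9]{mailleruribebravo2019} combined with the first-moment computation already carried out for Lemma~\ref{lem:many_to_one}, I would present the proof as a reduction to \cite{painsenizergues2022} rather than reproving it.
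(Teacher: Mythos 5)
Your proposal is correct and matches the paper's approach exactly: the paper proves this lemma by citing Lemma~2.4 of \cite{painsenizergues2022} with $\theta=0$, just as you propose, with the two-spine construction of \cite{mailleruribebravo2019} being the underlying mechanism you correctly identify. The only small point the paper adds that you omit is an explicit remark that the reference phrases the law of $(B_i^\ell,\overline{B}\vphantom{B}_i^\ell)$ for $i>\ell$ differently (as $B_i^\ell\sim\mathrm{Bernoulli}(q_i/(1+q_i))$ followed by $\overline{B}\vphantom{B}_i^\ell\sim\mathrm{Bernoulli}(q_i\1_{\{B_i^\ell=0\}})$ given $B_i^\ell$), and that one must check this agrees with the ``two independent Bernoullis conditioned not to both equal $1$'' description in the statement.
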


\begin{proof}
	This is Lemma~2.4 in \cite{painsenizergues2022} with $\theta = 0$.
	In the reference, for $i > \ell$, the distribution of $(B_i^\ell,\overline{B}\vphantom{B}_i^\ell)$ is described as follows, with $q_i \coloneqq w_i/W_i$: the random variable 
	$B_i^\ell$ has distribution Bernoulli$(q_i/(1+q_i))$ and, given $B_i^\ell$, the other one $\overline{B}\vphantom{B}_i^\ell$ has distribution $\mathrm{Bernoulli}(q_i \1_{\{B_i^\ell = 0\}})$.
	One can check that this is equivalent to the description given in the statement of the lemma.
\end{proof}

\section{Upper bound for the height}\label{sec:upper-bound}
%%%%%%%%%%%%%%%%%%%%%%%%%%%%%%%%%%%%%%%%%%%%%%%%%%%%%%%%%%%%%%%%%%%%

In this section, we provide a criterion for obtaining an upper bound for the height of $\ttT_n$ for general weight sequences $(w_i)_{i\geq 1}$. This is the content of Proposition~\ref{prop:upper_bound}. 

%%%%%%%%%%%%%%%%%%%%%%%%%%%%%%%%%%%%%%%%%%%%%%%%%%%%
\subsection{Strategy}\label{sec:strategy_upper_bound}
%%%%%%%%%%%%%%%%%%%%%%%%%%%%%%%%%%%%%%%%%%%%%%%%%%%%

We work with an increasing sequence of integers $(i_r)_{r\geq 0}$ with $i_0=1$. 
We define an associated nondecreasing sequence of integers $(t_n)_{n\geq 1}$ as follows: for $n \geq 1$, $t_n$ is the unique integer $r \geq 0$ such that $n\in \intervalleentier{i_{r-1}+1}{i_{r}}$, with the convention $i_{-1} = 0$.
In particular, $t_{i_r} = r$ for any $r \geq 0$.

This sequence $(i_r)_{r\geq 0}$ has to be thought as a time change such that $\haut(\ttT_{i_r}) \simeq r$. Then, proving that $\haut(\ttT_{i_r}) \leq r$ for $r$ large enough implies that $\haut(\ttT_n) \leq t_n$ for $n$ large enough (at least when $n=i_{t_n}$, but one can fill the gaps easily by monotonicity).
The sequence $(i_r)_{r\geq 0}$ will be chosen explicitly for each specific case in Section \ref{sec:proof of the main results}.
It has to be chosen so that $\haut(\ttT_{i_r})$ is sufficiently smaller than $r$ to allow the argument to work, but as close to $r$ as possible to get the best upper bound possible.

The upper bound relies on a first moment calculation on the number of high vertices.
However, we need first to introduce a barrier controlling the height along the ancestral line of these high vertices, otherwise this first moment calculation does not give the desired upper bound, see Section~\ref{sec:crude upper bound}.
This type of barrier argument is classical in the BRW literature and is used in the time-inhomogeneous setting, see e.g. \cite{mallein2015,maillardzeitouni2016}.
A reasonable barrier here consists in enforcing that the ancestor $\ttu_m(i_r)$ of a high vertex $\ttu_m$ has a height at most $r$, since we expect that $\haut(\ttT_{i_r}) \leq r$.
Therefore, we introduce the following quantity, for $n \geq 1$,
\begin{equation} \label{eq:def_Q_n}
Q_n \coloneqq \sum_{m=1}^n \frac{w_m}{W_n} \1_{\haut(\ttu_m) = t_n} \1_{\forall r \in \llbracket 0, t_n \llbracket,\, \haut(\ttu_m(i_r)) \leq r},
\end{equation}
which is the \textit{weighted} number of vertices of height $t_n$ in $\ttT_n$, whose ancestral line respected the barrier constraint.
Working with this weighted version is more convenient when applying the many-to-few lemmas.

The first moment of $Q_n$ is estimated in Lemma \ref{lem:bound first moment Q_n}: by the many-to-one lemma, it equals
\begin{equation} \label{eq:proba}
\Pp{H_n = t_n \quad \text{and} \quad \forall r \in \llbracket 0, t_n \llbracket, H_{i_r} \leq r}.
\end{equation}
The walk $(H_k)_{k\geq 1}$ on this event typically stays close to the barrier (at first order). 
This behavior becomes typical after the following time-inhomogeneous change of measure: the distribution of $B_i$ for $i_{r-1} < i \leq i_r$ is biased by $e^{\theta_r B_i}$, where $\theta_r$ is chosen such that $H_{i_r} -H_{i_{r-1}}$ has mean 1 under the new measure.
Actually, we do not need to specify precisely the choice of this sequence $(\theta_r)_{r\geq 1}$ in this section, but in Section \ref{sec:proof of the main results}, we always choose $\theta_r = - \log (a_{i_r} - a_{i_{r-1}})$ for $r$ large enough, which ensures that $H_{i_r} -H_{i_{r-1}}$ has approximately distribution Poisson$(1)$.
It is helpful to keep this in mind while reading the proofs.

Finally, a criterion for the upper bound is established in Proposition \ref{prop:upper_bound}.
Heuristically, the mean (non-weighted) number of vertices of height $t_n$ in $\ttT_n$ satisfying the barrier constraint is roughly $n \E[Q_n]$ and the criterion says that, if $i_t \E[Q_{i_t}]$ is summable in $t$, then $\haut(\ttT_n) \leq t_n$ for large $n$.

%%%%%%%%%%%%%%%%%%%%%%%%%%%%%%%%%%%%%%%%%%%%%%%%%%%%
\subsection{First moment estimate}
%%%%%%%%%%%%%%%%%%%%%%%%%%%%%%%%%%%%%%%%%%%%%%%%%%%%

\begin{lemma}\label{lem:bound first moment Q_n}
	Let $(\theta_r)_{r\geq 1}$ be a non-decreasing sequence of non-negative numbers and 
	\begin{equation} \label{eq:def_p_i}
	p_i \coloneqq \frac{e^{\theta_{t_i}} \frac{w_i}{W_i}}{1+(e^{\theta_{t_i}} -1)\frac{w_i}{W_i}}, \qquad i \geq 2.
	\end{equation}
	Let $n \geq 1$.
	Let $(Y_i)_{i\geq 2}$ be a sequence of independent Bernoulli$(p_i)$ random variables.
	We set $S_0 \coloneqq 0$ and, for $r \geq 1$,
	\begin{equation} \label{eq:def_S_r}
	X_r \coloneqq \sum_{i=i_{r-1}+1}^{i_{r}\wedge n} Y_i
	\qquad \text{and} \qquad 
	S_r \coloneqq \sum_{s = 1}^r (X_s-1).
	\end{equation}
	Then, we have
	\begin{align} 
	\E \left[ Q_n \right] 
	& = \left( \prod_{i=2}^{n} \left( 1+(e^{\theta_{t_i}} -1)\frac{w_i}{W_i} \right)  \right) 
	\exp\left(- \sum_{r=1}^{t_n} \theta_r  \right) 
	\Ec{\exp\left( \sum_{r=1}^{t_n-1} (\theta_{r+1}-\theta_r) S_r \right)
		\1_{S_{t_n} = 0, \, \forall r < t_n, S_r \leq 0}} \nonumber \\
	&\leq \exp \left( \sum_{i=2}^n (e^{\theta_{t_i}} -1) \frac{w_i}{W_i} - \sum_{r=1}^{t_n} \theta_r  \right). \label{eq:first_moment_Q_n}
	\end{align}
\end{lemma}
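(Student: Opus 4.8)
The plan is to compute $\E[Q_n]$ exactly by first applying the many-to-one lemma (Lemma~\ref{lem:many_to_one}) and then performing the exponential change of measure encoded by the sequence $(\theta_r)_{r\geq 1}$. Recall that $Q_n$ in \eqref{eq:def_Q_n} is a weighted sum over vertices of height $t_n$ respecting the barrier, so by Lemma~\ref{lem:many_to_one} applied with the appropriate indicator function $F$, we get $\E[Q_n] = \Pp{H_n = t_n \text{ and } \forall r \in \llbracket 0, t_n \llbracket,\ H_{i_r} \leq r}$, where $H_j = \sum_{i=2}^j B_i$ with $B_i \sim \mathrm{Bernoulli}(w_i/W_i)$ independent. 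Since $H_{i_0} = H_1 = 0 \leq 0$ automatically, the barrier constraint is $H_{i_r} \leq r$ for $1 \leq r < t_n$, together with $H_n = t_n$.

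The key step is the change of measure. First I would rewrite the probability above in terms of the increments. Under the tilted law where each $B_i$ (for $i_{r-1} < i \leq i_r$) is biased by $e^{\theta_{t_i} B_i}$, the new success probability of $B_i$ is exactly $p_i$ from \eqref{eq:def_p_i}, with normalizing factor $1 + (e^{\theta_{t_i}} - 1)\frac{w_i}{W_i}$ for each $i$. Thus, writing $Y_i$ for the tilted Bernoulli$(p_i)$ variables and $\mathrm{d}\P/\mathrm{d}\widetilde\P$ for the Radon--Nikodym derivative, we have
\[
\E[Q_n] = \left( \prod_{i=2}^{n} \left( 1 + (e^{\theta_{t_i}} - 1)\tfrac{w_i}{W_i} \right) \right) \widetilde\E\!\left[ \exp\!\left( -\sum_{i=2}^n \theta_{t_i} Y_i \right) \1_{\{\text{barrier and endpoint}\}} \right].
\]
Now I would reorganize the exponent $\sum_{i=2}^n \theta_{t_i} Y_i$ by grouping indices into the blocks $\llbracket i_{r-1}+1, i_r \rrbracket$: on block $r$, all $\theta_{t_i}$ equal $\theta_r$, so the sum is $\sum_{r=1}^{t_n} \theta_r X_r$ (with $X_r$ as in \eqref{eq:def_S_r}, truncated at $n$). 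Using $X_r = (S_r - S_{r-1}) + 1$ and Abel summation, $\sum_{r=1}^{t_n} \theta_r X_r = \sum_{r=1}^{t_n}\theta_r + \sum_{r=1}^{t_n}\theta_r(S_r - S_{r-1}) = \sum_{r=1}^{t_n}\theta_r - \sum_{r=1}^{t_n-1}(\theta_{r+1}-\theta_r)S_r + \theta_{t_n} S_{t_n}$, where I use $S_0 = 0$. On the event in question, $H_{i_r} = \sum_{s=1}^r X_s = S_r + r$, so the barrier $H_{i_r}\leq r$ becomes $S_r \leq 0$ for $r < t_n$, and $H_n = t_n$ becomes $S_{t_n} = 0$; hence the term $\theta_{t_n}S_{t_n}$ vanishes on this event. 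Substituting gives exactly the claimed identity
\[
\E[Q_n] = \left( \prod_{i=2}^{n} \left( 1 + (e^{\theta_{t_i}} - 1)\tfrac{w_i}{W_i} \right) \right) \exp\!\left( -\sum_{r=1}^{t_n}\theta_r \right) \widetilde\E\!\left[ \exp\!\left( \sum_{r=1}^{t_n-1}(\theta_{r+1}-\theta_r)S_r \right) \1_{\{S_{t_n}=0,\ \forall r<t_n,\ S_r\leq 0\}} \right].
\]

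For the inequality \eqref{eq:first_moment_Q_n}, I would bound the expectation crudely: since $(\theta_r)$ is non-decreasing, $\theta_{r+1}-\theta_r \geq 0$, while on the event $S_r \leq 0$ for all $r < t_n$, so the exponent $\sum_{r=1}^{t_n-1}(\theta_{r+1}-\theta_r)S_r \leq 0$, hence the exponential is $\leq 1$; dropping the indicator as well bounds the whole expectation by $1$. Finally, using $1 + x \leq e^x$ on each factor of the product gives $\prod_{i=2}^n (1 + (e^{\theta_{t_i}}-1)\frac{w_i}{W_i}) \leq \exp(\sum_{i=2}^n (e^{\theta_{t_i}}-1)\frac{w_i}{W_i})$, and combining with the $\exp(-\sum_r \theta_r)$ factor yields \eqref{eq:first_moment_Q_n}. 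The main obstacle is purely bookkeeping: correctly tracking the block decomposition and the Abel summation so that the barrier/endpoint constraints translate cleanly into $\{S_{t_n}=0,\ S_r \leq 0\}$ and the boundary term $\theta_{t_n}S_{t_n}$ disappears; there is no real analytic difficulty once the algebra is set up carefully.
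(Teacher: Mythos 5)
Your proof is correct and follows essentially the same route as the paper: many-to-one, exponential tilting to Bernoulli$(p_i)$ variables, regrouping the exponent by blocks, Abel summation using $S_0=0$, dropping $\theta_{t_n}S_{t_n}$ on the event $\{S_{t_n}=0\}$, and then bounding the remaining expectation by $1$ and the product by $\exp(\sum(e^{\theta_{t_i}}-1)w_i/W_i)$. No gaps.
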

The calculation relies on a time-inhomogeneous change of measure similar to \cite[Lemma 3.3]{mallein2015} but here we do not need to estimate precisely the expectation appearing in the middle part of \eqref{eq:first_moment_Q_n}, since we are not aiming at the same level of precision.
\begin{proof}
	It follows from the many-to-one lemma (Lemma~\ref{lem:many_to_one}) that
	\begin{align*}
	\E \left[ Q_n \right] 
	& =\P \left( \sum_{i=2}^n B_i = t_n, \, \forall r < t_n, \sum_{i=2}^{i_r} B_i \leq r \right),
	\end{align*}
	where the $B_i$'s are independent Bernoulli r.v. with parameter $w_i/W_i$.
	Now note that $Y_i$ has the distribution of $B_i$ biased by $e^{\theta_{t_i} B_i}$, that is
	\[
	\forall \phi \colon \{0,1\} \to \R, \qquad 
	\Ec{\phi(Y_i)} 
	= \frac{\E[\phi(B_i) e^{\theta_{t_i} B_i}]}{\E[e^{\theta_{t_i} B_i}]}.
	\]
	Therefore, we have
	\begin{align*}
	\E \left[ Q_n \right] 
	= \Ec{\exp\left( \sum_{i=2}^{n} \theta_{t_i} B_i \right)}
	\Ec{\exp\left( -\sum_{i=2}^{n} \theta_{t_i} Y_i \right)
		\1_{\sum_{i=1}^n Y_i = t_n, \forall r < t_n, \sum_{i=1}^{i_r} Y_i \leq r}}.
	\end{align*}
%	\begin{align*}
%	\E \left[ Q_n \right] 
%	= \Ec{\exp\left( \sum_{r=1}^{t_n} \theta_r \sum_{i=i_{r-1}+1}^{i_{r}} B_i \ind{i\leq n} \right)}
%	\Ec{\exp\left( -\sum_{r=1}^{t_n} \theta_r \sum_{i=i_{r-1}+1}^{i_{r}} Y_i\ind{i\leq n}  \right)
%		\1_{\sum_{i=1}^n Y_i = t_n, \forall r < t_n, \sum_{i=1}^{i_r} Y_i \leq r}}.
%	\end{align*}
%	We set $S_0 \coloneqq 0$ and, for $r \geq 1$,
%	\begin{align*}
%	X_r \coloneqq \sum_{i=i_{r-1}+1}^{i_{r}} Y_i\ind{i\leq n}
%	\qquad \text{and} \qquad 
%	S_r \coloneqq \sum_{s = 1}^r (X_s-1).
%	\end{align*}
%	Note that the last sum is truncated at $n$.
	Then, we can rewrite
	\begin{align*}
	\sum_{i=2}^{n} \theta_{t_i} Y_i
	= \sum_{r=1}^{t_n} \theta_r \sum_{i=i_{r-1}+1}^{i_{r}\wedge n} Y_i
	= \sum_{r=1}^{t_n} \theta_r  + \sum_{r=1}^{t_n} \theta_r (X_r-1)
	= \sum_{r=1}^{t_n} \theta_r + \sum_{r=1}^{t_n-1} (\theta_r-\theta_{r+1}) S_r + \theta_{t_n} S_{t_n},
	\end{align*}
	using a summation by part and the fact that $S_0=0$.
	Hence, using that we are on the event $S_{t_n} = 0$, we get the equality in the lemma.
	Bounding the expectation by 1 because $(\theta_r)_{r\geq 1}$ is non-decreasing, and using the inequality $1+x \leq e^x$, the inequality follows.
\end{proof}

%%%%%%%%%%%%%%%%%%%%%%%%%%%%%%%%%%%%%%%%%%%%%%%%%%%%
\subsection{General criterion for the upper bound}
%%%%%%%%%%%%%%%%%%%%%%%%%%%%%%%%%%%%%%%%%%%%%%%%%%%%

\begin{proposition} \label{prop:upper_bound}
	Assume that is given a sequence $(\theta_r)_{r\geq 1}$ such that, for some $r_0 \geq 1$, $(\theta_r)_{r\geq r_0}$ is a non-decreasing sequence of non-negative numbers
%	If for some choice of non-decreasing sequence $(\theta_r)_{r\geq 1}$ of non-negative numbers, the sequence
	and such that the sequence
	\begin{align} \label{eq:ass_upper_bound}
	i_t \cdot \exp \left( \sum_{i=2}^{i_t} (e^{\theta_{t_i}} -1) \frac{w_i}{W_i} - \sum_{r=1}^t \theta_r \right) 
	\end{align}
	is summable in $t\geq 1$.
	Then, $\sup_{n\geq 1}(\haut(\ttu_{n})- t_n)<\infty$ almost surely.
\end{proposition}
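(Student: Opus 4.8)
The strategy is a classical first-moment (union-bound) argument combined with the barrier, turning the summability hypothesis \eqref{eq:ass_upper_bound} into an almost-sure upper bound on the height via Borel--Cantelli. First I would reduce the statement about all $n$ to a statement along the subsequence $(i_r)_{r\geq 0}$: since $t_n$ is non-decreasing and equals $r$ at $n=i_r$, and since the height of a vertex cannot exceed the height of the tree it lives in, it suffices to show that almost surely, for all $r$ large enough, every vertex present in $\ttT_{i_r}$ has height at most $r$ \emph{provided} its ancestral line respected the barrier $\haut(\ttu(i_s))\leq s$ for $s<r$; the barrier itself then has to be shown to be respected automatically once the height bound holds for all smaller indices, which is exactly the point of the induction on $r$ built into the definition of $Q_n$.

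The key computation is to control the expected \emph{unweighted} number of vertices of height exactly $r$ in $\ttT_{i_r}$ whose ancestral line respects the barrier. Call this $N_r$. Writing $N_r = \sum_{m=1}^{i_r} \1_{\haut(\ttu_m)=r}\1_{\text{barrier for }\ttu_m}$ and comparing with the weighted quantity $Q_{i_r}$ from \eqref{eq:def_Q_n}, one has $N_r \leq \frac{W_{i_r}}{\min_{m\leq i_r} w_m}\, Q_{i_r}$ in the crudest form, but that is wasteful; instead I would bound $\E[N_r]$ directly by $\frac{W_{i_r}}{w_{*}}\E[Q_{i_r}]$ only if needed, or better, observe that each term $\1_{\haut(\ttu_m)=r}$ contributes weight $w_m/W_{i_r}$ to $Q_{i_r}$ so that $\E[N_r]\leq \frac{W_{i_r}}{w_1}\,\E[Q_{i_r}]$ — wait, this is still not quite the clean bound. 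The cleaner route: heuristically $\E[N_r]\approx i_r\,\E[Q_{i_r}]$ because a vertex chosen uniformly among $\{1,\dots,i_r\}$ carries weight $\approx W_{i_r}/i_r$ on average, so $\frac{1}{i_r}\sum_m \1_{\cdots} \approx \frac{1}{W_{i_r}}\sum_m w_m\1_{\cdots}$ up to the fluctuation of the empirical weight distribution; to make this rigorous one can instead bound $\E[N_r] \le i_r \cdot \E[Q_{i_r}]\cdot \frac{\max_m w_m/W_{i_r}}{\text{something}}$ — so let me restate: the honest bound is $\E[N_r] = \sum_{m=1}^{i_r}\Pp{\haut(\ttu_m)=r,\text{ barrier}}$, and since $\ttu_{m}$ for $m\ge 2$ attaches to a weight-proportional parent, $\Pp{\haut(\ttu_m)=r,\dots}\le \P(H_{m-1}=r-1,\dots)$ which one can bound by $\frac{W_{i_r}}{w_m}\cdot\frac{w_m}{W_{i_r}}\P(\cdots)$; summing and invoking Lemma~\ref{lem:bound first moment Q_n} after noting $\sum_{m\le i_r} \frac{w_m}{W_{i_r}}\P(H_{m-1}=r-1,\dots)$ is comparable to $\E[Q_{i_r}]$ up to the constant from replacing $W_{m}$ by $W_{i_r}$, one gets $\E[N_r] \le C\, i_r\, \E[Q_{i_r}]$. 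Then by \eqref{eq:first_moment_Q_n} of Lemma~\ref{lem:bound first moment Q_n},
\[
\E[N_r] \le C\, i_r \exp\!\left( \sum_{i=2}^{i_r}(e^{\theta_{t_i}}-1)\frac{w_i}{W_i} - \sum_{s=1}^{r}\theta_s\right),
\]
which is exactly (a constant times) the $t=r$ term of the sequence \eqref{eq:ass_upper_bound}.

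With $\E[N_r]$ summable in $r$, Markov's inequality gives $\Pp{N_r\geq 1}\le \E[N_r]$ summable, so by Borel--Cantelli, almost surely $N_r = 0$ for all $r$ large enough; that is, for all large $r$, $\ttT_{i_r}$ has no barrier-respecting vertex of height $r$. An induction on $r$ then upgrades this to: for all large $r$, $\ttT_{i_r}$ has no vertex of height $\geq r$ at all. Indeed, suppose inductively that $\haut(\ttT_{i_s})\le s$ for all $s$ in a range $[r_1,r-1]$; then any vertex $\ttu_m\in\ttT_{i_r}$ with $m\le i_{r-1}$ already has height $\le r-1<r$, and any vertex with $m>i_{r-1}$, if it had height $\ge r$, would have some ancestor on its line that first reaches each level, and by the inductive hypothesis all its ancestors $\ttu_m(i_s)$ for $s<r$ have height at most $\haut(\ttT_{i_s})\le s$, so it respects the barrier; if moreover its height is exactly $r$ this contradicts $N_r=0$, and if its height is $>r$ then walking down its ancestral line to the first ancestor of height exactly $r$ gives a barrier-respecting vertex of height $r$ in $\ttT_{i_r}$, again a contradiction. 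Hence $\haut(\ttT_{i_r})\le r$ for all large $r$, and by monotonicity of both $\haut(\ttT_n)$ and $t_n$ in $n$, together with $t_{i_r}=r$, we conclude $\haut(\ttu_n)\le\haut(\ttT_n)\le t_n$ for all large $n$, which gives $\sup_{n\ge 1}(\haut(\ttu_n)-t_n)<\infty$ a.s.

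\textbf{Main obstacle.} The routine but slightly delicate point is the passage from the \emph{weighted} count $\E[Q_n]$ controlled by Lemma~\ref{lem:bound first moment Q_n} to the \emph{unweighted} expected number of high vertices $\E[N_r]$, i.e.\ justifying $\E[N_r]\le C\, i_r\,\E[Q_{i_r}]$ with a genuinely uniform constant; this requires using the many-to-one structure vertex-by-vertex and absorbing the discrepancy between $W_m$ and $W_{i_r}$ for $m\le i_r$ into a constant (which is harmless since $\theta_{t_i}\ge 0$ makes the relevant exponentials monotone, or can be handled by noting $W_m/W_{i_r}\le 1$ in the direction that helps). The induction closing the barrier is conceptually the heart of the argument but is combinatorially straightforward once the $N_r=0$ statement is in hand. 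Everything else is Markov plus Borel--Cantelli plus monotonicity.
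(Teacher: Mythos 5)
Your argument hinges on the bound $\E[N_r]\leq C\,i_r\,\E[Q_{i_r}]$, and this is where the proof breaks down. The weighted quantity $\E[Q_{i_r}]=\frac{1}{W_{i_r}}\sum_{m\le i_r}w_m\,\Pp{\haut(\ttu_m)=r,\text{barrier}}$ and the unweighted count $\E[N_r]=\sum_{m\le i_r}\Pp{\haut(\ttu_m)=r,\text{barrier}}$ are simply not comparable up to a factor of $i_r$ in the generality of the statement: the honest bound is $\E[N_r]\le \big(\max_m W_{i_r}/w_m\big)\,\E[Q_{i_r}]$, and since the $w_m$ are only assumed non-negative (some may be tiny or zero), this factor can be arbitrarily large. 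To see concretely that it fails, take $w_m$ negligible for $m$ near $i_r$: then $\E[Q_{i_r}]$ carries an extra small factor $w_{i_{r-1}+1}/W_{i_{r-1}+1}$ that is absent from $\E[N_r]$. The monotonicity of $\theta$ and of $W_m$ that you invoke does not rescue this; the discrepancy is between the weights themselves, not between the walk measures. The paper avoids this conversion altogether: it writes $\Pp{\haut(\ttu_{n+1})=t_{n+1}+1,\ \text{barrier}}$ directly as the \emph{weighted} sum $\Ec{Q_n}$, because the new vertex $\ttu_{n+1}$ attaches to a weight-proportional parent in $\ttT_n$, so the union-bound probability is already the weighted quantity with no unweighted count ever appearing.

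There is a second gap in your induction: Borel--Cantelli gives $N_r=0$ for all $r$ past a \emph{random} threshold $r_1$, but $N_r$ only counts vertices whose entire ancestral line respected the barrier at every level $s<r$, including $s<r_1$. Your inductive hypothesis $\haut(\ttT_{i_s})\le s$ for $s\in[r_1,r-1]$ says nothing about $s<r_1$, so a vertex of height $\ge r$ whose ancestor violated the barrier at some early level is not ruled out — and the base case $\haut(\ttT_{i_{r_1}})\le r_1$ is never established. The paper handles this by two complementary devices you do not use: (i) it reformulates the bad event as the \emph{first} exceedance, $\{\haut(\ttu_{n+1})=t_{n+1}+1\text{ and }\forall i\le n,\ \haut(\ttu_{n+1}(i))\le t_i\}$, which automatically respects the barrier along the whole ancestral line; and (ii) it passes to the auxiliary tree $\ttT_n^{(N)}$, in which the first $N$ vertices are merged into the root so that the barrier is trivially satisfied at early times and the resulting sum $\sum_n\Ec{Q_n^{(N)}}$ can be driven to $0$ as $N\to\infty$ — this is also why the conclusion is $\sup_n(\haut(\ttu_n)-t_n)<\infty$ and not the stronger (and in general unavailable) eventual bound $\haut(\ttT_{i_r})\le r$ that your argument targets.
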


\begin{proof}
For $n \geq 1$, first note that $\haut(\ttu_{n+1}) = \haut(\ttu_{n+1}(n))+1$, since by definition the vertex $\ttu_{n+1}$ is directly attached to the vertex $\ttu_{n+1}(n)$.
We write
\begin{align} \label{eq:inital_proba}
\{\exists n\geq 1,\ \haut(\ttu_{n+1})> t_{n+1}\} 
&= \bigcup_{n\geq 1}\{ \haut(\ttu_{n+1})= t_{n+1} +1 \text{ and } \forall i\leq n, \ \haut(\ttu_{n+1}(i))\leq t_i\},
\end{align}
where we used that if $\haut(\ttu_{n+1}(n)) \leq t_n$ then $\haut(\ttu_{n+1}) \leq t_n+1 \leq t_{n+1} +1$.
Given $\ttT_n$, vertex $\ttu_{n+1}$ is attached to $\ttu_m$ with probability $w_m/W_n$ for $1 \leq m \leq n$, and in that case $\haut(\ttu_{n+1}) = \haut(\ttu_m)+1$: therefore, we have
\begin{align} 
	\Pp{\haut(\ttu_{n+1})= t_{n+1} +1 \text{ and } \forall i\leq n,\ \haut(\ttu_{n+1}(i))\leq t_i}
	& = \Ec{ \sum_{m=1}^n \frac{w_m}{W_n} \1_{\haut(\ttu_m) = t_{n+1}} \1_{\forall i\leq n, \haut(\ttu_m(i)) \leq t_i} }. \label{eq:rewritting_proba}
\end{align}
%Recall $t_n$ is the unique integer $t$ so that $n\in \intervalleentier{i_{t-1}+1}{i_{t}}$.
If $n=i_t$, then $t_{n+1} = t_n +1$ and so the last expectation is zero, because the indicator functions both require $\haut(\ttu_m) = t_{n+1}$ and $\haut(\ttu_m) = \haut(\ttu_m(n)) \leq t_n$.
If $n\neq i_t$, then $t_{n+1} = t_n$ and, noting that
\[
	\{ \haut(\ttu_m) = t_n \text{ and } \forall i\leq n, \ \haut(\ttu_m(i)) \leq t_i\}
	= \{ \haut(\ttu_m) = t_n \text{ and } \forall r<t_n, \ \haut(\ttu_m(i_r)) \leq r\},
\]
it shows that the quantities displayed in \eqref{eq:rewritting_proba} are equal to $\Ec{Q_n}$.
Coming back to \eqref{eq:inital_proba} and using a union-bound, we hence get
\begin{align} \label{eq:upper bound proba ht un greater than xn}
	\Pp{\sup_{n\geq 1}(\haut(\ttu_{n})- t_n) >0}\leq \sum_{n\geq 1}\Ec{Q_n}.
\end{align}

Now we fix some large $N \geq 1$ and consider the trees $(\ttT_n^{(N)})_{n\geq 1}$ introduced in Section \ref{sec:some_definitions}. 
Let $Q_n^{(N)}$ be defined as in \eqref{eq:def_Q_n} but for the tree $\ttT_n^{(N)}$. % (and the same sequence $(i_r)_{r \geq 0}$).
For all $n\geq1$, we have $\haut(\ttu_n)\leq \haut(\ttu_n^{(N)})+N$, so
\begin{align} \label{eq:upper bound proba ht un greater than xn + N}
	\Pp{\sup_{n\geq 1}\left(\haut(\ttu_n)-t_n\right)> N}
	\leq \Pp{\sup_{n\geq 1}\left(\haut(\ttu_n^{(N)})-t_n\right)> 0}
	\leq \sum_{n\geq 1}\Ec{Q_n^{(N)}}
\end{align}
where the second inequality is obtained by applying \eqref{eq:upper bound proba ht un greater than xn} to the sequence $(\ttT_n^{(N)})_{n\geq 1}$ which is a weighted recursive tree with weight sequence $(w_n^{(N)})_{n\geq 1}$.
Now, going back to its definition \eqref{eq:def_Q_n}, we can check that $Q_n^{(N)}=0$ for any $2\leq n \leq N$. Indeed all the terms in the sum vanish: we have $t_n=1$, so the term corresponding to $m=1$ is zero because of the indicator $\ind{\haut(\ttu_1^{(N)})=1}$; the other ones vanish because $w_m^{(N)}=0$ for $2\leq m\leq n$. 
On the other hand, letting $\tilde{\theta}_r \coloneqq \theta_r \1_{r \geq r_0}$, so that 
$(\tilde{\theta}_r)_{r\geq 0}$ is a non-decreasing sequence of non-negative numbers, we apply Lemma~\ref{lem:bound first moment Q_n} to get
%\begin{align*}
%\E \left[ Q_n^{(N)} \right] 
%%%
%& \leq \exp \left( \sum_{r=1}^{t_n} (e^{\theta_r} -1)\sum_{i=i_{r-1}+1}^{i_{r}} 
%\frac{w_i^{(N)}}{W_i^{(N)}} - \sum_{r=1}^{t_n} \theta_r \right)
%%%
%\leq \exp \left( \sum_{r=1}^{t_n} (e^{\theta_r} -1)\sum_{i=i_{r-1}+1}^{i_{r}} 
%\frac{w_i}{W_i} - \sum_{r=1}^{t_n} \theta_r  \right),
%\end{align*}
\begin{align*}
\E \left[ Q_n^{(N)} \right] 
\leq \exp \left( \sum_{i=2}^n (e^{\tilde{\theta}_{t_i}} -1) \frac{w_i^{(N)}}{W_i^{(N)}} 
- \sum_{r=1}^{t_n} \tilde{\theta}_r \right)
\leq \exp \left( \sum_{i=i_{r_0-1}+1}^{n} (e^{\theta_{t_i}} -1) \frac{w_i}{W_i} 
- \sum_{r=r_0}^{t_n} \theta_r  \right),
\end{align*}
using that, for all $i\geq 2$, we have $w_i^{(N)}/W_i^{(N)} \leq w_i/W_i$.
In the end, using \eqref{eq:upper bound proba ht un greater than xn + N} and the discussion that follows we get that 
\begin{align*}
\Pp{\sup_{n\geq 1} \left(\haut(\ttu_n)-t_n\right)> N} \leq \sum_{n\geq N+1}\Ec{Q_n^{(N)}}
& \leq \sum_{n\geq N+1} \exp \left( \sum_{i=i_{r_0-1}+1}^{n} (e^{\theta_{t_i}} -1) \frac{w_i}{W_i} 
- \sum_{r=r_0}^{t_n} \theta_r \right) \\
& \leq \sum_{t\geq t_{N+1}-1} 
i_t \cdot \exp \left( \sum_{i=i_{r_0-1}+1}^{i_t} (e^{\theta_{t_i}} -1) \frac{w_i}{W_i} 
- \sum_{r=r_0}^{t} \theta_r  \right),%\exp \left( \sum_{i=2}^{i_t} (e^{\theta_{t_i}} -1) \frac{w_i}{W_i} - \sum_{r=1}^t \theta_r \right),
\end{align*}
by grouping terms.
The summand in the last sum differs from \eqref{eq:ass_upper_bound} only by a finite factor independent of $t$ and therefore is summable by assumption of the lemma.
Hence, this last sum tends to $0$ as $N \to \infty$, because $t_{N+1} \to \infty$. This finishes the proof.
\end{proof}

%%%%%%%%%%%%%%%%%%%%%%%%%%%%%%%%%%%%%%%%%%%%%%%%%%%%%%%%%%%%%%%%%%%%
\section{Lower bound for the height}\label{sec:lower-bound}
%%%%%%%%%%%%%%%%%%%%%%%%%%%%%%%%%%%%%%%%%%%%%%%%%%%%%%%%%%%%%%%%%%%%

In this section, we provide a criterion for obtaining a lower bound for the height of $\ttT_n$ for general weight sequences $(w_i)_{i\geq 1}$. This is the content of Proposition~\ref{prop:lower_bound}. 
We additionally prove Lemma~\ref{lem:E(n)}, which provides us with some estimates on one of the quantities that appear in the assumptions of Proposition~\ref{prop:lower_bound}. 

%%%%%%%%%%%%%%%%%%%%%%%%%%%%%%%%%%%%%%%%%%%%%%%%%%%%
\subsection{Strategy}\label{sec:strategy_lower_bound}
%%%%%%%%%%%%%%%%%%%%%%%%%%%%%%%%%%%%%%%%%%%%%%%%%%%%

As for the upper bound, we work with an increasing sequence of integers $(i_r)_{r\geq 0}$ with $i_0=1$, which provides a time change such that $\haut(\ttT_{i_r}) \simeq r$.
Again, the sequence $(i_r)_{r\geq 0}$ will be chosen explicitly for each specific case in Section \ref{sec:proof of the main results}.
The main difference for the lower bound is that it has to be chosen so that $\haut(\ttT_{i_r})$ is slightly larger than $r$ (rather than slightly smaller for the upper bound).
This ensures that there are typically a large number of vertices at height $r$ in $\ttT_{i_r}$ and that we can expect some concentration in a first and second moment argument.

As in Section \ref{sec:upper-bound}, we consider the sequence of integers $(t_n)_{n\geq 1}$ such that for any $n\geq 1$ we have $n \in \intervalleentier{i_{t_n-1}+1}{i_{t_n}}$, and the quantity $Q_n$ defined in \eqref{eq:def_Q_n}.
We first establish a lower bound for $\E[Q_n]$ in Lemma \ref{lem:lower_bound_first_moment_Q_n} and an upper bound for $\E[Q_n^2]$ in Lemma \ref{lem:second_moment_Q_n}.
These bounds show that $\E[Q_n^2]$ and $\E[Q_n]^2$ are of the same order.
This implies that $\P(Q_n > 0) \geq c > 0$ for $n$ large enough, which imply in particular that $\P(\haut(\ttT_n) \geq t_n) \geq c$.
However, as for the case of polynomially growing $W_n$ treated in~\cite{painsenizergues2022}, we emphasize that we cannot proceed here as for the BRW and use the branching property at some large fixed time to conclude that $\haut(\ttT_n) \geq t_n-C$ with high probability.
Indeed, subtrees rooted at the first vertices of the WRT do not behave independently.

Instead, we use the same method as in \cite{painsenizergues2022}: we apply this moment calculation to the auxiliary tree $\ttT_n^{(N)}$, whose height provides a lower bound for $\haut(\ttT_n)$.
For large $N$, the benefit is the following: in $\ttT_n^{(N)}$, two particles contributing to $Q_n$ chosen independently have w.h.p.\@ the root as most recent common ancestor.
This results from two facts: 
(i) the barrier prevents from having too large groups of close cousins contributing to $Q_n$, so that the most recent common ancestor has to be close to the root, 
(ii) in $\ttT_n^{(N)}$, the root has a much larger weight than each of its close descendants, so it is unlikely that one of this descendant has two children with progeny contributing to $Q_n$ (this mimics the branching property argument in a rigorous way).
This guarantees that $\E[Q_n^2]$ and $\E[Q_n]^2$ are now asymptotically equivalent and therefore $\P(Q_n > 0) \to 1$.

The main differences with \cite{painsenizergues2022} are the following:
(i) the second moment calculation requires a time-inhomogeneous change of measure as in Section \ref{sec:upper-bound},
(ii) here we do not want to estimate precisely the remaining expectations involving the random walk $S_n$ (as the one appearing for the first moment),
(iii) because of the latter, $N$ has to grow appropriately with $n$, whereas it was fixed in \cite{painsenizergues2022}.

%The results in this section are more technical than in Section \ref{sec:upper-bound}, because many error terms appear along the way. However, we try to provide insight 

%%%%%%%%%%%%%%%%%%%%%%%%%%%%%%%%%%%%%%%%%%%%%%%%%%%%
\subsection{Moments calculation}
%%%%%%%%%%%%%%%%%%%%%%%%%%%%%%%%%%%%%%%%%%%%%%%%%%%%

The following lower bound for $\E[Q_n]$ is a consequence of Lemma~\ref{lem:bound first moment Q_n}. The sum $\sum_{i=2}^{n} e^{2\theta_{t_i}} \frac{w_i^2}{W_i^2}$ has to be thought as an error term. For now, we do not aim at estimating the expectation involving the walk $S$ appearing in the statement: although it plays a negligible role, it has to be handled carefully.
\begin{lemma} \label{lem:lower_bound_first_moment_Q_n}
	Let $(\theta_r)_{r\geq 1}$ be a non-decreasing sequence of non-negative numbers.
	Let $n \geq 1$ and $(S_r)_{r\geq 0}$ defined as in \eqref{eq:def_S_r}.
	Then, we have 
	\begin{align*}
	\E \left[ Q_n \right] 
	& \geq \exp\left( \sum_{i=2}^{n} (e^{\theta_{t_i}} -1)\frac{w_i}{W_i} 
	- \sum_{i=2}^{n} \frac{e^{2\theta_{t_i}}}{2} \frac{w_i^2}{W_i^2}
	- \sum_{r=1}^{t_n} \theta_r \right) 
	\Ec{\exp\left( \sum_{r=1}^{t_n-1} (\theta_{r+1}-\theta_r) S_r \right)
		\1_{S_{t_n} = 0, \, \forall r < t_n, S_r \leq 0}}.
	\end{align*}
\end{lemma}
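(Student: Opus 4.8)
The plan is to start from the exact identity for $\E[Q_n]$ provided by Lemma~\ref{lem:bound first moment Q_n}, namely
\[
\E[Q_n] = \left( \prod_{i=2}^{n} \left( 1+(e^{\theta_{t_i}} -1)\tfrac{w_i}{W_i} \right) \right) \exp\left(- \sum_{r=1}^{t_n} \theta_r \right) \Ec{\exp\left( \sum_{r=1}^{t_n-1} (\theta_{r+1}-\theta_r) S_r \right) \1_{S_{t_n} = 0,\, \forall r < t_n,\, S_r \leq 0}},
\]
and observe that the expectation factor is exactly the one appearing in the statement to be proved. So the whole task reduces to showing that the deterministic prefactor $\prod_{i=2}^{n}(1+(e^{\theta_{t_i}}-1)\tfrac{w_i}{W_i})$ is bounded below by $\exp\bigl(\sum_{i=2}^{n}(e^{\theta_{t_i}}-1)\tfrac{w_i}{W_i} - \sum_{i=2}^{n}\tfrac{e^{2\theta_{t_i}}}{2}\tfrac{w_i^2}{W_i^2}\bigr)$.

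The key step is therefore a purely elementary inequality: for each $i$, write $x_i \coloneqq (e^{\theta_{t_i}}-1)\tfrac{w_i}{W_i} \geq 0$ and use $\log(1+x) \geq x - \tfrac{x^2}{2}$, valid for all $x \geq 0$ (indeed for all $x > -1$). Summing over $i$ gives $\log \prod_{i=2}^n (1+x_i) \geq \sum_{i=2}^n x_i - \tfrac12 \sum_{i=2}^n x_i^2$. It remains to bound $x_i^2 = (e^{\theta_{t_i}}-1)^2 \tfrac{w_i^2}{W_i^2} \leq e^{2\theta_{t_i}} \tfrac{w_i^2}{W_i^2}$, which is immediate since $0 \leq e^{\theta_{t_i}}-1 \leq e^{\theta_{t_i}}$ (using $\theta_r \geq 0$). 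Combining,
\[
\prod_{i=2}^{n}\left(1+(e^{\theta_{t_i}}-1)\tfrac{w_i}{W_i}\right) \geq \exp\left( \sum_{i=2}^{n}(e^{\theta_{t_i}}-1)\tfrac{w_i}{W_i} - \sum_{i=2}^{n}\tfrac{e^{2\theta_{t_i}}}{2}\tfrac{w_i^2}{W_i^2} \right).
\]
Multiplying this lower bound for the prefactor by the (nonnegative) factors $\exp(-\sum_{r=1}^{t_n}\theta_r)$ and the expectation term — which is nonnegative because it is the expectation of a nonnegative random variable — yields exactly the claimed inequality.

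There is essentially no obstacle here: the lemma is a routine corollary of Lemma~\ref{lem:bound first moment Q_n}, and the only content is replacing the product by its exponential lower bound via $\log(1+x)\geq x-x^2/2$ together with the crude bound $(e^\theta-1)^2\leq e^{2\theta}$. The mild point to be careful about is that all the dropped factors are nonnegative so that multiplying inequalities preserves direction: this uses $\theta_r\geq 0$ (so $e^{\theta_{t_i}}-1\geq 0$, making each factor $1+x_i\geq 1>0$) and the fact that the indicator-weighted exponential inside the expectation is a nonnegative random variable. I would also note that the hypothesis that $(\theta_r)_{r\geq1}$ is non-decreasing is not actually needed for this lemma beyond what is inherited from Lemma~\ref{lem:bound first moment Q_n}; it is kept in the statement only for consistency with how the lemma will later be applied.
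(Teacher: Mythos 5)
Your proof is correct and follows essentially the same route as the paper's, which invokes the equality in Lemma~\ref{lem:bound first moment Q_n} together with $1+x \geq \exp(x - x^2/2)$ for $x\geq 0$. You are simply more explicit about the additional crude bound $(e^{\theta_{t_i}}-1)^2 \leq e^{2\theta_{t_i}}$ that the paper leaves implicit, and your side remark that only non-negativity (not monotonicity) of $(\theta_r)$ is used here is accurate.
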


\begin{proof}
	This follows directly from the equality in 
	Lemma~\ref{lem:bound first moment Q_n}, together with the inequality $1+x \geq \exp(x-\frac{x^2}{2})$ for $x \geq 0$.
\end{proof}
%%
%%%%%%%%%%%%%%%%%%%%%%%%%%%%%%%%
%%
We now prove an upper bound for $\E[Q_n^2]$. This relies on the many-to-two lemma and a time-inhomogeneous change of measure. We keep aside the term $\ell = 1$ given by the many-to-two lemma, because it will be the main term once we work with the auxiliary tree $\ttT_n^{(N)}$.
In that case, the first term in the bound below is roughly $\E[Q_n]^2$ and the second one is an error term.
Note in the proof that we manage to bound the term $\ell=1$ in terms of $\E[Q_n]^2$ using Lemma \ref{lem:lower_bound_first_moment_Q_n}, but without estimating the expectation involving the walk $S$.
\begin{lemma} \label{lem:second_moment_Q_n}
	Let $(\theta_r)_{r\geq 1}$ be a non-decreasing sequence of non-negative numbers.
	Recall the definition of $p_i$ in~\eqref{eq:def_p_i}.
	Let $n \geq 1$ such that $n =i_{t_n}$. 
	Then, we have 
	\begin{align*}
	\Ec{Q_n^2} 
	& \leq \left( \prod_{i=2}^n (1-p_i^2)^{-1} \right) \cdot
	\exp\left( \sum_{i=2}^{n} e^{2\theta_{t_i}} \frac{w_i^2}{W_i^2} \right) \cdot \E \left[ Q_n \right]^2 \\
	& \qquad {}
	+ \exp \left( \sum_{i=2}^{n} (e^{\theta_{t_i}} -1) \frac{w_i}{W_i}
	- \sum_{r=1}^{t_n} \theta_r \right)
	\sum_{\ell=2}^n \frac{w_\ell^2}{W_\ell^2}  \cdot
	\exp \left( \theta_{t_\ell} 
	+ \sum_{i=\ell+1}^{n} (e^{\theta_{t_i}} -1) \frac{w_i}{W_i}
	- \sum_{r=t_\ell+1}^{t_n} \theta_r \right).
	\end{align*}
\end{lemma}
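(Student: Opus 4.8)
The plan is to apply the many-to-two lemma (Lemma~\ref{lem:many-to-two}) to the weighted double-counting quantity $Q_n^2$, then perform, for each value of the branching label $\ell$, the same time-inhomogeneous change of measure that appeared in the proof of Lemma~\ref{lem:bound first moment Q_n}. Concretely, writing $Q_n$ as in \eqref{eq:def_Q_n}, the square $Q_n^2$ is a sum over pairs $(m,m')$ of weighted indicators, and since $n = i_{t_n}$ the barrier event $\{\forall r<t_n,\ \haut(\ttu_m(i_r))\leq r\}$ together with $\{\haut(\ttu_m)=t_n\}$ is exactly the kind of functional $F$ handled by Lemma~\ref{lem:many-to-two}, with $f(\ell)\equiv 1$. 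This yields
\[
\Ec{Q_n^2}=\sum_{\ell=1}^n \frac{w_\ell^2}{W_\ell^2}\left(\prod_{i=\ell+1}^n\left(1-\frac{w_i^2}{W_i^2}\right)\right)\Ec{G(H^\ell)\,G(\overline H^\ell)},
\]
where $G$ denotes the relevant barrier-and-height indicator applied to the walk and $H^\ell,\overline H^\ell$ are as in the many-to-two lemma. I would then split off the $\ell=1$ term from the sum $\sum_{\ell\geq 2}$ and treat the two pieces separately.

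For the $\ell=1$ term, the two walks $H^1$ and $\overline H^1$ are built from increments $(B_i^1,\overline B_i^1)$ that are, for each $i$, two independent Bernoullis conditioned not to be both $1$; in particular the two walks are \emph{not} independent, but the conditioning only costs a factor $\prod_{i\geq 2}(1-w_i^2/W_i^2)^{-1}$ when we pass to the product of the two marginal (unconditioned) laws. After this decoupling, each marginal is exactly a sequence of independent Bernoulli$(w_i/W_i)$ increments, i.e. each factor is precisely $\E[Q_n]$-type. More carefully: I would bias each $B_i$ (resp. $\overline B_i$) by $e^{\theta_{t_i} B_i}$ as in Lemma~\ref{lem:bound first moment Q_n}, which introduces the normalization $\prod_{i=2}^n(1+(e^{\theta_{t_i}}-1)w_i/W_i)$ per walk and the deterministic drift $\exp(-\sum_{r=1}^{t_n}\theta_r)$ per walk; crucially I do \emph{not} evaluate the residual expectation involving the summation-by-parts term $\sum_r(\theta_{r+1}-\theta_r)S_r$ but instead recognize, by comparing with the equality in Lemma~\ref{lem:bound first moment Q_n} (or rather its lower-bound refinement Lemma~\ref{lem:lower_bound_first_moment_Q_n}), that this residual product of two walk-expectations is bounded above by the square of the corresponding expectation for a single walk, up to the quadratic correction $\exp(\sum_i e^{2\theta_{t_i}}w_i^2/W_i^2)$ coming from the $1+x\leq e^x$ versus $1+x\geq e^{x-x^2/2}$ discrepancy and from $\prod(1-p_i^2)^{-1}$. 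Collecting these factors and matching them against $\E[Q_n]^2$ via Lemma~\ref{lem:lower_bound_first_moment_Q_n} gives the first line of the claimed bound.

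For the terms $\ell\geq 2$, I would simply drop the barrier/height constraints on both walks except what is needed — in fact bound $G(H^\ell)G(\overline H^\ell)\leq 1$ after the change of measure, or more precisely keep only the exponential tilt factors and bound the indicator by $1$. Applying the same $e^{\theta_{t_i}B_i}$-tilt on $[2,n]$ (noting that for $i\leq \ell$ the two walks coincide so there is effectively one walk on $[2,\ell]$, contributing the factor $e^{\theta_{t_\ell}}$ from the forced jump $B_\ell^\ell=\overline B_\ell^\ell=1$, and for $i>\ell$ the conditioning again only helps), together with $1+x\leq e^x$ and $\prod_{i=\ell+1}^n(1-w_i^2/W_i^2)\leq 1$, produces exactly the second line: the prefactor $\exp(\sum_{i=2}^n(e^{\theta_{t_i}}-1)w_i/W_i-\sum_{r=1}^{t_n}\theta_r)$ (the ``common past'' up to $\ell$, shared between the two branches only once), times $\sum_{\ell\geq 2}(w_\ell^2/W_\ell^2)\exp(\theta_{t_\ell}+\sum_{i=\ell+1}^n(e^{\theta_{t_i}}-1)w_i/W_i-\sum_{r=t_\ell+1}^{t_n}\theta_r)$ for the ``independent future'' of the two branches. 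The main obstacle I anticipate is bookkeeping the tilt-normalization constants so that the $\ell=1$ term lands on $\E[Q_n]^2$ rather than on some uncontrolled expectation: one must resist estimating the $S$-walk expectation and instead invoke Lemma~\ref{lem:lower_bound_first_moment_Q_n} as a black box to reabsorb it, while carefully tracking that the $\prod(1-p_i^2)^{-1}$ factor (from decoupling the conditioned pair) and the $\exp(\sum e^{2\theta_{t_i}}w_i^2/W_i^2)$ factor (from the quadratic Bernoulli corrections, using $p_i\leq e^{\theta_{t_i}}w_i/W_i$) are exactly the two correction factors stated. The $\ell\geq 2$ part is comparatively routine once the tilt is set up.
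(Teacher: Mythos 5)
Your overall strategy matches the paper's: apply the many-to-two lemma, split off the $\ell=1$ term, perform the time-inhomogeneous tilt by $e^{\theta_{t_i}}$, bound the $\ell\geq 2$ walk-expectations crudely by $1$, and reabsorb the $\ell=1$ term into $\E[Q_n]^2$ via the lower bound of Lemma~\ref{lem:lower_bound_first_moment_Q_n}. The $\ell\geq 2$ sketch is essentially the paper's.

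For $\ell=1$, however, your write-up contains an internal inconsistency that you should resolve. You first say the decoupling of the two conditioned walks costs $\prod_{i\geq 2}(1-w_i^2/W_i^2)^{-1}$ and that ``after this decoupling, each marginal is exactly a sequence of independent Bernoulli$(w_i/W_i)$ increments''; you later attribute the decoupling cost to $\prod(1-p_i^2)^{-1}$. These correspond to two genuinely different orders of operation. If you decouple \emph{before} tilting, the cost is $\prod_{i\geq 2}(1-(w_i/W_i)^2)^{-1}$, which cancels exactly with the prefactor $\prod_{i=2}^n(1-w_i^2/W_i^2)$ coming from the many-to-two lemma; the remaining expectation factorizes as $\E[F(H)]\,\E[F(\overline{H})]$ with independent Bernoulli$(w_i/W_i)$ increments, which is $\E[Q_n]^2$ \emph{exactly} by the many-to-one lemma — no tilting, no use of Lemma~\ref{lem:lower_bound_first_moment_Q_n}, and hence a cleaner bound $\leq\E[Q_n]^2$ on the $\ell=1$ term, from which the lemma's claimed bound follows a fortiori since $\prod(1-p_i^2)^{-1}$ and $\exp(\sum_i e^{2\theta_{t_i}}w_i^2/W_i^2)$ are both $\geq 1$. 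The paper instead tilts first and then decouples the \emph{tilted} pair $(Y_i^1,\overline{Y}\vphantom{Y}_i^1)$, and it is only in that order that the factor $\prod(1-p_i^2)^{-1}$ appears and Lemma~\ref{lem:lower_bound_first_moment_Q_n} is needed to land on $\E[Q_n]^2$. Your proposal would work either way, but you should commit to one: as written, the paragraph reads as if both factors appear simultaneously, which is not the case. If you take your first route, drop the tilting for $\ell=1$ and the appeal to Lemma~\ref{lem:lower_bound_first_moment_Q_n} altogether; if you take the paper's route, replace your initial $\prod(1-w_i^2/W_i^2)^{-1}$ by $\prod(1-p_i^2)^{-1}$ and be explicit that the $\exp(\sum_i e^{2\theta_{t_i}}w_i^2/W_i^2)$ factor arises from matching the normalization $\prod(1+(e^{\theta_{t_i}}-1)w_i/W_i)^2$ against the square of the lower bound $\exp(\sum_i(e^{\theta_{t_i}}-1)w_i/W_i-\tfrac12\sum_i e^{2\theta_{t_i}}w_i^2/W_i^2)$ from Lemma~\ref{lem:lower_bound_first_moment_Q_n}.
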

\begin{proof}
Applying the many-to-two lemma (Lemma~\ref{lem:many-to-two}), we get 
\begin{align} \label{eq:many-to-two_to_Q_n}
	\Ec{Q_n^2} & = \sum_{\ell=1}^n \frac{w_\ell^2}{W_\ell^2} 
	\cdot \left( \prod_{i=\ell+1}^{n} \left(1-\frac{w_i^2}{W_i^2} \right) \right) \cdot 
	\Ec{ \1_{H^\ell_n = t_n} \1_{\forall r < t_n, H^\ell_{i_r} \leq r}
	\cdot  \1_{\overline{H}\vphantom{H}^\ell_n = t_n} \1_{\forall r < t_n, \overline{H}\vphantom{H}^\ell_{i_r} \leq r}},
\end{align}
where $H^\ell$ and $\overline{H}\vphantom{H}^\ell$ are the walks associated to the jumps $B_i^\ell$ and $\overline{B}\vphantom{B}_i^\ell$, jointly defined in Lemma~\ref{lem:many-to-two}.
Let the $(Y_i^\ell,\overline{Y}\vphantom{Y}_i^\ell)$'s for $i \geq 2$ be independent couples of Bernoulli r.v., with a distribution defined by
\[
	\forall \phi \colon \{0,1\}^2 \to \R, \qquad 
	\Ec{\phi(Y_i^\ell,\overline{Y}\vphantom{Y}_i^\ell)} 
	= \frac{\Ec{\phi(B_i^\ell,\overline{B}\vphantom{B}_i^\ell) 
	\exp \left(\theta_{t_i} (B_i^\ell + \overline{B}\vphantom{B}_i^\ell \1_{i > \ell}) \right)}}
	{\Ec{\exp \left(\theta_{t_i} (B_i^\ell + \overline{B}\vphantom{B}_i^\ell \1_{i > \ell}) \right)}}.
\]
Recalling the definition of $p_i$ in \eqref{eq:def_p_i},
one can check that
\begin{itemize}
	\item if $i < \ell$, then $Y_i^\ell$ has distribution Bernoulli$(p_i)$ and $\overline{Y}\vphantom{Y}_i^\ell=Y_i^\ell$;
	\item if $i = \ell$, then $Y_i^\ell=\overline{Y}\vphantom{Y}_i^\ell=1$;
	\item if $i > \ell$, then $Y_i^\ell$ and $\overline{Y}\vphantom{Y}_i^\ell$ have the same distribution as two independent Bernoulli$(p_i)$ r.v.\@ conditioned not to be both equal to $1$.
\end{itemize}
We set $S_0^\ell \coloneqq 0$, $\overline{S}\vphantom{S}_0^\ell \coloneqq 0$ and, for $r\geq 1$,
\begin{align*}
	X_r^\ell \coloneqq \sum_{i=i_{r-1}+1}^{i_r} Y_i^\ell,
	\qquad 
	S_r^\ell \coloneqq \sum_{s = 1}^r (X_s^\ell-1),
	\qquad 
	\overline{X}\vphantom{X}_r^\ell \coloneqq \sum_{i=i_{r-1}+1}^{i_r} \overline{Y}\vphantom{Y}_i^\ell
	\qquad \text{and} \qquad 
	\overline{S}\vphantom{S}_r^\ell \coloneqq \sum_{s = 1}^r (\overline{X}\vphantom{X}_s^\ell-1).
\end{align*}
Then, we use this change of measure to write
\begin{align*}
	& \Ec{ \1_{H^\ell_n = t_n} \1_{\forall r < t_n, H^\ell_{i_r} \leq r}
	\cdot \1_{\overline{H}\vphantom{H}^\ell_n = t_n} \1_{\forall r < t_n, \overline{H}\vphantom{H}^\ell_{i_r} \leq r}} \\
	& = \Ec{\exp\left( \sum_{i=2}^n \theta_{t_i} (B_i^\ell + \overline{B}\vphantom{B}_i^\ell \1_{i > \ell}) \right)}
	\Ec{\exp\left( -\sum_{i=2}^n \theta_{t_i} \left( Y_i^\ell + \overline{Y}\vphantom{Y}_i^\ell \1_{i > \ell} \right) \right)
	\1_{S_{t_n}^\ell = 0}
	\1_{\forall r < t_n, S_r^\ell \leq 0}
	\cdot 
	\1_{\overline{S}\vphantom{S}_{t_n}^\ell = 0}
	\1_{\forall r < t_n, \overline{S}\vphantom{S}_r^\ell \leq 0}}.
	%\1_{S_{t_n}^\ell = \overline{S}\vphantom{S}_{t_n}^\ell = 0, \forall r < t, S_r^\ell \leq 0, \overline{S}\vphantom{S}_r^\ell \leq 0}
\end{align*}
For the first expectation, using the definition of $(B_i^\ell,\overline{B}\vphantom{B}_i^\ell)$, we have % and noting that $t_1=0$ and $t_2=1$, we have
\begin{align*}
	\Ec{\exp\left( \sum_{i=2}^n \theta_{t_i} (B_i^\ell + \overline{B}\vphantom{B}_i^\ell \1_{i > \ell}) \right)}
	& = \prod_{i=2}^{\ell-1} \left( 1+(e^{\theta_{t_i}} -1) \frac{w_i}{W_i} \right) 
	\cdot e^{\theta_{t_\ell}\1_{\ell \geq 2}}
	\cdot \prod_{i=\ell+1}^{n} \left( 1+(e^{\theta_{t_i}} -1) 
	\frac{2 \frac{w_i}{W_i}}{1+ \frac{w_i}{W_i}} \right) \\
	& \leq \exp \left( \sum_{i=2}^{\ell-1} (e^{\theta_{t_i}} -1) \frac{w_i}{W_i}
	+ \theta_{t_\ell} \1_{\ell \geq 2} 
	+ \sum_{i=\ell+1}^{n} 2(e^{\theta_{t_i}} -1) \frac{w_i}{W_i} \right) \\
	& \leq \exp \left( \sum_{i=2}^{n} (e^{\theta_{t_i}} -1) \frac{w_i}{W_i}
	+ \theta_{t_\ell} \1_{\ell \geq 2} 
	+ \sum_{i=\ell+1}^{n} (e^{\theta_{t_i}} -1) \frac{w_i}{W_i} \right).
\end{align*}
On the other hand, for the second expectation, we rewrite 
\begin{align*}
	\sum_{i=2}^n \theta_{t_i} Y_i^\ell
	= \sum_{r=1}^{t_n} \theta_r  + \sum_{r=1}^{t_n} \theta_r (X_r^\ell-1)
	= \sum_{r=1}^{t_n} \theta_r + \sum_{r=1}^{t_n-1} (\theta_r-\theta_{r+1}) S_r^\ell + \theta_{t_n} S_{t_n}^\ell - \theta_1 S_0^\ell,
\end{align*}
with a summation by part, and similarly (noting that $\ell + 1 \leq i_{t_\ell}+1$ and using here that $n=i_{t_n}$)
\begin{align*}
	\sum_{i=\ell+1}^n \theta_{t_i} \overline{Y}\vphantom{Y}_i^\ell
	& = \sum_{i=\ell+1}^{i_{t_\ell}} \theta_{t_i} \overline{Y}\vphantom{Y}_i^\ell
	+ \sum_{i=i_{t_\ell}+1}^n \theta_{t_i} \overline{Y}\vphantom{Y}_i^\ell 
	= \sum_{i=\ell+1}^{i_{t_\ell}} \theta_{t_i} \overline{Y}\vphantom{Y}_i^\ell
	+ \sum_{r=t_{\ell}+1}^{t_n} \theta_r  + \sum_{r=t_{\ell}+1}^{t_n} \theta_r (\overline{X}\vphantom{X}_r^\ell-1) \\
	& = \sum_{i=\ell+1}^{i_{t_\ell+1}} \theta_{t_i} \overline{Y}\vphantom{Y}_i^\ell
	+ \sum_{r=t_\ell+1}^{t_n} \theta_r 
	+ \sum_{r=t_\ell+1}^{t_n-1} (\theta_r-\theta_{r+1}) \overline{S}\vphantom{S}_r^\ell 
	+ \theta_{t_n} \overline{S}\vphantom{S}_{t_n}^\ell - \theta_{t_\ell+1} \overline{S}\vphantom{S}_{t_\ell}^\ell,
\end{align*}
Using the bounds $\sum_{i=\ell+1}^{i_{t_\ell+1}} \theta_{t_i} \overline{Y}\vphantom{Y}_i^\ell \geq 0$ and $- \theta_{t_\ell+1} \overline{S}\vphantom{S}_{t_\ell}^\ell \geq 0$ (on the event $S_{t_\ell}^\ell \leq 0$), and recalling that on the event of interest we have $S_{t_n}^\ell =\overline{S}\vphantom{S}_{t_n}^\ell= 0$  we get
\begin{align*}
	& \Ec{\exp\left( -\sum_{i=2}^n \theta_{t_i} \left( Y_i^\ell + \overline{Y}\vphantom{Y}_i^\ell \1_{i > \ell} \right) \right)
	\1_{S_{t_n}^\ell = 0}
	\1_{\forall r < t_n, S_r^\ell \leq 0}
	\cdot 
	\1_{\overline{S}\vphantom{S}_{t_n}^\ell = 0}
	\1_{\forall r < t_n, \overline{S}\vphantom{S}_r^\ell \leq 0}} \\
	& \leq \exp\left( - \sum_{r=1}^{t_n} \theta_r
	- \sum_{r=t_\ell+1}^{t_n} \theta_r \right)
	\Ec{\exp\left( \sum_{r=1}^{t_n-1} (\theta_{r+1}-\theta_r) S_r^\ell 
		+ \sum_{r=t_\ell+1}^{t_n-1} (\theta_{r+1}-\theta_r) \overline{S}\vphantom{S}_r^\ell \right)
	\1_{S_{t_n}^\ell = \overline{S}\vphantom{S}_{t_n}^\ell = 0, \forall r < t_n, S_r^\ell \leq 0, \overline{S}\vphantom{S}_r^\ell \leq 0}}.
\end{align*}
Coming back to \eqref{eq:many-to-two_to_Q_n} and using the bound $\prod_{i=\ell+1}^{n}(1-w_i^2/W_i^2) \leq 1$, we proved
\begin{align} 
	\Ec{Q_n^2} 
	& \leq \sum_{\ell=1}^n \frac{w_\ell^2}{W_\ell^2}  \cdot
	\exp \left( \sum_{i=2}^{n} (e^{\theta_{t_i}} -1) \frac{w_i}{W_i}
	+ \theta_{t_\ell} \1_{\ell \geq 2} 
	+ \sum_{i=\ell+1}^{n} (e^{\theta_{t_i}} -1) \frac{w_i}{W_i} 
	- \sum_{r=1}^{t_n} \theta_r
	- \sum_{r=t_\ell+1}^{t_n} \theta_r \right) \nonumber \\
	& \qquad \quad {} \cdot 
	\Ec{\exp\left( \sum_{r=1}^{t_n-1} (\theta_{r+1}-\theta_r) S_r^\ell 
		+ \sum_{r=t_\ell+1}^{t_n-1} (\theta_{r+1}-\theta_r) \overline{S}\vphantom{S}_r^\ell \right)
		\1_{S_{t_n}^\ell = \overline{S}\vphantom{S}_{t_n}^\ell = 0, \forall r < t_n, S_r^\ell \leq 0, \overline{S}\vphantom{S}_r^\ell \leq 0}}. \label{eq:bound_2nd_moment}
\end{align}
We now focus on the term $\ell = 1$ (so that $t_\ell = 0$).
Recall the definition of $(Y_i)_{i\geq 2}$ and $(S_r)_{r\geq0}$ in Lemma~\ref{lem:bound first moment Q_n} and let $(\overline{Y}\vphantom{Y}_i)_{i\geq 2}$ and $(\overline{S}\vphantom{S}_r)_{r\geq0}$ be independent copies of them.
Then, by definition of the $(Y_i^1,\overline{Y}\vphantom{Y}_i^1)$'s, the expectation on the RHS of \eqref{eq:bound_2nd_moment} equals (we use here that $n=i_{t_n}$ so that the indicator function appearing in \eqref{eq:def_S_r} does not play a role)
\begin{align*}
	& \Ecsq{\exp\left( \sum_{r=1}^{t_n-1} (\theta_{r+1}-\theta_r) S_r
		+ \sum_{r=1}^{t_n-1} (\theta_{r+1}-\theta_r) \overline{S}\vphantom{S}_r \right)
		\1_{S_{t_n} = \overline{S}\vphantom{S}_{t_n} = 0, \forall r < t_n, S_r \leq 0, \overline{S}\vphantom{S}_r \leq 0}}{\forall i \in \llbracket 2,n\rrbracket, (Y_i,\overline{Y_i}) \neq (1,1) } \\
	& \leq \frac{\Ec{\exp\left( \sum_{r=1}^{t_n-1} (\theta_{r+1}-\theta_r) S_r
			+ \sum_{r=1}^{t_n-1} (\theta_{r+1}-\theta_r) \overline{S}\vphantom{S}_r \right)
			\1_{S_{t_n} = \overline{S}\vphantom{S}_{t_n} = 0, \forall r < t_n, S_r \leq 0, \overline{S}\vphantom{S}_r \leq 0}}}
		{\Pp{\forall i \in \llbracket 2,n\rrbracket, (Y_i,\overline{Y_i}) \neq (1,1) }} \\
	& = \left( \prod_{i=2}^n (1-p_i^2)^{-1} \right) \cdot 
	\Ec{\exp\left( \sum_{r=1}^{t_n-1} (\theta_{r+1}-\theta_r) S_r \right)
		\1_{S_{t_n} = 0, \forall r < t_n, S_r \leq 0}}^2.
\end{align*}
Therefore, the $\ell =1$ term on the RHS of \eqref{eq:bound_2nd_moment} is at most
\begin{align*} 
& \exp \left( \sum_{i=2}^{n} 2(e^{\theta_{t_i}} -1) \frac{w_i}{W_i}
- \sum_{r=1}^{t_n} 2\theta_r \right) \cdot
\left( \prod_{i=2}^n (1-p_i^2)^{-1} \right) \cdot 
\Ec{\exp\left( \sum_{r=1}^{t_n-1} (\theta_{r+1}-\theta_r) S_r \right)
	\1_{S_{t_n} = 0, \forall r < t_n, S_r \leq 0}}^2 \\
& \leq \left( \prod_{i=2}^n (1-p_i^2)^{-1} \right) \cdot
\exp\left( \sum_{i=2}^{n} e^{2\theta_{t_i}} \frac{w_i^2}{W_i^2} \right) \cdot \E \left[ Q_n \right]^2,
\end{align*}
by Lemma~\ref{lem:lower_bound_first_moment_Q_n}.
For the $\ell \geq 2$ terms on the RHS of \eqref{eq:bound_2nd_moment}, we simply bound the expectation by 1, using that $(\theta_k)_{k\geq1}$ is non-decreasing.
This proves the result.
\end{proof}
%%
%%%%%%%%%%%%%%%%%%%%%%%%%%%%%%%%
%%
%\begin{lemma} \label{lem:Poisson approx}
%	Let $t \geq 0$ and $n =i_t$.
%	Let $(S_r)_{r\geq 0}$ be defined as in \eqref{eq:def_S_r}.
%	Let $(\widehat{S}_r)_{r\geq 0}$ be a random walk with jump distribution $\mathrm{Poisson}(1)-1$.
%	Then, for any non-decreasing sequence $(\theta_r)_{r\geq 1}$ of non-negative numbers,
%	we have 
%	\begin{align} 
%	& \abs{\Ec{\exp\left( \sum_{r=1}^{t_n-1} (\theta_{r+1}-\theta_r) S_r \right)
%		\1_{S_{t_n} = 0, \, \forall r < t_n, S_r \leq 0}}
%	- \Ec{\exp\left( \sum_{r=1}^{t_n-1} (\theta_{r+1}-\theta_r) \widehat{S}_r \right)
%		\1_{\widehat{S}_{t_n} = 0, \, \forall r < t_n, \widehat{S}_r \leq 0}} } 
%	\nonumber \\
%	& \leq 4 \left( \sum_{i=2}^n e^{2\theta_{t_i}} \frac{w_i^2}{W_i^2} \right)
%	+ 2 \left( \sum_{r=1}^{t_n} \abs{1-\sum_{i=i_{r-1}+1}^{i_r} e^{\theta_{t_i}} \frac{w_i}{W_i}} \right). 
%	\label{eq:Poisson approx}
%	\end{align}
%\end{lemma}
%
%\begin{proof}
%	It follows directly from \cite[Lemma~A.1]{painsenizergues2022}, that \eqref{eq:Poisson approx} is smaller than 
%	\begin{align*}
%	2 \left( \sum_{i=2}^n p_i^2 \right)
%	+ 2 \left( \sum_{r=1}^{t_n} \abs{1-\Ec{X_r}} \right)
%	=
%	2 \left( \sum_{i=2}^n p_i^2 \right)
%	+ 2 \left( \sum_{r=1}^{t_n} \abs{1-\sum_{i=i_{r-1}+1}^{i_r}p_i} \right).
%	\end{align*}
%	Using that $e^{\theta_{t_i}} \frac{w_i}{W_i} - (e^{\theta_{t_i}} \frac{w_i}{W_i})^2 \leq p_i \leq e^{\theta_{t_i}} \frac{w_i}{W_i}$, the result follows.
%\end{proof}
%%
%%%%%%%%%%%%%%%%%%%%%%%%%%%%%%%%
%%

%%%%%%%%%%%%%%%%%%%%%%%%%%%%%%%%%%%%%%%%%%%%%%%%%%%%
\subsection{General criterion for the lower bound}
%%%%%%%%%%%%%%%%%%%%%%%%%%%%%%%%%%%%%%%%%%%%%%%%%%%%

The proposition below is the main result of this section: it provides a general lower bound in terms of some quantities which we will have to control in each case.
\begin{proposition} \label{prop:lower_bound}
	Assume that are given
	\begin{itemize}
		\item a non-decreasing sequence $(T(n))_{n\geq 2}$ of non-negative integers such that $T(n) < t_n$ for any $n \geq 2$, and $T(n) \to \infty$ as $n \to \infty$;
		\item a sequence $(\theta_r)_{r\geq 1}$ such that, for some $r_0 \geq 1$, $(\theta_r)_{r\geq r_0}$ is a non-decreasing sequence of non-negative numbers.
	\end{itemize}
	Let $(S_r)_{r\geq 0}$ be defined as in \eqref{eq:def_S_r}.
	Define, for $n \geq 2$,
	\begin{align*}
%	q(t) & \coloneqq i_t \cdot \exp \left( \sum_{i=2}^{i_t} (e^{\theta_{t_i}} -1) \frac{w_i}{W_i} - \sum_{r=1}^t \theta_r \right) \\
	%%
%	& E(n) \coloneqq \Ec{\exp\left( \sum_{r=1}^{t_n-T(n)-1} (\theta_{T(n)+r+1}-\theta_{T(n)+r}) (S_{T(n)+r} - S_{T(n)}) \right)
%		\1_{S_{t_n} - S_{T(n)} = 0, \, \forall r < t_n-T(n), S_{T(n)+r} - S_{T(n)} \leq 0}} \\
	E(n) & \coloneqq \Ec{\exp\left( \sum_{r=1}^{t_n-T(n)-1} (\theta_{T(n)+r+1}-\theta_{T(n)+r}) (S_{T(n)+r} - S_{T(n)}) \right)
		\1_{S_{t_n} = S_{T(n)}, \, \forall r < t_n-T(n), S_{T(n)+r} \leq S_{T(n)}}} \\
	\delta_1(n) & \coloneqq \sum_{i \geq i_{T(n)}+1} e^{2\theta_{t_i}} \frac{w_i^2}{W_i^2} \\ 
%	= \sum_{r \geq T(n)+1} e^{2\theta_r} \sum_{i=i_{r-1}+1}^{i_r} \frac{w_i^2}{W_i^2}
	%%
	\delta_2(n) & \coloneqq 
	\sum_{\ell \geq i_{T(n)}+1} e^{2\theta_{t_\ell}} \frac{w_\ell^2}{W_\ell^2} 
	\cdot \exp \left( \sum_{r=T(n)+1}^{t_\ell-1} \theta_r 
	- \sum_{i=i_{T(n)}+1}^{i_{t_\ell-1}} (e^{\theta_{t_i}} -1) \frac{w_i}{W_i} \right).
%	\\
%	& = \sum_{s \geq T(n)+1} e^{2\theta_s} 
%	\exp \left( \sum_{r=T(n)+1}^{s-1} \theta_r 
%	- \sum_{i=i_{T(n)}+1}^{i_{s-1}} (e^{\theta_{t_i}} -1) \frac{w_i}{W_i} \right)
%	\sum_{i=i_{s-1}+1}^{i_s} \frac{w_i^2}{W_i^2}.
	\end{align*}
	If the following series are convergent
	\begin{equation} \label{eq:ass_lower_bound}
	\sum_{t \geq 1} \delta_1(i_t) < \infty
	\qquad \text{and} \qquad \sum_{t \geq 1} \frac{\delta_2(i_t)}{E(i_t)^2} < \infty
	\end{equation}
	then almost surely, for $n$ large enough, $\haut(\ttT_n) \geq t_n - T(n) - 1$.
\end{proposition}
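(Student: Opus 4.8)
The strategy is a second moment argument applied to the auxiliary trees $\ttT_n^{(N)}$, exactly in the spirit of the upper bound proof and of \cite{painsenizergues2022}, but now we need the two-point function to be dominated by the $\ell=1$ term, which only happens after transferring weight to the root. First I would fix a sequence $N_t \to \infty$ growing appropriately with $t$ (to be chosen so that $i_{T(i_t)} \leq N_t$, which makes all vertices labeled $2,\dots,i_{T(i_t)}$ get absorbed into the root of $\ttT^{(N_t)}$; this is why the sums defining $\delta_1,\delta_2,E$ start at index $i_{T(n)}+1$). Then I would work with $Q_n^{(N_t)}$, the weighted count defined as in \eqref{eq:def_Q_n} but for $\ttT_{i_t}^{(N_t)}$, and apply Lemma~\ref{lem:lower_bound_first_moment_Q_n} and Lemma~\ref{lem:second_moment_Q_n} to this tree. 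Since for the modified weight sequence $(w_i^{(N)})_{i\geq1}$ one has $w_i^{(N)}=0$ for $2\le i\le N$, every sum over $i$ or $\ell$ starting at $2$ effectively starts at $N+1 \geq i_{T(n)}+1$, and $w_i^{(N)}/W_i^{(N)} \leq w_i/W_i$; this is what turns the general bounds of Lemma~\ref{lem:second_moment_Q_n} into bounds in terms of $\delta_1(i_t)$, $\delta_2(i_t)$ and $E(i_t)$.

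Concretely: apply Lemma~\ref{lem:second_moment_Q_n} with $n=i_t$ to $\ttT^{(N_t)}$. The first term becomes $\big(\prod (1-p_i^2)^{-1}\big)\exp\big(\sum e^{2\theta_{t_i}} w_i^2/W_i^2\big)\,\E[Q_{i_t}^{(N_t)}]^2$; using $\log(1-x)^{-1}\le x/(1-x)$ and the fact that the relevant indices are $\geq N_t+1$, both prefactors are $1+O(\delta_1(i_t))$, hence $1+o(1)$ once we know $\sum_t \delta_1(i_t)<\infty$. The second ($\ell\geq2$) term is bounded, after factoring out $\exp(\sum (e^{\theta_{t_i}}-1)w_i/W_i - \sum_{r\le t_n}\theta_r)$ which is comparable to $\E[Q_{i_t}^{(N_t)}]$ up to the expectation factor, by a quantity of order $\E[Q_{i_t}^{(N_t)}]^2 \cdot \delta_2(i_t)/E(i_t)^2$ — here one uses Lemma~\ref{lem:lower_bound_first_moment_Q_n} again to write $\E[Q_{i_t}^{(N_t)}] \gtrsim \exp(\dots) E(i_t)$ and identifies $E(i_t)$ with the walk expectation restricted to $[T(i_t),t_n]$ (the walk before time $T(i_t)$ contributes nothing since those $Y_i$ vanish, and the barrier is automatically satisfied there). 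Therefore
\[
\frac{\Var\big(Q_{i_t}^{(N_t)}\big)}{\E\big[Q_{i_t}^{(N_t)}\big]^2} \leq o(1) + C\,\frac{\delta_2(i_t)}{E(i_t)^2},
\]
and under \eqref{eq:ass_lower_bound} the right-hand side is summable in $t$. By Paley–Zygmund (or Chebyshev) and Borel–Cantelli, almost surely for all large $t$ we have $Q_{i_t}^{(N_t)}>0$, which forces $\haut(\ttT_{i_t}^{(N_t)}) \geq t$, hence $\haut(\ttT_{i_t}) \geq \haut(\ttT_{i_t}^{(N_t)}) \geq t$. Finally, to pass from the subsequence $(i_t)$ to all $n$: for $n \in \llbracket i_{t_n-1}+1, i_{t_n}\rrbracket$, one has $\haut(\ttT_n) \geq \haut(\ttT_{i_{t_n-1}}) \cdot \1_{n \geq i_{t_n-1}}$... more carefully, one uses that $\ttT_n$ contains $\ttT_{i_{s}}$ as a subtree for any $i_s \leq n$, so taking $s = t_n - T(n) - 1$ — which satisfies $i_s \leq i_{t_n - T(n)-1} \leq n$ since $T(n)\geq 1$ — gives $\haut(\ttT_n) \geq \haut(\ttT_{i_{t_n-T(n)-1}}) \geq t_n - T(n) - 1$ a.s. for $n$ large; one must check $t_n - T(n) - 1 \to \infty$, which follows from $T(n) < t_n$ together with $t_n\to\infty$ and the monotonicity/growth assumptions, and possibly a mild assumption ensuring $t_n - T(n)$ itself tends to infinity (this is where the hypothesis $T(n)<t_n$ with the gap controlled enters; in the applications $t_n - T(n)$ is of smaller order than $t_n$ but still diverges).

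\textbf{Main obstacle.} The delicate point is the treatment of the $\ell\geq 2$ sum and its identification with $\delta_2(n)/E(n)^2$: in Lemma~\ref{lem:second_moment_Q_n} the $\ell\geq2$ term is bounded by crudely replacing the walk expectation by $1$, so to compare it to $\E[Q_n]^2 \asymp e^{2(\dots)} E(n)^2$ one must carefully match the exponential prefactors — in particular the term $\sum_{r=t_\ell+1}^{t_n}\theta_r$ appearing with a minus sign and the partial sums $\sum_{i=i_{T(n)}+1}^{i_{t_\ell-1}}(e^{\theta_{t_i}}-1)w_i/W_i$ — which is exactly the bookkeeping encoded in the definition of $\delta_2$. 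Getting the ranges of summation to line up (and justifying that contributions from indices $\leq i_{T(n)}$, i.e. $\leq N_t$, are harmless because the corresponding weights vanish in $\ttT^{(N_t)}$) is the part that requires genuine care; everything else is routine once the two moment lemmas are in hand. A secondary technical point is choosing $N_t$: it must grow fast enough that $i_{T(i_t)} \leq N_t$ (so the truncation reaches past the barrier time) yet the bound $w_i^{(N_t)}/W_i^{(N_t)} \leq w_i/W_i$ should not be wasteful — but since that inequality holds for \emph{every} $N$, any valid choice works and one simply takes $N_t = i_{T(i_t)}$.
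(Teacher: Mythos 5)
Your overall strategy is the same as the paper's: work with the auxiliary trees $\ttT^{(N)}$ with $N = i_{T(n)}$, run a second moment argument via Lemma~\ref{lem:lower_bound_first_moment_Q_n} and Lemma~\ref{lem:second_moment_Q_n}, invoke Paley--Zygmund and Borel--Cantelli, and then pass from the subsequence $(i_t)$ to all $n$ by monotonicity. But there is a genuine gap: you apply the moment lemmas to $Q_n^{(N)}$ \emph{using the original sequences $(i_r)$, $(t_i)$, $(\theta_r)$}, and this does not yield the quantities $E(n)$, $\delta_1(n)$, $\delta_2(n)$ from the statement. The key step you are missing is the \emph{shift of the time parametrization}: the paper defines $i_r^{(N)}\coloneqq i_{T(n)+r}$, $\theta_r^{(N)}\coloneqq \theta_{T(n)+r}$, hence $t_i^{(N)}=t_i-T(n)$ for $i>N$, and takes $Q_n^{(N)}$ with respect to these shifted sequences. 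Without the shift, the walk expectation produced by Lemma~\ref{lem:lower_bound_first_moment_Q_n} carries the indicator $\1_{S_{t_n}=0}$, but in the truncated tree $X_r=0$ for $r\le T(n)$, so $S_{T(n)}=-T(n)$; the event $\{S_{t_n}=0\}$ then forces the walk to overshoot by $T(n)$, which is exponentially unlikely and is \emph{not} the event $\{S_{t_n}=S_{T(n)}\}$ defining $E(n)$. Moreover, the term $\sum_{r=1}^{t_n}\theta_r$ in the exponentials does not truncate to $\sum_{r=T(n)+1}^{t_n}\theta_r$ merely because $w_i^{(N)}=0$ for $i\le N$, so after dividing the $\ell\ge 2$ sum by $\E[Q_n^{(N)}]^2$ you would be left with spurious $\theta_r$ contributions that do not match the bookkeeping in $\delta_2(n)$. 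With the shifted sequences all of this works out: $(S_r^{(N)})_{r\ge 0}\overset{(d)}{=}(S_{T(n)+r}-S_{T(n)})_{r\ge 0}$, the walk expectation becomes exactly $E(n)$, and $\sum_{r=1}^{t_n^{(N)}}\theta_r^{(N)}=\sum_{r=T(n)+1}^{t_n}\theta_r$.

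This oversight also breaks your conclusion and your gap-filling step. With the shifted definition, $Q_{i_t}^{(N_t)}>0$ gives $\haut(\ttT_{i_t}^{(N_t)})\ge t_{i_t}^{(N_t)}=t-T(i_t)$, hence $\haut(\ttT_{i_t})\ge t-T(i_t)$, not the stronger $\haut(\ttT_{i_t})\ge t$ that you assert and then rely on. Your gap-filling (taking $s=t_n-T(n)-1$ and using $\haut(\ttT_n)\ge\haut(\ttT_{i_s})\ge s$) presupposes the false bound $\haut(\ttT_{i_s})\ge s$; with the correct $\haut(\ttT_{i_s})\ge s-T(i_s)$ it would give only $\haut(\ttT_n)\ge t_n-T(n)-1-T(i_{t_n-T(n)-1})$, which is weaker than claimed. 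The correct gap-filling argument is simpler and goes through the monotonicity of $T$: for $m\in\llbracket i_t+1,i_{t+1}\rrbracket$ one has $t_m=t+1$ and $T(m)\ge T(i_t)$, so $\haut(\ttT_m)\ge\haut(\ttT_{i_t})\ge t-T(i_t)\ge t_m-T(m)-1$.
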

Before diving into the proof, we provide some insight concerning the quantities appearing above. 
First, $E(n)$ is the expectation involving the walk $S$ appearing in Lemma \ref{lem:lower_bound_first_moment_Q_n}, but for the auxiliary tree $\ttT_n^{(N)}$ with $N = i_{T(n)}$. It always plays a negligible role, but we need a lower bound for it, which is the content of Lemma~\ref{lem:E(n)} established in Section~\ref{sec:E(n)}.
Then, $\delta_1(n)$ and $\delta_2(n)$ are small as a consequence of our assumption on $\sum_{i=n}^\infty w_i^2/W_i^2$.
Moreover, for $\delta_2(n)$, we need the exponential term to be not too large. This amounts to requiring that the quantity
\[
i_t \cdot \exp \left( \sum_{i=2}^{i_t} (e^{\theta_{t_i}} -1) \frac{w_i}{W_i} - \sum_{r=1}^t \theta_r \right)
\]
diverges fast enough.
This has to be compared with the criterion for the upper bound (Proposition~\ref{prop:upper_bound}), where this quantity had to be summable.
Recall that this quantity is approximately $i_t \E[Q_{i_t}]$, which is roughly the number of particles contributing to $Q_{i_t}$.
This number needs to be large so that we have a good concentration in the first and second moment argument.
\begin{proof}[Proof of Proposition~\ref{prop:lower_bound}]
	We write $T = T(n)$ and $N = N(n) = i_T$. 
	We consider the trees $(\ttT_n^{(N)})_{n\geq 1}$ introduced in Section~\ref{sec:some_definitions}. 
	We also define $i_0^{(N)} \coloneqq 1$ and
	\[
		i_r^{(N)} \coloneqq i_{T+r}, \qquad 
		\theta_r^{(N)} \coloneqq \theta_{T+r}, \qquad
		\text{for } r \geq 1.
	\]
	We define $(t_i^{(N)})_{i\geq 1}$ as before, but associated to the sequence $(i_r^{(N)})_{r\geq 1}$. 
%	for $i \geq 1$, $t_i^{(N)}$ is the unique integer $r \geq 0$ such that $i\in \intervalleentier{i_{r-1}^{(N)}+1}{i_{r}^{(N)}}$, with the convention $i_{-1}^{(N)} = 0$.
	One can check that
	\begin{equation}
		t_i^{(N)} = 
		\begin{cases}
		t_i-T, & \text{if } i \geq N+1, \\
		1, & \text{if } 2 \leq i \leq N, \\
		0, & \text{if } i =1.
		\end{cases}
	\end{equation}
	We also define $Q_n^{(N)}$ as in \eqref{eq:def_Q_n} but for the tree $(\ttT_n^{(N)})_{n\geq 1}$ and the sequences $(i_r^{(N)})_{r\geq 0}$ and $(t_i^{(N)})_{i\geq 1}$.
	By construction, we have $\haut(\ttT_n) \geq \haut(\ttT_n^{(N)})$ and therefore,
	\begin{align} \label{eq:cauchy-schwarz}
		\Pp{\haut(\ttT_n) \geq t_n - T}
		\geq \Pp{\haut(\ttT_n^{(N)}) \geq t_n^{(N)}}
		\geq \Pp{Q_n^{(N)} > 0} 
		\geq \frac{\Ec{Q_n^{(N)}}^2}{\Ec{(Q_n^{(N)})^2}},
	\end{align}
	where the last inequality follows from Cauchy--Schwarz inequality.
	
	First assume that $n = i_{t_n}$.
	We apply Lemma~\ref{lem:lower_bound_first_moment_Q_n} and Lemma~\ref{lem:second_moment_Q_n} to $Q_n^{(N)}$, using that $(\ttT_n^{(N)})_{n\geq 1}$ is a weighted recursive tree with weight sequence $(w_n^{(N)})_{n\geq 1}$.
	Recall the definition of $p_i$ and $(S_r)_{r\geq 0}$ in \eqref{eq:def_p_i} and \eqref{eq:def_S_r} and define similarly $p_i^{(N)}$ and $(S_r^{(N)})_{r\geq 0}$ but for the sequences $(w_n^{(N)})_{n\geq 1}$, $(i_r^{(N)})_{r\geq 0}$ and $(\theta_r^{(N)})_{r\geq 1}$.
	Note that, for $2 \leq i \leq N$, $w_i^{(N)} = 0$ and, for $i \geq N+1$,
	$\theta^{(N)}_{t_i^{(N)}} = \theta_{t_i}$ and $w_i^{(N)}/W_i^{(N)} = w_i/W_i$, so we get
	\[
		p_i^{(N)} = 
		\begin{cases}
		p_i, & \text{if } i \geq N+1, \\
		0, & \text{if } 2 \leq i \leq N,
		\end{cases}
		\qquad \text{and} \qquad 
		(S_r^{(N)})_{r\geq 0} \overset{\text{(d)}}{=} (S_{T(n)+r} - S_{T(n)})_{r\geq 0}.
	\]
	We consider $n$ large enough in such a way that $T \geq r_0$, so that $(\theta_r^{(N)})_{r\geq 1}$ is a non-decreasing sequence of non-negative numbers.
	Therefore, Lemma~\ref{lem:lower_bound_first_moment_Q_n} implies
	\begin{align*}
	\E \left[ Q_n^{(N)} \right] 
	& \geq \exp\left( \sum_{i=N+1}^{n} (e^{\theta_{t_i}} -1)\frac{w_i}{W_i} 
	- \sum_{i=N+1}^{n} \frac{e^{2\theta_{t_i}}}{2} \frac{w_i^2}{W_i^2}
	- \sum_{r=T+1}^{t_n} \theta_r \right) 
	\cdot E(n) \\
	& \geq \exp\left( \sum_{i=N+1}^{n} (e^{\theta_{t_i}} -1)\frac{w_i}{W_i} 
	- \sum_{r=T+1}^{t_n} \theta_r \right) 
	\cdot \frac{E(n)}{2},
	\end{align*}
	for $n$ large enough, using that $\sum_{i=N+1}^{n} e^{2\theta_{t_i}} w_i^2/W_i^2 \leq \delta_1(n)$ and $\delta_1(n) \to 0$ as a consequence of \eqref{eq:ass_lower_bound}.
	On the other hand, Lemma~\ref{lem:second_moment_Q_n} yields
	\begin{align*}
	& \Ec{(Q_n^{(N)})^2} \\
	& \leq \left( \prod_{i=N+1}^n (1-p_i^2)^{-1} \right) \cdot
	\exp\left( \sum_{i=N+1}^{n} e^{2\theta_{t_i}} \frac{w_i^2}{W_i^2} \right) \cdot \E \left[ Q_n^{(N)} \right]^2 \\
	& \qquad {}
	+ \exp \left( \sum_{i=N+1}^{n} (e^{\theta_{t_i}} -1) \frac{w_i}{W_i}
	- \sum_{r=T+1}^{t_n} \theta_r \right)
	\cdot \sum_{\ell=N+1}^n \frac{w_\ell^2}{W_\ell^2}  \cdot
	\exp \left( \theta_{t_\ell} 
	+ \sum_{i=\ell+1}^{n} (e^{\theta_{t_i}} -1) \frac{w_i}{W_i}
	- \sum_{r=t_\ell+1}^{t_n} \theta_r \right).
	\end{align*}
	The fact that $\delta_1(n) \to 0$ ensures that $e^{\theta_{t_i}} \frac{w_i}{W_i} \to 0$ as $i \to \infty$ and therefore $p_i \sim e^{\theta_{t_i}} \frac{w_i}{W_i}$.
	Hence, for $n$ large enough (and hence $N$ large enough), we have
	\begin{align*}
	\left( \prod_{i=N+1}^n (1-p_i^2)^{-1} \right) \cdot
	\exp\left( \sum_{i=N+1}^{n} e^{2\theta_{t_i}} \frac{w_i^2}{W_i^2} \right)
	\leq \exp \left( 3\sum_{k=N+1}^{n} e^{2\theta_{t_k}} \frac{w_k^2}{W_k^2} \right)
	\leq  \exp \left( 3\delta_1(n) \right)
	\leq 1 + 4\delta_1(n),
	\end{align*}
	for $n$ large enough, using again that $\delta_1(n) \to 0$.
	Combining what precedes, we get, for $n$ large enough,
	\begin{align*}
	\frac{\Ec{(Q_n^{(N)})^2}}{\Ec{Q_n^{(N)}}^2}
	& \leq 1 + 4\delta_1(n)
	+ \frac{4}{E(n)^2} 
	\sum_{\ell=N+1}^n \frac{w_\ell^2}{W_\ell^2}  \cdot
	\exp \left( \theta_{t_\ell} 
	- \sum_{i=N+1}^{\ell} (e^{\theta_{t_i}} -1) \frac{w_i}{W_i}
	+ \sum_{r=T+1}^{t_\ell} \theta_r \right) \\
	& \leq 1 + 4\delta_1(n) 
	+ \frac{4}{E(n)^2} \cdot \delta_2(n),
	\end{align*}
	bounding $\sum_{i=i_{t_\ell-1}+1}^{\ell} (e^{\theta_{t_i}} -1) \frac{w_i}{W_i} \geq 0$.
	Coming back to \eqref{eq:cauchy-schwarz}, this proves, for $n$ large enough satisfying $n=i_{t_n}$,
	\[
	\Pp{\haut(\ttT_n) \geq t_n - T(n)} 
	\geq \left( 1 + 4\delta_1(n) + \frac{4 \delta_2(n)}{E(n)^2} \right)^{-1}.
	\]
	By assumption \eqref{eq:ass_lower_bound} and the Borel--Cantelli lemma, it follows that almost surely, for $n$ large enough satisfying $n=i_{t_n}$, we have $\haut(\ttT_n) \geq t_n - T(n)$.
	
	We now want to get a lower bound for $\haut(\ttT_m)$ when $m \in \llbracket i_{t_n}+1, i_{t_n+1}-1 \rrbracket$ for some $n=i_{t_n}$. 
	In particular, $t_m = t_n+1$. 
	Since $n \leq m$, we have $\haut(\ttT_m) \geq \haut(\ttT_n)$ and $T(m) \geq T(n)$. 
	Therefore,
	\[
		\haut(\ttT_m) - (t_m-T(m)-1) 
		\geq \haut(\ttT_n) - (t_n-T(n)).
	\]
	The result follows.
\end{proof}

\begin{remark} \label{rem:diameter} Under the assumptions of Proposition \ref{prop:lower_bound}, one can also conclude that almost surely, for $n$ large enough, $\diam(\ttT_n) \geq 2(t_n - T(n) - 1)$, by following the same proof as the one of \cite[Theorem 1.3]{painsenizergues2022}.
Indeed, the key point in this argument is that, in the second moment of $Q_n^{(N)}$, the term $\ell=1$ is the dominant one: this means that two vertices chosen independently according to their weight in $Q_n^{(N)}$ typically have the root as most recent common ancestor.
Noting that the upper bound $\diam(\ttT_n) \geq 2 \haut(\ttT_n)$ is direct, this implies that Theorem \ref{thm:powers_of_log} and Theorem \ref{thm:quickly} also hold for $\diam(\ttT_n)$ by multiplying all the terms in the expansion by 2.  
\end{remark}

%%%%%%%%%%%%%%%%%%%%%%%%%%%%%%%%%%%%%%%%%%%%%%%%%%%%
\subsection{Lower bound for the term \texorpdfstring{$E(n)$}{E(n)}}\label{sec:E(n)} 
%%%%%%%%%%%%%%%%%%%%%%%%%%%%%%%%%%%%%%%%%%%%%%%%%%%%
We conclude this section by stating and proving Lemma~\ref{lem:E(n)} below which provides some lower bound on the term $E(n)$ that appears in the assumptions of Proposition~\ref{prop:lower_bound}. 

\begin{lemma} \label{lem:E(n)}
	Let $(\theta_r)_{r\geq 1}$ be a non-decreasing sequence of non-negative numbers such that, for $t$ large enough, we have $e^{\theta_t}(a_t-a_{t-1})=1$.
	Recall the definition of $E(n)$ from Proposition~\ref{prop:lower_bound}.
	Assume that $T(n) \to \infty$ and $t_n-T(n) \to \infty$ as $n \to \infty$.
	Also assume that 
	\begin{equation} \label{eq:summable}
		\sum_{i\geq 1} e^{2\theta_{t_i}}\frac{w_i^2}{W_i^2} < \infty.
	\end{equation}
	Then, there exists a constant $C > 0$, such that for all $n \geq 1$ satisfying $n =i_{t_n}$ we have
	\begin{enumerate}
	 	\item\label{it:lower bound walk follows barrier} $E(n)\geq \exp(-C (t_n-T(n)))$;
	 	\item\label{it:lower bound walk poisson} 
	 	$\displaystyle E(n)\geq \exp\left(-(\theta_{t_n}-\theta_{T(n)})\cdot (t_n-T(n))^{1/2} - C \log(t_n-T(n))\right)
	 	%- 2\sum_{s=T(n)}^{t_n} \abs{e^{\theta_s}(a_{i_s}-a_{i_{s-1}}) - 1} 
	 	- C \sum_{i=i_{T(n)}+1}^{n} e^{2\theta_{t_i}}\frac{w_i^2}{W_i^2}$.
	\end{enumerate}
\end{lemma}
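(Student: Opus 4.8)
Here is my plan for proving Lemma~\ref{lem:E(n)}.

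The strategy for both items is to bound $E(n)$ from below by keeping, inside the expectation defining it, only a conveniently chosen event on which one can simultaneously control the exponential weight and estimate the probability. Write $m\coloneqq t_n-T(n)$; recall from the proof of Proposition~\ref{prop:lower_bound} that the walk entering $E(n)$ may be taken to be $\widehat S_r\coloneqq S_{T(n)+r}-S_{T(n)}$ for $0\le r\le m$, a random walk with independent increments $\widehat X_r-1$, where $\widehat X_r=\sum_{i=i_{T(n)+r-1}+1}^{i_{T(n)+r}}Y_i$ and the $Y_i$ are independent $\mathrm{Bernoulli}(p_i)$. From the elementary bounds $e^{\theta_{t_i}}\frac{w_i}{W_i}-e^{2\theta_{t_i}}\frac{w_i^2}{W_i^2}\le p_i\le e^{\theta_{t_i}}\frac{w_i}{W_i}$, together with the hypothesis on $\theta$ (which in the notation of Section~\ref{sec:strategy_upper_bound} reads $e^{\theta_\rho}(a_{i_\rho}-a_{i_{\rho-1}})=1$ for $\rho$ large), one checks that for $n$ large each increment satisfies $\E[\widehat X_r]\in[1-b_r,\,1]$ with $b_r=\sum_{i=i_{T(n)+r-1}+1}^{i_{T(n)+r}}e^{2\theta_{t_i}}\frac{w_i^2}{W_i^2}$ and $\sum_r b_r\le\delta\coloneqq\sum_{i=i_{T(n)}+1}^{n}e^{2\theta_{t_i}}\frac{w_i^2}{W_i^2}$, while $\Var(\widehat X_r)\to1$ as $r\to\infty$. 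Since $T(n)\to\infty$ and $m\to\infty$, all blocks involved are far out, so the increments of $\widehat S$ are uniformly nearly centred, with variance bounded away from $0$ and $\infty$ and with uniformly small atoms, and $\delta\to0$ by~\eqref{eq:summable}.

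For item~\ref{it:lower bound walk follows barrier}, I would restrict the expectation to the event $\{\widehat X_1=\dots=\widehat X_m=1\}$: there $\widehat S_r=0$ for every $r$, so every constraint in $E(n)$ holds (in particular $S_{t_n}=S_{T(n)}$) and, crucially, the exponential weight equals $1$. Hence
\[
E(n)\ \ge\ \prod_{r=1}^{m}\P(\widehat X_r=1)\ \ge\ c^{\,m}
\]
for some constant $c\in(0,1)$, since $\P(\widehat X_r=1)\to e^{-1}$ as $r\to\infty$ (a Poisson--binomial sum with total parameter tending to $1$); decreasing $c$ if necessary handles the finitely many remaining blocks, which gives $E(n)\ge\exp(-C(t_n-T(n)))$.

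For item~\ref{it:lower bound walk poisson}, I would instead restrict to the band event $A\coloneqq\{\widehat S_m=0\}\cap\{-\sqrt m\le\widehat S_r\le 0\ \text{for all}\ 0\le r<m\}$. On $A$ every $\widehat S_r$ lies in $[-\sqrt m,0]$ and $(\theta_r)$ is non-decreasing, so
\[
\exp\Big(\sum_{r=1}^{m-1}(\theta_{T(n)+r+1}-\theta_{T(n)+r})\,\widehat S_r\Big)\ \ge\ \exp\big(-\sqrt m\,(\theta_{T(n)+m}-\theta_{T(n)+1})\big)\ \ge\ \exp\big(-\sqrt m\,(\theta_{t_n}-\theta_{T(n)})\big),
\]
so that $E(n)\ge\exp\big(-\sqrt m\,(\theta_{t_n}-\theta_{T(n)})\big)\cdot\P(A)$. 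It then remains to prove $\P(A)\ge c\,m^{-3/2}-C\delta$. This is a random-walk excursion estimate: after comparing $\widehat S$ with a genuinely centred random walk (equivalently, replacing $p_i$ by the ``ideal'' value $e^{\theta_{t_i}}w_i/W_i$), which costs an additive error of order $\delta$ because of the slight subcriticality of the increments, one has $\P(\widehat S_m=0)\asymp m^{-1/2}$ by the local central limit theorem, the one-sided constraint $\widehat S_r\le 0$ costs a further factor $\asymp m^{-1}$ by the reflection principle, and keeping the walk above $-\sqrt m$ costs only a constant factor, since the rescaled conditioned bridge converges to a Brownian excursion, whose maximum has a density and so stays below one rescaled unit with positive probability. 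Combining this with $\exp(-\sqrt m(\theta_{t_n}-\theta_{T(n)}))\le 1$ gives
\[
E(n)\ \ge\ \exp\big(-\sqrt m\,(\theta_{t_n}-\theta_{T(n)})\big)\big(c\,m^{-3/2}-C\delta\big)\ \ge\ \exp\big(-\sqrt m\,(\theta_{t_n}-\theta_{T(n)})-C\log m\big)-C\delta,
\]
which is the claimed bound (after enlarging $C$ and treating small $n$ separately).

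I expect the technical heart, and the main obstacle, to be the excursion estimate used for item~\ref{it:lower bound walk poisson}: the lower barrier must be placed \emph{exactly} at height $-\sqrt m$, because the coefficient of $\sqrt m\,(\theta_{t_n}-\theta_{T(n)})$ in the statement is $1$ and tolerates no constant multiple of $\sqrt m$ for the barrier; and one must obtain the $\asymp m^{-3/2}$ lower bound with constants uniform over the non-identically-distributed blocks, while absorbing the slight subcriticality $\E[\widehat X_r]\le 1$ of the increments into the additive error $\delta=\sum_{i=i_{T(n)}+1}^{n}e^{2\theta_{t_i}}w_i^2/W_i^2$ rather than letting it degrade the main term. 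Item~\ref{it:lower bound walk follows barrier}, in contrast, is essentially immediate once one observes that pinning the walk at $0$ makes the exponential weight disappear.
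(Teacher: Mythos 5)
Your proposal for item~\ref{it:lower bound walk follows barrier} is essentially identical to the paper's: restrict to the event where all increments $X_{T(n)+r}$ equal $1$, so the exponential weight vanishes, and bound $\prod_r\P(X_{T(n)+r}=1)$ from below by a positive constant to the power $t_n-T(n)$ using $\sum p_i\to 1$.

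For item~\ref{it:lower bound walk poisson} your overall plan is also the paper's: restrict to the band event $\{S_{t_n}=S_{T(n)}\}\cap\{-\sqrt m\le S_{T(n)+r}-S_{T(n)}\le 0\}$ with $m=t_n-T(n)$, bound the exponential weight below by $\exp(-\sqrt m\,(\theta_{t_n}-\theta_{T(n)}))$, and then estimate the probability of the band event as $\gtrsim m^{-3/2}$ with an additive error $O(\delta)$. The difference is in how the probability estimate is carried out. The paper first uses the coupling lemma~A.1 of \cite{painsenizergues2022} to replace $(S_{T(n)+r}-S_{T(n)})_r$ by a single, time-homogeneous random walk with $\mathrm{Poisson}(1)-1$ increments, at an additive cost $O(\delta)$ in probability; this step simultaneously absorbs the subcriticality of the means and the block-to-block inhomogeneity. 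It can then cite i.i.d.\ results directly: \cite[Eq.\ (4.6)]{caravennachaumont2013} for the $\sim C_1 m^{-3/2}$ asymptotics of $\P(\widehat S_m=0,\ \forall r<m\,\widehat S_r\le 0)$, and \cite[Cor.\ 2.5]{caravennachaumont2013} for the conditional convergence to a Brownian excursion, yielding the extra constant factor from the lower barrier at $-\sqrt m$. Your proposal replaces $p_i$ by $e^{\theta_{t_i}}w_i/W_i$ to centre the means, but the resulting walk is still an inhomogeneous Poisson-binomial walk across blocks, so invoking ``local CLT $+$ reflection $+$ Brownian excursion'' would require uniform, time-inhomogeneous versions of those facts --- this is exactly the obstruction you flag at the end, and it is what the Poisson coupling eliminates in one stroke. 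So your route is workable in principle, but the missing ingredient (a clean reduction to a homogeneous walk) is precisely the paper's one additional step.
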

Note that it need not be the case that the second estimate is better than the first one as it depends on the rate of growth of $\theta_t$.
\begin{proof}
The two lower bounds are obtained in the same fashion, by identifying an event for the walk $(S_{T(n)+r} - S_{T(n)})_{r \geq 0}$ for which we can give a lower bound for the exponential term in the expectation. 
Recall the definition of $p_i$ from~\eqref{eq:def_p_i}.
%Suppose we are working with a choice of sequences $(i_k)$ and $(\theta_t)$ such that $\theta_t\rightarrow \infty$ and $e^{\theta_{t_i}}\frac{w_i}{W_i}\rightarrow 0$ and $\sum e^{2\theta_{t_i}}\frac{w_i^2}{W_i^2}<\infty$.
It is easy to check that we have
\begin{equation} \label{eq:estimates_p_i}
	p_i= e^{\theta_{t_i}} \frac{w_i}{W_i}+ \grandO{e^{2\theta_{t_i}}\frac{w_i^2}{W_i^2}} 
	\qquad \text{ and } \qquad 
	p_i^2= \grandO{e^{2\theta_{t_i}}\frac{w_i^2}{W_i^2}},
\end{equation}
using in particular that $e^{\theta_{t_i}}\frac{w_i}{W_i}\rightarrow 0$ as a consequence of assumption \eqref{eq:summable}.

We start by proving \ref{it:lower bound walk follows barrier}. 
Noting that on the event $\{ \forall  r \leq t_n-T(n),\ S_{T(n)+r}=S_{T(n)} \}$ the exponential term in $E(n)$ equals $1$, we get
\begin{align}\label{eq:E(n) greater than product probability Xt equal 1}
	E(n) 
	%& = \Ec{\exp\left( \sum_{r=1}^{t_n-T(n)-1} (\theta_{T(n)+r+1}-\theta_{T(n)+r}) (S_{T(n)+r} - S_{T(n)}) \right) \1_{S_{t_n} = S_{T(n)}, \, \forall r < t_n-T(n), S_{T(n)+r} \leq S_{T(n)}}} \\
	& \geq \Pp{\forall  r \leq t_n-T(n),\ S_{T(n)+r}=S_{T(n)} }
	= \prod_{r=1}^{t_n - T(n)}\Pp{X_{T(n)+r}=1}.
\end{align}
Then, for any $t \geq 1$, recalling that $X_t = \sum_{i=i_{t-1}+1}^{i_t}Y_i$, where the $Y_i$'s are independent Bernoulli($p_i)$ r.v.\@, we have 
\begin{align}
	\Pp{X_t=1}
	& = \sum_{i=i_{t-1}+1}^{i_t} p_i \underset{j\neq i}{\prod_{j=i_{t-1}+1}^{i_t}}(1-p_j) 
	\geq \left(\sum_{i=i_{t-1}+1}^{i_t} p_i\right)\cdot   \left(\prod_{i=i_{t-1}+1}^{i_t}(1-p_i)\right).
	\label{eq:lower_bound_X_t}
%	\\
%	%%
%	& \geq \left(\sum_{i=i_{t-1}+1}^{i_t} p_i\right) \cdot \exp\left(- \left(\sum_{i=i_{t-1}+1}^{i_t}p_i\right)-\frac{1}{2}\left(\sum_{i=i_{t-1}+1}^{i_t}p_i^2\right)\right)\\
%	& \geq \left(\sum_{i=i_{t-1}+1}^{i_t} p_i\right) \cdot \exp\left(- \left(\sum_{i=i_{t-1}+1}^{i_t}p_i\right)-\frac{1}{2}\left(\sum_{i=i_{t-1}+1}^{i_t}p_i\right)^2\right).
\end{align}
Using \eqref{eq:estimates_p_i} and then \eqref{eq:summable}, we get that as $t\rightarrow\infty$
\begin{align*}
	\sum_{i=i_{t-1}+1}^{i_t}p_i =  \sum_{i=i_{t-1}+1}^{i_t} e^{\theta_{t}}\frac{w_i}{W_i}+ \sum_{i=i_{t-1}+1}^{i_t} \grandO{e^{2\theta_{t}}\frac{w_i^2}{W_i^2}}= e^{\theta_{t}}(a_t-a_{t-1}) + o(1) =1+ \petito{1},
\end{align*}
using that for $t$ large enough $e^{\theta_t}(a_t-a_{t-1})=1$.
Moreover, using the fact that $p_i \to 0$, the Taylor expansion $\log(1-x) \underset{x\rightarrow 0}{=} -x + \grandO{x^2}$ and the previous display we get
\begin{align*}
\left(\prod_{i=i_{t-1}+1}^{i_t}(1-p_i)\right)
= \exp\left(-\left(\sum_{i=i_{t-1}+1}^{i_t}p_i\right) 
+\grandO{\sum_{i=i_{t-1}+1}^{i_t}p_i^2}\right)
= \exp(-1+o(1)).
\end{align*}
Coming back to \eqref{eq:lower_bound_X_t}, this shows that $\Pp{X_t=1}$ is bounded below by some positive value for $t$ sufficiently large. 
Plugging this back into \eqref{eq:E(n) greater than product probability Xt equal 1}, we get part \ref{it:lower bound walk follows barrier} of the lemma.

In order to prove \ref{it:lower bound walk poisson}, we first compare $(S_r)$ with a time-homogeneous random walk $(\widehat{S}_r)$ that has $\mathrm{Poisson}(1)-1$ increments. 
Applying \cite[Lemma~A.1]{painsenizergues2022} to our case, we can couple $(S_r)$ and $(\widehat{S}_r)$ in such a way that 
\begin{align*}
\Pp{\exists r \in \intervalleentier{0}{t_n-T(n)}, \ S_{T(n)+r} - S_{T(n)} \neq  \widehat{S}_r} &\leq 2 \sum_{t=T(n)+1}^{t_n} \abs{\sum_{i=i_{t-1}+1}^{i_t} p_i -1} + 2 \sum_{i=i_{T(n)}+1}^{n} p_i^2\\
&\leq 2 \sum_{s=T(n)+1}^{t_n} \abs{e^{\theta_s}(a_{i_s}-a_{i_{s-1}}) - 1} + C \sum_{i=i_{T(n)}+1}^{n} e^{2\theta_{t_i}}\frac{w_i^2}{W_i^2},
\end{align*}
where the first sum on the right-hand side is zero for $n$ large enough.
Then, setting $r_n = t_n-T(n)$ for brevity, it is enough to prove that
\begin{equation} \label{eq:new_goal}
\Ec{\exp\left( \sum_{r=1}^{r_n-1} (\theta_{T(n)+r+1}-\theta_{T(n)+r}) \widehat{S}_r \right)
	\1_{\widehat{S}_{r_n} = 0, \, \forall r < r_n, \widehat{S}_r \leq 0}} 
\geq \exp\left(-(\theta_{t_n}-\theta_{T(n)})\cdot r_n^{1/2} - C \log r_n \right).
\end{equation}
For this, we will prove that 
\begin{equation} \label{eq:new_goal2}
\liminf_{n\to\infty} r_n^{3/2} \cdot \Pp{\widehat{S}_{r_n} = 0 \ \text{ and }\  \forall r < r_n,\ -r_n^{1/2} \leq \widehat{S}_r \leq 0} >0,
\end{equation}
which implies \eqref{eq:new_goal} by restricting ourselves to this event and bounding $\widehat{S}_r \geq -r_n^{1/2}$ inside the exponential.
We now prove \eqref{eq:new_goal2}.
First, according to \cite[Equation (4.6)]{caravennachaumont2013} (note that the probability below equals $q_{r_n}^+(0,0)$ for the walk $-\widehat{S}$ with their notation), we have, for some constant $C_1 > 0$, as $n \to \infty$,
\begin{equation} \label{eq:RW1}
\Pp{\widehat{S}_{r_n} = 0 \ \text{ and }\  \forall r < r_n,\  \widehat{S}_r \leq 0} 
\sim \frac{C_1}{r_n} \cdot \Pp{\widehat{S}_{r_n} = 0}
\sim \frac{C_1}{\sqrt{2\pi} \cdot r_n^{3/2}}.
\end{equation}
Now, let $(\widehat{\mathbf{S}}_u^{(n)})_{u\in[0,1]}$ denotes the process defined by linear interpolation of the points $(r/r_n,\widehat{S}_r/r_n^{1/2} )$ for $0 \leq r \leq r_n$.
Then, \cite[Corollary 2.5]{caravennachaumont2013} proves that the process $-\widehat{\mathbf{S}}^{(n)}$, conditionally on the event $\{ \widehat{S}_{r_n} = 0 \ \text{ and }\  \forall r < r_n,\ -r_n^{1/2} \leq \widehat{S_{r}} \leq 0\}$, converges in distribution in $(C([0,1],\R),\lVert \cdot \rVert_\infty)$ toward the normalized Brownian excursion $(\mathbf{e}_u)_{u\in \intervalleff{0}{1}}$.
In particular,
\begin{equation} \label{eq:RW2}
\Ppsq{\forall r < r_n,\ \widehat{S}_r \geq -r_n^{1/2}}
{\widehat{S}_{r_n} = 0 \ \text{ and }\  \forall r < r_n,\  \widehat{S}_r \leq 0} 
\xrightarrow[n\to\infty]{} 
\Pp{ \max_{u\in[0,1]} \mathbf{e}_u \leq 1} > 0,
\end{equation}
using that the map $f \in C([0,1],\R) \mapsto \max_{[0,1]} f$ is continuous.
Combining \eqref{eq:RW1} and \eqref{eq:RW2} shows \eqref{eq:new_goal2} and this concludes the proof. 
\end{proof}

%%%%%%%%%%%%%%%%%%%%%%%%%%%%%%%%%%%%%%%%%%%%%%%%%%%%%%%%%%%%%%%%%%%%
\section{Application to different regimes for the weight sequence}\label{sec:proof of the main results}
%%%%%%%%%%%%%%%%%%%%%%%%%%%%%%%%%%%%%%%%%%%%%%%%%%%%%%%%%%%%%%%%%%%%

In this section, we apply the general results of Section~\ref{sec:upper-bound} and Section~\ref{sec:lower-bound} to specific regimes for the weight sequence.
We first present the general ideas and then study the different regimes separately.

%%%%%%%%%%%%%%%%%%%%%%%%%%%%%%%%%%%%%%%%%%%%%%%%%%%%
\subsection{Strategy}\label{sec:strategy_main_result}
%%%%%%%%%%%%%%%%%%%%%%%%%%%%%%%%%%%%%%%%%%%%%%%%%%%%

The proofs of Theorem~\ref{thm:powers_of_log} and Theorem~\ref{thm:quickly} follow from applying Proposition~\ref{prop:upper_bound} and Proposition~\ref{prop:lower_bound} to well-chosen sequences $(i_t)_{t\geq 0}$ and $(\theta_t)_{t\geq 1}$. 
As discussed before, $(i_t)_{t\geq 0}$ should be chosen such that $\haut(\ttT_{i_t})$ is either slightly less (upper bound) or slightly more (lower bound) than $t$.
For the reasons explained in Section~\ref{sec:strategy_upper_bound}, we will always choose $\theta_t = - \log(a_{i_t} - a_{i_{t-1}})$ for $t$ large enough.
Then, the proof is mainly computational. We get an asymptotic expansion for $\theta_t$ and use it to check the assumptions of Proposition~\ref{prop:upper_bound} and Proposition~\ref{prop:lower_bound}.
In particular, this expansion has to be precise enough to ensure that the sequence $(\theta_t)$ is eventually non-decreasing.

Choosing $(i_t)_{t\geq 0}$ is not difficult if one already knows the expansion of $\haut(\ttT_n)$ that one tries to prove.
However, it may not be immediate to find a good conjecture for the terms in this expansion in the first place so we explain here how we guessed the behavior of $\haut(\ttT_n)$ in the different cases that we studied.
For Theorem~\ref{thm:powers_of_log}, when $\alpha \leq 1$, we started with the crude upper and lower bounds presented in Section~\ref{sec:crude_bounds}, which match at the first order.
This first order implies the choice $i_t = \exp(\alpha t \log t (1+o(1)))$, but this is already enough to deduce the expansion \eqref{eq:expansion_theta_t_1} for $\theta_t$.
Then, recall we want \eqref{eq:ass_upper_bound} to be summable for the upper bound and sufficiently quickly divergent for the lower bound, but to get precise bounds we want the first orders to cancel out.
Hence the precise definition of $i_t$ in \eqref{eq:def_ell_1} and \eqref{eq:def_i_t_1} includes exactly the right terms to cancel those coming from the exponential in \eqref{eq:ass_upper_bound}.
When $\alpha > 1$, the crude upper bound does not give the correct first order, but we simply used the same definition for $i_t$.
Also note that, as claimed in Remark \ref{rem:comparison_iid}, one could get the next orders in $\haut(\ttT_n)$.
Indeed, this precise definition of $i_t$ implies a more precise expansion for $\theta_t$, so plugging it in \eqref{eq:ass_upper_bound}, one can find what the next orders in the definition of $i_t$ should be, and so on.

For Theorem~\ref{thm:quickly}, we trusted the first order given by the crude lower bound given in Section~\ref{sec:crude lower bound} to find the definition of $i_t$. 
Indeed, it is rather intuitive that the method used for this crude lower bound should work better when $w_n$ tends faster to $0$.
Hence, since it is yielding the right first order for Theorem~\ref{thm:powers_of_log}, it had to be also correct for Theorem~\ref{thm:quickly}.

%%%%%%%%%%%%%%%%%%%%%%%%%%%%%%%%%%%%%%%%%%%%%%%%%%%%
\subsection{Variance varying like powers of \texorpdfstring{$\log n$}{log n}}
%%%%%%%%%%%%%%%%%%%%%%%%%%%%%%%%%%%%%%%%%%%%%%%%%%%%

Our aim in this section is to prove Theorem~\ref{thm:powers_of_log}. We work under assumption \eqref{eq:assumption_a_n_1} and \eqref{eq:ass_somme_carre}.
We first define and study a sequence $\ell_t$, which is then used to set $i_t \simeq \exp(\ell_t)$ later.
%We first state and prove auxiliary results Lemma~\ref{lem:def_ell_t}, Lemma~\ref{lem:theta_t_1} and Lemma~\ref{lem:estimation_key_quantity_1} that allow us to define sequences $(i_t)$ and $(\theta_t)$ that have specific properties. 
%The proof Theorem~\ref{thm:powers_of_log} then follows from applying Proposition~\ref{prop:upper_bound} and Proposition~\ref{prop:lower_bound} to those sequences.
%%
\begin{lemma} \label{lem:def_ell_t}
	Let $\eta \in \R$. 
	For $t \geq 2$, we define 
	\begin{equation} \label{eq:def_ell_1}
	\ell_t \coloneqq 
	\alpha t \log t - (1-\alpha) t \log \log t - t \log L(\alpha t \log t) - t \left[ 1+\alpha+(1-\alpha)\log \alpha +\eta \right].
	\end{equation}
	Then, we have the following asymptotics, as $t \to \infty$,
	\begin{enumerate}
		\item\label{it:ell} $\ell_t \sim \alpha t \log t$;
		\item\label{it:ell_diff} $\ell_t - \ell_{t-1} \sim \alpha \log t$;
		\item\label{it:log_ell_diff} $\log \ell_t - \log \ell_{t-1} \sim \frac{1}{t}$;
		\item\label{it:log_ell_diff_diff} $\log(\ell_{t+1} - \ell_t) - \log(\ell_t - \ell_{t-1}) = o \left( \frac{1}{t} \right)$.
	\end{enumerate}
\end{lemma}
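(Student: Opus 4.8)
The plan is to extend $\ell_t$ to a twice continuously differentiable function $\Lambda$ of a real variable and to read off all four estimates from the asymptotics of $\Lambda$, $\Lambda'$ and $\Lambda''$. Concretely, set $c\coloneqq 1+\alpha+(1-\alpha)\log\alpha+\eta$ and $m(x)\coloneqq\alpha x\log x$; for all real $x$ large enough (so that $m(x)\ge 1$ and $L$ is twice differentiable at $m(x)$) the right-hand side of \eqref{eq:def_ell_1} makes sense, and I call it $\Lambda(x)$, so $\Lambda(t)=\ell_t$ for integer $t$ large. Write $g(x)\coloneqq\log L(m(x))$, so that by the chain rule $g'(x)=\tfrac{L'(m(x))}{L(m(x))}m'(x)$ and $g''(x)=\bigl(\tfrac{L''}{L}-(\tfrac{L'}{L})^2\bigr)(m(x))\,m'(x)^2+\tfrac{L'(m(x))}{L(m(x))}m''(x)$, with $m'(x)=\alpha(\log x+1)$ and $m''(x)=\alpha/x$. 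Assumption $(\mathrm{SV}_2)$ says precisely that $uL'(u)/L(u)\to 0$ and $u^2L''(u)/L(u)\to 0$; since $m(x)\to\infty$, substituting $u=m(x)$ and using $m'/m\sim 1/x$ and $m''/m=1/(x^2\log x)$ gives $g'(x)=o(1/x)$ and $g''(x)=o(1/x^2)$, hence $\tfrac{\diff^2}{\diff x^2}\bigl(xg(x)\bigr)=2g'(x)+xg''(x)=o(1/x)$. Also $(\mathrm{SV}_2)$ implies $L$ is slowly varying, and every slowly varying function satisfies $\log L(u)=o(\log u)$, so $g(x)=o(\log x)$ and $xg'(x)=o(1)$. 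Feeding this into $\Lambda$ and its derivatives yields $\Lambda(x)\sim\alpha x\log x$, $\Lambda'(x)=\alpha(\log x+1)-(1-\alpha)\bigl(\log\log x+\tfrac{1}{\log x}\bigr)-g(x)-xg'(x)-c\sim\alpha\log x$, and $\Lambda''(x)=\tfrac{\alpha}{x}-(1-\alpha)\bigl(\tfrac{1}{x\log x}-\tfrac{1}{x(\log x)^2}\bigr)-\tfrac{\diff^2}{\diff x^2}(xg(x))=\tfrac{\alpha}{x}+o(1/x)$; in particular $\Lambda$ and $\Lambda'$ are eventually positive and $|\Lambda''(x)|\le C/x$ for $x$ large.

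With these facts, items \ref{it:ell}, \ref{it:ell_diff} and \ref{it:log_ell_diff} are essentially immediate. Item \ref{it:ell} is just $\ell_t=\Lambda(t)\sim\alpha t\log t$. For \ref{it:ell_diff} I would apply the mean value theorem: $\ell_t-\ell_{t-1}=\Lambda'(\xi_t)$ for some $\xi_t\in(t-1,t)$, and $\Lambda'(\xi_t)\sim\alpha\log\xi_t\sim\alpha\log t$. For \ref{it:log_ell_diff} the mean value theorem applied to $\log\Lambda$ gives $\log\ell_t-\log\ell_{t-1}=\Lambda'(\xi_t)/\Lambda(\xi_t)$ for some $\xi_t\in(t-1,t)$, and $\Lambda'(\xi_t)/\Lambda(\xi_t)\sim\frac{\alpha\log\xi_t}{\alpha\xi_t\log\xi_t}=1/\xi_t\sim 1/t$.

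The last estimate \ref{it:log_ell_diff_diff} is the delicate one; the key observation is that it only requires controlling the second difference of $\Lambda$ to precision $O(1/t)$ rather than pinning down a constant. Set $d_t\coloneqq\ell_t-\ell_{t-1}=\Lambda(t)-\Lambda(t-1)$, which is $\sim\alpha\log t$ by \ref{it:ell_diff}, hence positive and $\ge\tfrac{\alpha}{2}\log t$ for $t$ large. The integral form of Taylor's formula gives
\[
d_{t+1}-d_t=\Lambda(t+1)-2\Lambda(t)+\Lambda(t-1)=\int_0^1\!\!\int_0^1\Lambda''(t-1+u+v)\,\diff u\,\diff v,
\]
so $|d_{t+1}-d_t|\le\sup_{x\in[t-1,t+1]}|\Lambda''(x)|\le C/(t-1)=O(1/t)$. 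Then, using the elementary inequality $|\log a-\log b|\le|a-b|/\min(a,b)$ valid for $a,b>0$,
\[
\bigl|\log(\ell_{t+1}-\ell_t)-\log(\ell_t-\ell_{t-1})\bigr|=|\log d_{t+1}-\log d_t|\le\frac{|d_{t+1}-d_t|}{\min(d_t,d_{t+1})}=O\!\left(\frac{1}{t\log t}\right)=o\!\left(\frac{1}{t}\right),
\]
which is the claim. The only genuinely non-routine ingredient is the estimate $\tfrac{\diff^2}{\diff x^2}\bigl(x\log L(\alpha x\log x)\bigr)=o(1/x)$, and this is exactly where the second-order condition in $(\mathrm{SV}_2)$ is needed rather than merely $(\mathrm{SV}_1)$: with only $(\mathrm{SV}_1)$ the function $x\log L(\alpha x\log x)$ need not be twice differentiable, and the corresponding control of its second difference — and hence \ref{it:log_ell_diff_diff} — would break down.
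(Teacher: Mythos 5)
Your argument is correct, and it takes a genuinely different route from the paper's. The paper works entirely with discrete differences: for part~\ref{it:ell_diff} it isolates the $L$-contribution, uses Lemma~\ref{lem:slowly_varying}\ref{it:slowly_varying_2} (via the auxiliary $\widetilde{L}(x)=\exp(xL'(x)/L(x))$) to expand $\log L(\alpha t\log t)-\log L(\alpha(t-1)\log(t-1))$ to precision $o(1/t)$, and records the resulting sharp expansion~\eqref{eq:ell_diff}; for part~\ref{it:log_ell_diff_diff} it then writes $\ell_t-\ell_{t-1}=f(t)+g(t)+O(\cdot)$ and compares $\log(f(t+1)/f(t))$ with the increments of $g/f$ by hand. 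You instead extend $t\mapsto\ell_t$ to a $C^2$ function $\Lambda$ of a real argument, convert the two conditions in $(\mathrm{SV}_2)$ directly into $\Lambda''(x)=\alpha/x+o(1/x)$ (the chain-rule bookkeeping giving $g'=o(1/x)$, $g''=o(1/x^2)$ and hence $2g'+xg''=o(1/x)$ is correct), and read off all four items from the mean value theorem and the second-difference bound $\lvert\Lambda(t+1)-2\Lambda(t)+\Lambda(t-1)\rvert\le\sup_{[t-1,t+1]}\lvert\Lambda''\rvert$. This is cleaner and more conceptual: it avoids introducing $\widetilde{L}$ altogether, and for part~\ref{it:log_ell_diff_diff} it bypasses the paper's delicate decomposition and in fact yields the slightly sharper rate $O(1/(t\log t))$. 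The only thing the paper's computation gives that yours does not is the explicit second-order expansion~\eqref{eq:ell_diff} of $\ell_t-\ell_{t-1}$; but that is used only internally within the proof of this lemma and is not part of the stated conclusions, so your argument is a complete proof.
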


\begin{proof}
	Part \ref{it:ell} follows directly from the fact that $\log L(x) = o(\log x)$ (see Lemma~\ref{lem:slowly_varying}\ref{it:slowly_varying_0}).
	
	For Part \ref{it:ell_diff}, we prove a more precise expansion that will be useful for Part \ref{it:log_ell_diff_diff}.
	We introduce $\tilde{\ell}_t \coloneqq \ell_t + t \log L(\alpha t \log t)$. One can check through explicit calculation that 
	\begin{align} \label{eq:ell_tilde}
	\tilde{\ell}_t - \tilde{\ell}_{t-1} = \alpha \log t - (1-\alpha) \log \log t 
	- \left[ 1+(1-\alpha)\log \alpha +\eta \right] - \frac{1-\alpha}{\log t} + O \left(\frac{1}{t} \right),
	\end{align} 
	so it remains to deal with the part involving $L$.
	We decompose
	\begin{align} 
	& t \log L(\alpha t \log t) - (t-1) \log L(\alpha (t-1) \log (t-1)) \nonumber  \\
	& = \log L(\alpha t \log t) + (t-1) \left[ \log L(\alpha t \log t) - \log L(\alpha (t-1) \log (t-1)) \right]. \label{eq:diff_L}
	\end{align} 
	Then, using Lemma~\ref{lem:slowly_varying}\ref{it:slowly_varying_2}, the function $\widetilde{L}(x) = \exp(xL'(x)/L(x))$ satisfies \hyperref[hyp_SV_k]{$(\mathrm{SV}_1)$} and we have
	\begin{align}
	& \log L(\alpha t \log t) - \log L(\alpha (t-1) \log (t-1)) \nonumber \\
	& = \log \widetilde{L}(\alpha t \log t)
	\cdot \log \left( \frac{t \log t}{(t-1) \log (t-1)} \right)
	+ \petito{\log^2 \left( \frac{t \log t}{(t-1) \log (t-1)} \right)} \nonumber \\
	& = \log \widetilde{L}(\alpha t \log t) \cdot
	\left[ \frac{1}{t} + \frac{1}{t \log t} + O\left( \frac{1}{t^2} \right) \right]
	+ \petito{\frac{1}{t^2}}.
	\label{eq:diff_log_L}
	\end{align}
	Hence, combining this with \eqref{eq:ell_tilde} and \eqref{eq:diff_L}, we proved
	\begin{align} \label{eq:ell_diff}
	\begin{split}
	\ell_t - \ell_{t-1} 
	& = \alpha \log t - (1-\alpha) \log \log t 
	- \log L(\alpha t \log t)
	- \log \widetilde{L}(\alpha t \log t) \cdot \left[ 1 + \frac{1}{\log t} 
	+ O \left(\frac{1}{t} \right) \right] \\
	& \quad {} - \left[ 1+(1-\alpha)\log \alpha +\eta \right] - \frac{1-\alpha}{\log t} 
	+ O \left(\frac{1}{t} \right).
	\end{split}
	\end{align} 
	Using $\log L(x) = o(\log x)$ and $\log \widetilde{L}(x) = o(\log x)$, this proves $\ell_t - \ell_{t-1} \sim \alpha \log t$.
	
	For Part \ref{it:log_ell_diff}, we simply have
	\[
	\log \ell_t - \log \ell_{t-1} 
	= \log \left( 1 - \frac{\ell_t - \ell_{t-1}}{\ell_{t-1}}\right) 
	\sim \frac{\ell_t - \ell_{t-1}}{\ell_{t-1}}
	\sim \frac{1}{t},
	\]
	using Parts \ref{it:ell} and \ref{it:ell_diff}.
	
	For Part \ref{it:log_ell_diff_diff}, we introduce $f(t) \coloneqq \alpha \log t - (1-\alpha) \log \log t - \left[ 1+(1-\alpha)\log \alpha +\eta \right] - \frac{1-\alpha}{\log t}$ and 
	$g(t) = \log L(\alpha t \log t)	- \log \widetilde{L}(\alpha t \log t) \cdot [1 + \frac{1}{\log t}]$, so that \eqref{eq:ell_diff} yields
	\begin{align*}
		\log(\ell_t - \ell_{t-1})
		= \log \left( f(t) +g(t) + O \left(\frac{1 + \log \widetilde{L}(\alpha t \log t)}{t} \right) \right)
		= \log f(t) 
		+ \log \left( 1 + \frac{g(t)}{f(t)} + o \left( \frac{1}{t} \right) \right),
	\end{align*}
	using that $f(t) \sim \alpha \log t$ and $\log \widetilde{L}(\alpha t \log t) = o(\log t)$. Note that $g(t)/f(t) \to 0$ so 
	\[
	\log \left( 1 + \frac{g(t)}{f(t)} + o \left( \frac{1}{t} \right) \right) 
	= \log \left( \left( 1 + \frac{g(t)}{f(t)} \right) \left( 1 + o \left( \frac{1}{t} \right) \right)\right)
	= \log \left( 1 + \frac{g(t)}{f(t)} \right) + o \left( \frac{1}{t} \right).
	\]
	Proceeding similarly, we also have 
	\[
	\log \left( 1 + \frac{g(t+1)}{f(t+1)} \right) 
	%= \log \left( \left( 1 + \frac{g(t)}{f(t)} \right) \left( 1 + O \left( \frac{g(t+1)}{f(t+1)} - \frac{g(t)}{f(t)} \right) \right)\right)
	= \log \left( 1 + \frac{g(t)}{f(t)} \right) + O \left( \frac{g(t+1)}{f(t+1)} - \frac{g(t)}{f(t)} \right).
	\]
	combining what precedes yields
	\begin{align} \label{eq:diff_log_diff_ell}
	\log(\ell_{t+1} - \ell_t) - \log(\ell_t - \ell_{t-1}) 
	= \log \frac{f(t+1)}{f(t)} + O \left( \frac{g(t+1)}{f(t+1)} - \frac{g(t)}{f(t)} \right)
	+ o \left( \frac{1}{t} \right).
	\end{align}
	On the one hand, we have through explicit calculation
	\begin{align*}
	\log \frac{f(t+1)}{f(t)} 
	= \log \left( 1 + \frac{f(t+1)-f(t)}{f(t)} \right)
	= O \left( \frac{1}{t \log t} \right).
	\end{align*}
	On the other hand, it follows from Lemma~\ref{lem:slowly_varying}\ref{it:slowly_varying_1}
	that $\log L(\alpha (t+1) \log (t+1))  - \log L(\alpha t \log t) = o(1/t)$ and the same holds for $\widetilde{L}$ (because $\widetilde{L}$ satisfies \hyperref[hyp_SV_k]{$(\mathrm{SV}_1)$}).
	Combining these facts, we get
	\begin{align*} 
	\frac{g(t+1)}{f(t+1)} - \frac{g(t)}{f(t)} 
	= \frac{g(t+1) - g(t)}{f(t+1)} + g(t) \frac{f(t) - f(t+1)}{f(t)f(t+1)} 
	= o \left( \frac{1}{t} \right).
	\end{align*}
	Coming back to \eqref{eq:diff_log_diff_ell}, this proves Part \ref{it:log_ell_diff_diff}.
\end{proof}
%%
%%%%%%%%%%%%%%%%%%%%%%%%
%%
As a consequence of Parts \ref{it:ell} and \ref{it:ell_diff} of Lemma~\ref{lem:def_ell_t}, there exists $s_0$ such that,
\begin{equation} \label{eq:def_i_t_1}
i_t \coloneqq 
\begin{cases}
t+1, & \text{if } 0 \leq t < s_0, \\
\lfloor \exp(\ell_t) \rfloor, & \text{if } t \geq s_0,
\end{cases}
\end{equation}
defines an increasing sequence of integers.
Indeed, we can choose $s_0$ large enough such that, for any $t \geq s_0$, $\lfloor \exp(\ell_t) \rfloor \geq t$ and $\ell_{t+1} - \ell_t \geq 1$.
Moreover, we will assume that $s_0$ is chosen large enough such that, for $t \geq s_0$, we have $a_{i_t}-a_{i_{t-1}} > 0$.
The fact that this last condition can be met is a consequence of the proof of the next lemma (see \eqref{eq:expansion_diff_a_i_t_1}). %\MP{À voir les hypothèses pour garantir ça}.
%%
%%%%%%%%%%%%%%%%%%%%%%%%
%%
\begin{lemma} \label{lem:theta_t_1}
	Let $\eta \in \R$. Let $i_t$ be defined as in \eqref{eq:def_i_t_1} for some $s_0$ and
	\begin{equation} \label{eq:def_theta_t_1}
		\theta_t \coloneqq
		\begin{cases}
		0, & \text{if } 1 \leq t \leq s_0, \\
		-\log(a_{i_t}-a_{i_{t-1}}), & \text{if } t > s_0.
		\end{cases}
	\end{equation}
	Then, there exists $r_0 \geq s_0$ such that $(\theta_t)_{t\geq r_0}$ is a non-decreasing sequence of non-negative numbers.
	Moreover, as $t \to \infty$,
	\begin{equation} \label{eq:expansion_theta_t_1}
	\theta_t 
	= \alpha \log t - (1-\alpha) \log \log t - (1-\alpha) \log \alpha - \log L(\alpha t \log t) + o(1).
	\end{equation}
\end{lemma}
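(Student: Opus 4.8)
The plan is to derive the asymptotic expansion \eqref{eq:expansion_theta_t_1} first, since monotonicity of $(\theta_t)$ will then be read off from the leading term of this expansion. The starting point is to compute $a_{i_t}-a_{i_{t-1}}$ using assumption \eqref{eq:assumption_a_n_1}. Since $i_t = \lfloor \exp(\ell_t) \rfloor$, we have $\log i_t = \ell_t + O(e^{-\ell_t})$, and by Lemma~\ref{lem:def_ell_t}\ref{it:ell} this is $\alpha t \log t (1+o(1))$. Writing $a_{i_t} = \int_1^{\log i_t} x^{-\alpha} L(x) \diff x + (\text{error})$, the difference $a_{i_t} - a_{i_{t-1}}$ is, up to the error terms, $\int_{\log i_{t-1}}^{\log i_t} x^{-\alpha} L(x) \diff x$. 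Because $L$ is slowly varying (indeed satisfies \hyperref[hyp_SV_k]{$(\mathrm{SV}_2)$}), on the interval $[\log i_{t-1}, \log i_t]$ of length $\ell_t - \ell_{t-1} \sim \alpha \log t$ (Lemma~\ref{lem:def_ell_t}\ref{it:ell_diff}) the integrand is essentially $(\ell_t)^{-\alpha} L(\ell_t)$ times a factor close to $1$; more precisely,
\begin{equation*}
a_{i_t}-a_{i_{t-1}} = (\ell_t - \ell_{t-1}) \cdot \ell_t^{-\alpha} L(\ell_t) \cdot (1+o(1)),
\end{equation*}
and one must check that the error term in \eqref{eq:assumption_a_n_1}, which is $o((\log n)^{-1-\alpha}(\log\log n)^2 L(\log n))$, is negligible compared to this. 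Since $\ell_t \sim \alpha t \log t$, we have $\ell_t^{-1-\alpha}(\log \ell_t)^2 L(\ell_t) = o(\ell_t^{-\alpha} \log t \cdot L(\ell_t))$ up to slowly varying factors — this is where the precise form of the error term in \eqref{eq:assumption_a_n_1} is used, and it should be checked carefully since it is the main quantitative obstacle.

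Next I would take logarithms. From the display above,
\begin{equation*}
\theta_t = -\log(a_{i_t}-a_{i_{t-1}}) = -\log(\ell_t - \ell_{t-1}) + \alpha \log \ell_t - \log L(\ell_t) + o(1).
\end{equation*}
Now I substitute the known asymptotics: $\log \ell_t = \log(\alpha t \log t) + o(1) = \log \alpha + \log t + \log\log t + o(1)$ by Lemma~\ref{lem:def_ell_t}\ref{it:ell} (one should verify the $o(1)$, using that $\ell_t / (\alpha t \log t) \to 1$ with enough speed — the definition \eqref{eq:def_ell_1} gives $\ell_t = \alpha t \log t (1 + O(\log\log t / \log t))$, so $\log \ell_t = \log(\alpha t \log t) + O(\log\log t/\log t)$, which is $o(1)$, good). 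For $\log(\ell_t - \ell_{t-1})$, I use the more precise expansion \eqref{eq:ell_diff} established inside the proof of Lemma~\ref{lem:def_ell_t}: it gives $\ell_t - \ell_{t-1} = \alpha \log t (1 + o(1))$, hence $\log(\ell_t - \ell_{t-1}) = \log \alpha + \log\log t + o(1)$. Finally $\log L(\ell_t) = \log L(\alpha t \log t) + o(1)$ by slow variation of $L$ (Lemma~\ref{lem:slowly_varying}), since $\ell_t / (\alpha t \log t) \to 1$. Assembling:
\begin{equation*}
\theta_t = -(\log\alpha + \log\log t) + \alpha(\log\alpha + \log t + \log\log t) - \log L(\alpha t \log t) + o(1),
\end{equation*}
which simplifies to $\alpha \log t - (1-\alpha)\log\log t - (1-\alpha)\log\alpha - \log L(\alpha t \log t) + o(1)$, exactly \eqref{eq:expansion_theta_t_1}.

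For non-negativity: from \eqref{eq:expansion_theta_t_1}, $\theta_t = \alpha \log t (1 + o(1))$ since all other terms are $o(\log t)$ (using $\log L = o(\log x)$, Lemma~\ref{lem:slowly_varying}\ref{it:slowly_varying_0}), so $\theta_t > 0$ for $t$ large. For monotonicity, I would compute $\theta_{t+1} - \theta_t = -\log(a_{i_{t+1}} - a_{i_t}) + \log(a_{i_t} - a_{i_{t-1}})$ directly. Using the expression $a_{i_t}-a_{i_{t-1}} = (\ell_t - \ell_{t-1}) \ell_t^{-\alpha} L(\ell_t)(1+o(1))$, we get
\begin{equation*}
\theta_{t+1} - \theta_t = \bigl[\log(\ell_t - \ell_{t-1}) - \log(\ell_{t+1} - \ell_t)\bigr] + \alpha\bigl[\log \ell_{t+1} - \log \ell_t\bigr] + \bigl[\log L(\ell_t) - \log L(\ell_{t+1})\bigr] + o(1/t)?
\end{equation*}
— but here one cannot simply absorb everything into $o(1)$, since the leading terms are $O(1/t)$. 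This is the real obstacle: one must show this difference is eventually nonnegative, which requires combining Lemma~\ref{lem:def_ell_t}\ref{it:log_ell_diff} ($\log\ell_{t+1} - \log\ell_t \sim 1/t$) and \ref{it:log_ell_diff_diff} ($\log(\ell_{t+1}-\ell_t) - \log(\ell_t - \ell_{t-1}) = o(1/t)$), together with the fact that $\log L(\ell_{t+1}) - \log L(\ell_t) = o(1/t)$ (from slow variation with the quantitative control of Lemma~\ref{lem:slowly_varying}\ref{it:slowly_varying_1}, since $\ell_{t+1}/\ell_t = 1 + O(1/t)$) and that the errors from the $(1+o(1))$ factors in $a_{i_t}-a_{i_{t-1}}$ also contribute only $o(1/t)$ to the difference — this last point needs the error term in \eqref{eq:assumption_a_n_1} to be not merely small but to have small increments, which again is where the specific $(\log\log n)^2$ factor in the error is designed to help. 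The upshot is $\theta_{t+1} - \theta_t = \alpha/t + o(1/t) > 0$ for $t$ large, giving an $r_0$ as required. I expect this monotonicity verification, and specifically controlling the increments of all the error terms at scale $1/t$, to be the most delicate part; everything else is bookkeeping with slowly varying functions.
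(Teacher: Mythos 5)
Your derivation of the asymptotic expansion \eqref{eq:expansion_theta_t_1} is correct and follows essentially the same route as the paper: write $a_{i_t}-a_{i_{t-1}}$ as $\int_{\ell_{t-1}}^{\ell_t} x^{-\alpha}L(x)\,\diff x$ plus controlled errors, take logarithms, and substitute the asymptotics of $\ell_t$, $\ell_t-\ell_{t-1}$, and $\log L(\ell_t)$.

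However, there is a genuine gap in the monotonicity argument, and you have correctly located it but misdiagnosed its source. You write $a_{i_t}-a_{i_{t-1}}=(\ell_t-\ell_{t-1})\,\ell_t^{-\alpha}L(\ell_t)\,(1+o(1))$ and then need the $o(1)$ factor to have increments of size $o(1/t)$ in order to conclude $\theta_{t+1}-\theta_t=\alpha/t+o(1/t)$. You attribute this control to the $(\log\log n)^2$ factor in the error of \eqref{eq:assumption_a_n_1}; but that only handles one of the two sources of the $o(1)$. The other, dominant contribution is the mismatch between the integral $\int_{\ell_{t-1}}^{\ell_t}x^{-\alpha}L(x)\,\diff x$ and the rectangle $(\ell_t-\ell_{t-1})\,\ell_t^{-\alpha}L(\ell_t)$: by Lemma~\ref{lem:integral_slowly_varying}, this ratio is $1+\frac{\alpha}{2}\cdot\frac{\ell_t-\ell_{t-1}}{\ell_t}(1+o(1))=1+\frac{\alpha}{2t}+o(1/t)$, so the $o(1)$ factor contains an explicit $\Theta(1/t)$ term that has nothing to do with the error in \eqref{eq:assumption_a_n_1}. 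Without isolating this $\frac{\alpha}{2t}$ term, one cannot justify that the $o(1)$ in your display has $o(1/t)$ increments; with it isolated, the term has increments $O(1/t^2)=o(1/t)$ and the remaining error is genuinely $o(1/t)$, so the differencing goes through. This is precisely the role of Lemma~\ref{lem:integral_slowly_varying} in the paper's proof, which you do not invoke; you need it, or some equivalent quantitative estimate of the integral, to obtain the precise expansion $\theta_t=\alpha\log\ell_t-\log(\ell_t-\ell_{t-1})-\log L(\ell_t)-\frac{\alpha}{2t}+o(1/t)$ from which the conclusion follows cleanly.
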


\begin{proof}
	Consider $t > s_0$, so that $i_t = \lfloor\exp(\ell_t)\rfloor$ and $i_{t-1} = \lfloor\exp(\ell_{t-1})\rfloor$.
	Recall also from Lemma~\ref{lem:def_ell_t} that $\ell_t \sim \alpha t \log t$ and $\ell_{t+1} - \ell_t \sim \alpha \log t$.
	It follows from \eqref{eq:assumption_a_n_1} that
	\begin{align}
	a_{i_t}-a_{i_{t-1}}
	& = \int_{\log i_{t-1}}^{\log i_t} x^{-\alpha} L(x) \diff x 
	+ o\left( (\log i_t)^{-1-\alpha} (\log \log i_t)^2 L(\log i_t) \right) \nonumber \\
	& = \int_{\ell_{t-1}}^{\ell_t} x^{-\alpha} L(x) \diff x  
	+ o\left( \frac{1}{t} (\ell_t-\ell_{t-1}) \ell_t^{-\alpha} L(\ell_t) \right), \label{eq:increment_a}
	\end{align}
	using that $\log i_t - \ell_t = O(1/i_t)$ to change the endpoints of the integral.
	Then, Lemma~\ref{lem:integral_slowly_varying} yields
	\begin{align}
	a_{i_t}-a_{i_{t-1}}
	& = \ell_t^{-\alpha} (\ell_t-\ell_{t-1}) L(\ell_t)
	\left( 1 + \frac{\alpha}{2} \cdot \frac{\ell_t-\ell_{t-1}}{\ell_t} (1+o(1)) + o\left( \frac{1}{t} \right) \right) \nonumber \\
	& = \ell_t^{-\alpha} (\ell_t-\ell_{t-1}) L(\ell_t)
	\left( 1 + \frac{\alpha}{2t} + o\left( \frac{1}{t} \right) \right). 
	\label{eq:expansion_diff_a_i_t_1}
	\end{align}
	It follows that
	\[
	\theta_t
	= \alpha \log \ell_t - \log(\ell_t-\ell_{t-1}) - \log L(\ell_t)
	- \frac{\alpha}{2t} + o\left( \frac{1}{t} \right).
	\]
	It follows from Lemma~\ref{lem:slowly_varying}\ref{it:slowly_varying_1} that $\log L(\ell_t) = \log L(\alpha t \log t) + o(1)$ and so the expansion \eqref{eq:expansion_theta_t_1} follows from Lemma~\ref{lem:def_ell_t}.\ref{it:ell}-\ref{it:ell_diff}.
	Hence, $\theta_t \sim \alpha \log t$ and so $\theta_t \geq 0$ for $t$ large enough.
	Moreover, applying again Lemma~\ref{lem:slowly_varying}\ref{it:slowly_varying_1}, we have 
	$\log L(\ell_{t+1}) - \log L(\ell_t) = o(1/t)$, so using Lemma~\ref{lem:def_ell_t}.\ref{it:log_ell_diff}-\ref{it:log_ell_diff_diff}, we get
	\[
	\theta_{t+1} - \theta_t
	= \frac{\alpha}{t} + o\left( \frac{1}{t} \right),
	\]
	which shows the sequence $(\theta_t)$ is eventually non-decreasing.
\end{proof}
We now estimate the asymptotic behavior of the quantities appearing in the assumptions of Proposition \ref{prop:upper_bound} and Proposition \ref{prop:lower_bound}.
\begin{lemma} \label{lem:estimation_key_quantity_1}
	Let $\eta \in \R$. Let $i_t$ be defined as in \eqref{eq:def_i_t_1} and $\theta_t$ as in \eqref{eq:def_theta_t_1}. 
	Then, as $t \to \infty$,
	\begin{equation} \label{eq:key_quantity_1}
	i_t \cdot \exp \left( \sum_{i=2}^{i_t} (e^{\theta_{t_i}} -1) \frac{w_i}{W_i} - \sum_{r=1}^t \theta_r \right) 
	= \exp \left( - \eta t + o(t) \right).
	\end{equation}
	Moreover, as $s \to \infty$, uniformly in $t \geq s$,
	\begin{equation} \label{eq:partial_key_quantity_1}
	\frac{i_t}{i_s} \cdot \exp \left( \sum_{i=i_s+1}^{i_t} (e^{\theta_{t_i}} -1) \frac{w_i}{W_i} - \sum_{r=s+1}^t \theta_r \right) 
	= \exp \left( - \eta (t-s) + o(t-s) \right).
	\end{equation}
\end{lemma}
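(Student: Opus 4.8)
The plan is to take logarithms of both sides and keep track of the cancellations, feeding in the asymptotics for $\ell_t$ and $\theta_t$ already obtained in Lemmas~\ref{lem:def_ell_t} and~\ref{lem:theta_t_1}. First I would rewrite the weight sum $\sum_{i=2}^{i_t}(e^{\theta_{t_i}}-1)\frac{w_i}{W_i}$ by grouping the indices $i$ according to the value of $t_i$: since $t_i=r$ exactly when $i_{r-1}<i\le i_r$ and $\sum_{i=i_{r-1}+1}^{i_r}\frac{w_i}{W_i}=a_{i_r}-a_{i_{r-1}}$ (with the convention $a_{i_0}=a_1=1$), this sum equals $\sum_{r=1}^t(e^{\theta_r}-1)(a_{i_r}-a_{i_{r-1}})$. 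The terms with $r\le s_0$ vanish because $\theta_r=0$ there, while for $r>s_0$ the choice \eqref{eq:def_theta_t_1} gives exactly $e^{\theta_r}(a_{i_r}-a_{i_{r-1}})=1$, so the sum collapses to $\sum_{r=s_0+1}^t\bigl(1-(a_{i_r}-a_{i_{r-1}})\bigr)=(t-s_0)-(a_{i_t}-a_{i_{s_0}})$. Since $a_{i_r}-a_{i_{r-1}}=e^{-\theta_r}\to0$ (recall $\theta_r\sim\alpha\log r$ by Lemma~\ref{lem:theta_t_1}), a Cesàro argument gives $a_{i_t}-a_{i_{s_0}}=o(t)$, so the weight sum is $t+o(t)$.

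Next I would estimate $\sum_{r=1}^t\theta_r$. The key observation is that $\theta_t$ and the increment $\ell_t-\ell_{t-1}$ share the same diverging part. Subtracting the expansion \eqref{eq:expansion_theta_t_1} from the expansion \eqref{eq:ell_diff} obtained in the proof of Lemma~\ref{lem:def_ell_t}, all the terms involving $\log t$, $\log\log t$ and $\log L(\alpha t\log t)$ cancel, leaving
\[
\theta_t-(\ell_t-\ell_{t-1})=1+\eta+\log\widetilde{L}(\alpha t\log t)\Bigl(1+\tfrac{1}{\log t}+O(\tfrac1t)\Bigr)+\tfrac{1-\alpha}{\log t}+o(1).
\]
Since $L$ satisfies \hyperref[hyp_SV_k]{$(\mathrm{SV}_2)$}, in particular $xL'(x)/L(x)\to0$, i.e.\ $\log\widetilde{L}(x)\to0$, so $\theta_t-(\ell_t-\ell_{t-1})=1+\eta+o(1)$. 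Summing from $s_0+1$ to $t$ and telescoping $\sum_{r=s_0+1}^t(\ell_r-\ell_{r-1})=\ell_t-\ell_{s_0}$ yields $\sum_{r=1}^t\theta_r=\ell_t+(1+\eta)t+o(t)$.

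It then remains to assemble the pieces. Since $i_t=\lfloor e^{\ell_t}\rfloor$ we have $\log i_t=\ell_t+O(e^{-\ell_t})=\ell_t+o(1)$, so the logarithm of the left-hand side of \eqref{eq:key_quantity_1} is $\log i_t+(t+o(t))-(\ell_t+(1+\eta)t+o(t))=-\eta t+o(t)$, proving \eqref{eq:key_quantity_1}. For the uniform statement \eqref{eq:partial_key_quantity_1} I would run exactly the same computation with the sums starting at $s+1$ (resp.\ $i_s+1$) instead of $1$ (resp.\ $2$), so that the logarithm of its left-hand side is $(\log i_t-\log i_s)+\bigl((t-s)-\sum_{r=s+1}^t e^{-\theta_r}\bigr)-\bigl((\ell_t-\ell_s)+(1+\eta)(t-s)+\sum_{r=s+1}^t o(1)\bigr)$; since $\log(i_t/i_s)=\ell_t-\ell_s+O(e^{-\ell_s})$ this simplifies to $-\eta(t-s)$ plus error terms. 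The point is that for $s$ large these errors are uniformly $o(t-s)$ in $t\ge s$: one bounds $\sum_{r=s+1}^t e^{-\theta_r}$ and $\sum_{r=s+1}^t o(1)$ term by term by $(t-s)\sup_{r>s}|\cdot|$, and the floor correction $O(e^{-\ell_s})$ is $o(t-s)$ because $t-s\ge1$ whenever the sums are nonempty (the case $t=s$ being trivial).

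The hard part is not any individual estimate — everything reduces to Stirling-type sums and the slowly varying properties already packaged into Lemmas~\ref{lem:def_ell_t} and~\ref{lem:theta_t_1} — but rather making all the error terms uniform in $t\ge s$ for the second statement. This is why it is worth arranging the cancellation to be exact at every scale (comparing $\theta_t$ directly with $\ell_t-\ell_{t-1}$, rather than evaluating each sum on its own) before any ``$o$'' is introduced, and why the negligible quantities have to be controlled through their tails $\sup_{r>s}|\cdot|$ rather than pointwise.
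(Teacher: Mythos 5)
Your argument is correct, and the endgame — rewriting the weight sum as $\sum_{r=s+1}^t(e^{\theta_r}-1)(a_{i_r}-a_{i_{r-1}})$ and exploiting $e^{\theta_r}(a_{i_r}-a_{i_{r-1}})=1$, together with $\log(i_t/i_s)=\ell_t-\ell_s+o(1)$ — is the same as in the paper. Where you genuinely diverge is in the estimate of $\sum_{r=s+1}^t\theta_r$. The paper sums the expansion \eqref{eq:expansion_theta_t_1} of $\theta_r$ term by term: integral comparison for $\sum\log r$ and $\sum\log\log r$, and an Abel summation to control $\sum\log L(\alpha r\log r)$, then compares the resulting explicit formula with $\ell_t-\ell_s$ computed from \eqref{eq:def_ell_1}. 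You instead observe that $\theta_r$ and $\ell_r-\ell_{r-1}$ differ, by subtracting \eqref{eq:expansion_theta_t_1} from \eqref{eq:ell_diff}, by $1+\eta+o(1)$, and then telescope. This is cleaner: the cancellation of the divergent terms $\alpha\log r-(1-\alpha)\log\log r-\log L(\alpha r\log r)$ happens pointwise before any summation, so no Stirling-type or Abel-type estimates are needed, and the uniformity in $t\ge s$ reduces to the trivial bound $\sum_{r=s+1}^t|f(r)|\le(t-s)\sup_{r>s}|f(r)|$ for a null sequence $f$. The slight cost is that your argument leans on equation \eqref{eq:ell_diff}, which is an intermediate display in the proof of Lemma~\ref{lem:def_ell_t} rather than part of its statement (the statement only records $\ell_t-\ell_{t-1}\sim\alpha\log t$); the paper's route uses only the stated expansion \eqref{eq:expansion_theta_t_1} at the price of redoing the summation by parts. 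Your Ces\`aro remark for $a_{i_t}-a_{i_{s_0}}=o(t)$, the $\sup_{r>s}$ device for the uniform part, and the treatment of the floor correction $O(e^{-\ell_s})$ are all fine.
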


\begin{proof}
	We focus on the proof of \eqref{eq:partial_key_quantity_1}. Then, \eqref{eq:key_quantity_1} follows from the same lines (or it can be seen as a consequence of \eqref{eq:partial_key_quantity_1} by letting $s \to \infty$ slowly enough in comparison with $t$). The case $s=t$ is obvious so we assume $t \geq s+1$, so that $o(1)$ terms can be included in $o(t-s)$. In the following, $o(\dots)$ terms are meant to hold as $s \to \infty$ uniformly in $t \geq s+1$.
	We first estimate $\sum_{r=s+1}^t \theta_r$, by summing the expansion of $\theta_r$ in \eqref{eq:expansion_theta_t_1}.
	Using standard integral comparison, we have
	\begin{align*}
	\sum_{r=s+1}^t \log r 
	& = t \log t - t - s\log s + s + o(t-s), \\
	\sum_{r=s+1}^t \log \log r 
	& = t \log \log t - s \log \log s + o(t-s).
	\end{align*}
	For the term involving $L$ we use a summation by part to get
	\begin{align*}
	& \sum_{r=s+1}^t \log L(\alpha r \log r) \\
	& = \sum_{r=s+2}^{t-1} r \left[ \log L(\alpha r \log r)  - \log L(\alpha (r+1) \log (r+1)) \right] + t \log L(\alpha t \log t) - s \log L(\alpha (s+1) \log (s+1)) \\
	& = t \log L(\alpha t \log t) - s \log L(\alpha s \log s) + o(t-s),
	\end{align*}
	where we used that $\log L(\alpha r \log r)  - \log L(\alpha (r+1) \log (r+1)) = o(1/r)$ by Lemma~\ref{lem:slowly_varying}\ref{it:slowly_varying_1}, to bound the sum and to replace $\log L(\alpha (s+1) \log (s+1))$ by $\log L(\alpha s \log s)$.
	Combining this yields
	\begin{align*}
	\sum_{r=s+1}^t \theta_r
	& = \alpha t \log t  - (1-\alpha) t \log \log t - t \log L(\alpha t \log t) - t[ \alpha + (1-\alpha) \log \alpha] \\
	& \quad {} - \left( \alpha s \log s  - (1-\alpha) s \log \log s - s \log L(\alpha s \log s) - s[ \alpha + (1-\alpha) \log \alpha] \right) + o(t-s).
	\end{align*}
	On the other hand, using the definition of $a_n$ and then recalling that $e^{\theta_r}  (a_{i_r} - a_{i_{r-1}}) = 1$, we have
	\begin{align*}
	\sum_{i=i_s+1}^{i_t} (e^{\theta_{t_i}} -1) \frac{w_i}{W_i}
	= \sum_{r=s+1}^{t} (e^{\theta_r} -1) (a_{i_r} - a_{i_{r-1}})
	= \sum_{r=s+1}^{t} (1+o(1))
	= t-s + o(t-s).
	\end{align*}
	Combining this with $i_t/i_s = \exp(\ell_t-\ell_s+o(1))$ and the definition of $\ell_t$ in \eqref{eq:def_ell_1} yields \eqref{eq:partial_key_quantity_1}.
\end{proof}

\begin{proof}[Proof of Theorem~\ref{thm:powers_of_log}]
	Let $i_t$ and $\theta_t$ defined as in \eqref{eq:def_i_t_1} and \eqref{eq:def_theta_t_1} for some fixed $\eta \in \R$.
	One can check that
	\begin{align*}
	t_n
	& = \frac{\log n}{\alpha \log \log n} 
	\left( 1 
	+ \frac{\log L(\log n) + \log \log \log n}{\alpha \log \log n - \log L(\log n) - \log \log \log n}
	+ \frac{1+\alpha +\log \alpha + \eta}{\alpha \log \log n}
	+ o \left( \frac{1}{\log \log n} \right)
	\right).
	\end{align*}
	
	\noindent
	\textit{Upper bound.} We take $\eta > 0$ so that it follows from \eqref{eq:key_quantity_1} that \eqref{eq:ass_upper_bound} is satisfied. Hence, we can apply Proposition~\ref{prop:upper_bound} to get $\sup_{n\geq 1}(\haut(\ttu_{n})- t_n)<\infty$ almost surely. Letting $\eta \to 0$ yields the upper bound.%
	\medskip 
	
	\noindent
	\textit{Lower bound.} We fix some $\eta < 0$ and we aim at checking that \eqref{eq:ass_lower_bound} is satisfied to apply Proposition~\ref{prop:lower_bound} with $T(n) \coloneqq (\log n)^{4/5}$.
	Using assumption \eqref{eq:ass_somme_carre}, we have
	\begin{align*}
	\delta_1(n) 
	& = \sum_{r \geq T(n)+1} e^{2\theta_{r}} \sum_{i=i_{r-1}+1}^{i_r} \frac{w_i^2}{W_i^2} 
	\leq \sum_{r \geq T(n)+1} e^{2\theta_{r}} \cdot O \left( \frac{1}{i_{r-1}} \right)
	= \sum_{r \geq T(n)+1} \exp \left( - \alpha r \log r (1+o(1)) \right),
	\end{align*}
	using $i_{r-1} = \exp(\alpha r \log r (1+o(1)))$ and $\theta_r \sim \alpha \log r$.
	It follows that 
	\begin{align*} 
	\sum_{t \geq 1} \delta_1(i_t) 
	= \sum_{t \geq 1} O \left( \exp \left( - \alpha T(i_t) \right) \right)
	= \sum_{t \geq 1} O \left( \exp \left( - \alpha t^{4/5} \right) \right)
	< \infty,
	\end{align*}
	proving the first part of \eqref{eq:ass_lower_bound}.
	We now aim at checking the second part of \eqref{eq:ass_lower_bound}.
	We have $T(i_t) \sim (\alpha t \log t)^{4/5}$ and, recalling $\theta_t \sim \alpha \log t$, $\theta_{T(i_t)} \sim \frac{4}{5} \alpha \log  t$.
	Hence, we get, for any $C > 0$,
	\[
	(\theta_t-\theta_{T(i_t)})\cdot (t-T(i_t))^{3/4} - C \log(t-T(i_t))
	\sim \frac{1}{5} \alpha t^{3/4} \log  t
	\]
	and it follows from the second part of Lemma~\ref{lem:E(n)} combined with \eqref{eq:ass_somme_carre} that
	\begin{align}  \label{eq:lower_bound_E(i_t)_1}
	E(i_t) 
	& \geq \exp\left(-\frac{1}{5} \alpha t^{3/4} \log  t (1+o(1)) \right) 
	- \frac{C}{i_{T(i_t)}}
	\geq \exp\left(-\frac{1}{5} \alpha t^{3/4} \log  t (1+o(1)) \right),
	\end{align}
	because $1/i_{T(i_t)} = \exp(-\alpha (\alpha t \log t)^{4/5} \cdot \frac{4}{5} \log  t \cdot (1+o(1)))$.
	On the other hand, using \eqref{eq:ass_somme_carre} and \eqref{eq:partial_key_quantity_1}, we have
	\begin{align}
	\delta_2(n) 
	& = \sum_{r \geq T(n)+1} e^{2\theta_{r}}
	\cdot \exp \left( \sum_{s=T(n)+1}^{r-1} \theta_s
	- \sum_{i=i_{T(n)}+1}^{i_{r-1}} (e^{\theta_{t_i}} -1) \frac{w_i}{W_i} \right)
	\cdot \sum_{i=i_{r-1}+1}^{i_r} \frac{w_i^2}{W_i^2} \nonumber \\
	& \leq \sum_{r \geq T(n)+1} e^{2\theta_{r}}
	\cdot \frac{i_{r-1}}{i_{T(n)}} \exp \left( \eta (r-1-T(n)) + o(r-1-T(n)) \right)
	\cdot O \left( \frac{1}{i_{r-1}} \right) \nonumber \\
	& = \frac{1}{i_{T(n)}} \sum_{s \geq 0} \exp \left( 2 \theta_{s+T(n)+1} + \eta s + o(s) + O(1) \right) \label{eq:bound_delta_2_1}.
	\end{align}
	Then, by \eqref{eq:expansion_theta_t_1}, for $n$ large enough, we have $\theta_{s+T(n)+1} \leq 2 \alpha \log (s+T(n)+1) \leq 2 \alpha (\log(s+1) + \log T(n))$. Summing over $s$, this yields $\delta_2(n) = O(T(n)^{2\alpha}/i_{T(n)})$.
	Combining this with \eqref{eq:lower_bound_E(i_t)_1}, we get
	\begin{align*}
	\sum_{t \geq 1} \frac{\delta_2(i_t)}{E(i_t)^2} 
	& = \sum_{t \geq 1} \frac{T(i_t)^{2\alpha}}{i_{T(i_t)}} \exp\left(-\frac{1}{5} \alpha t^{3/4} \log  t (1+o(1)) \right) \\
	& = \sum_{t \geq 1} \exp \left( -\alpha (\alpha t \log t)^{4/5} \cdot \frac{4}{5} \log  t \cdot (1+o(1)) \right)
	< \infty,
	\end{align*}
	using previous estmitates to note that $1/i_t$ is the dominating factor.
	This proves the second part of \eqref{eq:ass_lower_bound}.
	Hence, we can apply Proposition~\ref{prop:lower_bound} to get that almost surely, for $n$ large enough, $\haut(\ttT_n) \geq t_n - T(n) - 1$. Letting $\eta \to 0$ yields the lower bound.
\end{proof}

%%%%%%%%%%%%%%%%%%%%%%%%%%%%%%%%%%%%%%%%%%%%%%%%%%%%
\subsection{Quickly converging sequences}
%%%%%%%%%%%%%%%%%%%%%%%%%%%%%%%%%%%%%%%%%%%%%%%%%%%%
The goal of this section is to prove Theorem~\ref{thm:quickly}. 
We work under assumption \eqref{eq:assumption_a_n_2} and one of the assumptions \eqref{eq:ass_somme_carre}, \eqref{eq:ass_somme_carre_beta=1} or \eqref{eq:ass_somme_carre_beta>1}, depending on the value of $\beta$. 
We consider the cases $\beta\in\intervalleoo{0}{1}$, $\beta=1$ and $\beta\in \intervalleoo{1}{\infty}$ in separate subsections.
All those subsections are organized in the same fashion as the previous one.
%All those subsections are organized in the same fashion as the previous one: we first state and prove auxiliary results stated as lemmas that allow us to define sequences $(i_t)$ and $(\theta_t)$.
%The proof of the corresponding part \ref{it:asymptotic height beta in (0,1)}, \ref{it:asymptotic height beta=1} or \ref{it:asymptotic height beta>1} of Theorem~\ref{thm:quickly} then follows from applying Proposition~\ref{prop:upper_bound} and Proposition~\ref{prop:lower_bound} to those sequences.

%assume moreover that $L$ satisfies .
%In this section we assume that
%% $a_n\underset{n\rightarrow\infty }{\rightarrow} a_\infty$ and that we can write 
%\[
%a_n = \int_1^{\exp(\log^\beta n)} x^{-\alpha} L(x) \diff x + \delta_n,
%\]
%where $\beta >0$ and $\alpha > 1$ and $L$ is a slowly varying function bounded on compact sets, and $\delta_n$ is an error term with 
%\begin{align*}
%	\delta_n = \petito{\exp\left((1-\alpha-\epsilon) \log^\beta n\right)}
%\end{align*}
%We will study three cases separately for the value of $\beta$ for which we will have different assumptions on $L$ and $\delta_n$. 
% \DS{On va voir quelles hypothèse sont nécessaires en cours de route.}

%%%%%%%%%%%%%%%%%%%%%%%%%
\subsubsection{Case \texorpdfstring{$\beta\in \intervalleoo{0}{1}$}{beta in (0,1)}}
%%%%%%%%%%%%%%%%%%%%%%%%%

We start by introducing a sequence $(\ell_t)$ meant to satisfy $\ell_t \simeq \exp(\log^\beta i_t)$, which is the quantity appearing in assumption \eqref{eq:assumption_a_n_2} when $n=i_t$.
\begin{lemma} \label{lem:def_ell_t_2}
	Let $\kappa > 0$. 
	For $t \geq 1$, we define 
	\begin{equation} 
	\ell_t \coloneqq \exp\left( \kappa \cdot t^{\frac{\beta}{1-\beta}} \right).
	\end{equation}
	Then, as $t \to \infty$,
	\begin{enumerate}
		\item\label{it:quickly converging log lt - log lt-1} 
		$\log \ell_t - \log \ell_{t-1}
		= \frac{\kappa \beta}{1-\beta} \cdot t^{\frac{2\beta-1}{1-\beta}}\cdot (1+\grandO{\frac{1}{t}})$;
		\item\label{it:quickly converging lt - lt-1} If $\beta \in (0,\frac{1}{2})$, then
		$\ell_t - \ell_{t-1} =  \ell_t \cdot \frac{\kappa \beta}{1-\beta} \cdot t^{\frac{2\beta-1}{1-\beta}} 
		\cdot \left( 1 - \frac{\kappa \beta}{2(1-\beta)} t^{\frac{2\beta-1}{1-\beta}} 
		+ \petito{t^{\frac{2\beta-1}{1-\beta}}} \right)$.
	\end{enumerate}
\end{lemma}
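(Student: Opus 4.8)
The plan is to reduce both statements to Taylor expansions of the map $t \mapsto t^\gamma$, where we set $\gamma \coloneqq \frac{\beta}{1-\beta}$; the only algebraic identity worth isolating is $\gamma - 1 = \frac{2\beta-1}{1-\beta}$, so that the exponent $t^{(2\beta-1)/(1-\beta)}$ appearing in both parts is simply $t^{\gamma-1}$, which I will abbreviate $x_t$.

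First I would prove Part~\ref{it:quickly converging log lt - log lt-1}. Since $\log \ell_t = \kappa t^\gamma$, we have $\log \ell_t - \log \ell_{t-1} = \kappa\bigl(t^\gamma - (t-1)^\gamma\bigr) = \kappa t^\gamma\bigl(1 - (1-1/t)^\gamma\bigr)$, and the binomial expansion $(1-1/t)^\gamma = 1 - \gamma/t + O(1/t^2)$ gives $1 - (1-1/t)^\gamma = \frac{\gamma}{t}\bigl(1 + O(1/t)\bigr)$. Hence $\log\ell_t - \log\ell_{t-1} = \kappa\gamma\, t^{\gamma-1}(1 + O(1/t))$, which is the claim once one substitutes $\kappa\gamma = \frac{\kappa\beta}{1-\beta}$ and $\gamma - 1 = \frac{2\beta-1}{1-\beta}$. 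Note that this part uses only $\beta \in (0,1)$.

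For Part~\ref{it:quickly converging lt - lt-1}, I would write $\ell_t - \ell_{t-1} = \ell_t\bigl(1 - \ell_{t-1}/\ell_t\bigr) = \ell_t\bigl(1 - e^{-\Delta_t}\bigr)$ with $\Delta_t \coloneqq \log\ell_t - \log\ell_{t-1}$. This is where the hypothesis $\beta < \tfrac12$ enters: it forces $\gamma < 1$, hence $x_t = t^{\gamma-1} \to 0$ and $\Delta_t \to 0$, which legitimizes the expansion $1 - e^{-\Delta_t} = \Delta_t - \tfrac12 \Delta_t^2 + O(\Delta_t^3)$. Pushing the binomial expansion one order further, $(1-1/t)^\gamma = 1 - \gamma/t + \tfrac{\gamma(\gamma-1)}{2t^2} + O(1/t^3)$, gives $\Delta_t = \kappa\gamma\, x_t + O(t^{\gamma-2})$; since $t^{\gamma-2}/x_t^2 = t^{-\gamma} \to 0$ (here $\gamma > 0$ is used), the remainder is $o(x_t^2)$, so $\Delta_t = \kappa\gamma\, x_t + o(x_t^2)$, and consequently $\Delta_t^2 = \kappa^2\gamma^2 x_t^2 + o(x_t^2)$ and $\Delta_t^3 = O(x_t^3) = o(x_t^2)$. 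Plugging these in,
\[
1 - e^{-\Delta_t} = \kappa\gamma\, x_t - \frac{\kappa^2\gamma^2}{2}\, x_t^2 + o(x_t^2) = \kappa\gamma\, x_t\Bigl(1 - \frac{\kappa\gamma}{2}\, x_t + o(x_t)\Bigr),
\]
and multiplying by $\ell_t$ and rewriting $\kappa\gamma = \frac{\kappa\beta}{1-\beta}$ yields exactly the stated asymptotics.

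The whole argument is elementary; the only place demanding a little care is the error bookkeeping in Part~\ref{it:quickly converging lt - lt-1}, namely checking that both the $O(t^{\gamma-2})$ remainder coming from the finer binomial expansion of $\Delta_t$ and the cubic remainder $O(\Delta_t^3)$ of the exponential are $o(x_t^2)$ --- which is precisely where the constraint $\gamma \in (0,1)$, i.e.\ $\beta \in (0,\tfrac12)$, is needed.
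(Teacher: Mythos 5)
Your proof is correct and follows the same route as the paper: Part~\ref{it:quickly converging log lt - log lt-1} via the binomial expansion of $(1-1/t)^\gamma$, and Part~\ref{it:quickly converging lt - lt-1} via $\ell_t-\ell_{t-1} = \ell_t(1-e^{-\Delta_t})$ combined with a second-order Taylor expansion of the exponential. The only cosmetic difference is in the error bookkeeping for $\Delta_t$: you re-derive $\Delta_t=\kappa\gamma t^{\gamma-1}+O(t^{\gamma-2})$ by pushing the binomial expansion one order further, whereas the paper reuses the Part~(i) estimate $\Delta_t=\kappa\gamma t^{\gamma-1}(1+O(1/t))$ together with the observation $O(1/t)=o(t^{\gamma-1})$ --- these are equivalent.
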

\begin{proof}
	We start by noting that
	\begin{align*}
		t^\frac{\beta}{1-\beta} - (t-1)^{\frac{\beta}{1-\beta}}
		= t^\frac{\beta}{1-\beta} \cdot \Biggl(1 - \left(1-\frac{1}{t}\right)^{\frac{\beta}{1-\beta}}\Biggr) 
		= \frac{\beta}{1-\beta}\cdot t^{\frac{2\beta-1}{1-\beta}} \cdot \left(1 + \grandO{\frac{1}{t}}\right).
	\end{align*}
	Plugging the last display in the expression for $\ell_t$ proves Part \ref{it:quickly converging log lt - log lt-1}. 
	
	Now assume that $\beta \in (0,\frac{1}{2})$. Then, $\frac{2\beta-1}{1-\beta} < 0$, so it follows from \ref{it:quickly converging log lt - log lt-1} that $\log \ell_{t-1} -\log \ell_t \to 0$ or equivalently $(\ell_t-\ell_{t-1})/\ell_t \to 0$.
	Therefore, the following expansion holds
	\begin{align*}
	\frac{\ell_t-\ell_{t-1}}{\ell_t}&=1-\exp(\log \ell_{t-1} - \log \ell_t)\\
	&= -\left(\log \ell_{t-1} -\log \ell_t  \right) - \frac{1}{2}\left(\log \ell_{t-1} -\log \ell_t  \right)^2 + \petito {\left(\log \ell_{t-1} -\log \ell_t  \right)^2}\\ 
	&= \frac{\kappa \beta}{1-\beta} \cdot t^{\frac{2\beta-1}{1-\beta}} 
	\cdot \left( 1- \frac{\kappa \beta}{2(1-\beta)} t^{\frac{2\beta-1}{1-\beta}} 
	+ \petito{t^{\frac{2\beta-1}{1-\beta}}} \right),
	\end{align*}
	using \ref{it:quickly converging log lt - log lt-1} and that $O(t^{-1}) = o(t^{\frac{2\beta-1}{1-\beta}})$.
	This proves \ref{it:quickly converging lt - lt-1}.
	%%
%	Now, we write
%	\begin{align*}
%		\ell_t - \ell_{t-1} = \ell_t \cdot (1-\frac{\ell_{t-1}}{\ell_t}) = \ell_t \cdot (1-\exp\left(\log \ell_{t-1} -\log \ell_t  \right)).
%	\end{align*}
%	From here we have study the cases $\beta\in \intervalleoo{0}{\frac{1}{2}}$, $\beta=\frac{1}{2}$ and $\beta\in \intervalleoo{\frac{1}{2}}{\infty}$ separately. 
%	\begin{itemize}
%		\item If $\beta\in \intervalleoo{0}{\frac{1}{2}}$, then $\log \ell_{t-1} -\log \ell_t = o(1)$ so we have 
%		\begin{align*}
%			1-\exp\left(\log \ell_{t-1} -\log \ell_t  \right)
%			&= -\left(\log \ell_{t-1} -\log \ell_t  \right) - \frac{1}{2}\left(\log \ell_{t-1} -\log \ell_t  \right)^2 + \petito {\left(\log \ell_{t-1} -\log \ell_t  \right)^2}\\ 
%			&= \frac{\kappa \beta}{1-\beta} \cdot t^{\frac{2\beta-1}{1-\beta}} 
%			\cdot \left( 1- \frac{\kappa \beta}{2(1-\beta)} t^{\frac{2\beta-1}{1-\beta}} 
%			+ \petito{t^{\frac{2\beta-1}{1-\beta}}} \right),
%		\end{align*}
%		using in particular that $O(t^{-1}) = o(t^{\frac{2\beta-1}{1-\beta}})$.
%		\item If $\beta=\frac{1}{2}$, we simply have $\log \ell_{t-1} -\log \ell_t =-\kappa$.
%		\item If $\beta\in \intervalleoo{\frac{1}{2}}{1}$, then $\frac{2\beta-1}{1-\beta} > 0$, so it follows from \ref{it:quickly converging log lt - log lt-1} that $\log \ell_{t-1} -\log \ell_t \leq - \kappa t^{\frac{2\beta-1}{1-\beta}}$ for $t$ large enough.
%	\end{itemize}
%	This finishes the proof of \ref{it:quickly converging lt - lt-1}.
\end{proof}
%%
%%%%%%%%%%%%%%%%%%%%%%
%%
We now define
\begin{equation}\label{eq:def_i_t_2}
i_t \coloneqq 
\begin{cases}
t+1, & \text{if } 0 \leq t < s_0, \\
\left\lfloor \exp\left((\log \ell_t)^{\frac{1}{\beta}}\right) \right\rfloor
= \left\lfloor \exp\left(\kappa^\frac{1}{\beta} t^\frac{1}{1-\beta}\right) \right\rfloor, 
& \text{if } t \geq s_0,
\end{cases}
\end{equation}
by choosing some $s_0$ large enough so that $(i_t)_{t\geq 1}$ is increasing.
Moreover, we will assume that $s_0$ is chosen large enough such that, for $t \geq s_0$, we have $a_{i_t}-a_{i_{t-1}} > 0$.
As in the previous case, the fact that this last condition can be met is a consequence of the proof of the next lemma. 
%%
%%%%%%%%%%%%%%%%%%%%%%
%%
\begin{lemma} \label{lem:theta_t_2}
	Let $\kappa > 0$. Let $i_t$ be defined as in \eqref{eq:def_i_t_2} and
	\begin{equation} \label{eq:def_theta_t_2}
	\theta_t \coloneqq
	\begin{cases}
	0, & \text{if } 1 \leq t \leq s_0, \\
	-\log(a_{i_t}-a_{i_{t-1}}), & \text{if } t > s_0.
	\end{cases}
	\end{equation}
	Then, there exists $r_0 \geq s_0$ such that $(\theta_t)_{t\geq r_0}$ is a non-decreasing sequence of non-negative numbers.
	Moreover, as $t \to \infty$,
	\begin{equation} \label{eq:expansion_theta_t_2}
	\theta_t \sim (\alpha -1) \kappa \cdot t^{\frac{\beta}{1-\beta}}
	\end{equation}
\end{lemma}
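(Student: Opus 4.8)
The plan is to mirror the proof of Lemma~\ref{lem:theta_t_1}, replacing the ``powers of $\log$'' estimates by the exponential-growth estimates from Lemma~\ref{lem:def_ell_t_2}. Fix $t > s_0$, so that $i_t = \lfloor \exp((\log \ell_t)^{1/\beta}) \rfloor$, and hence $\log i_t = (\log \ell_t)^{1/\beta} + O(1/i_t)$, i.e.\@ $\exp(\log^\beta i_t) = \ell_t (1 + O(1/i_t))$. First I would plug $n = i_t$ into assumption \eqref{eq:assumption_a_n_2}: since $\exp(\log^\beta i_t) \sim \ell_t$ and $\beta < 1$, the quantity $\exp(-(\alpha-1)\log^\beta i_t)$ is $\ell_t^{-(\alpha-1)}(1+o(1))$ (here one uses that the relative error $O(1/i_t)$ raised to a fixed power is negligible), and $J(\exp(\log^\beta i_t)) = J(\ell_t)(1+o(1))$ by slow variation. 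So $a_\infty - a_{i_t} = \ell_t^{-(\alpha-1)} J(\ell_t)(1 + o(1))$, and similarly for $i_{t-1}$.

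Next I would subtract to get $a_{i_t} - a_{i_{t-1}} = (a_\infty - a_{i_{t-1}}) - (a_\infty - a_{i_t})$. Writing this as $\ell_{t-1}^{-(\alpha-1)} J(\ell_{t-1}) - \ell_t^{-(\alpha-1)} J(\ell_t)$ up to multiplicative $(1+o(1))$ factors, I factor out $\ell_t^{-(\alpha-1)} J(\ell_t)$ and am left with $(\ell_t/\ell_{t-1})^{\alpha-1} \cdot J(\ell_{t-1})/J(\ell_t) - 1$. By Lemma~\ref{lem:def_ell_t_2}\ref{it:quickly converging log lt - log lt-1}, $\log(\ell_t/\ell_{t-1}) = \frac{\kappa\beta}{1-\beta} t^{(2\beta-1)/(1-\beta)}(1+O(1/t)) \to 0$, so $(\ell_t/\ell_{t-1})^{\alpha-1} = 1 + (\alpha-1)\frac{\kappa\beta}{1-\beta} t^{(2\beta-1)/(1-\beta)}(1+o(1))$, and by slow variation $J(\ell_{t-1})/J(\ell_t) \to 1$ with the correction being $o$ of the previous term (one has to be a little careful here: one needs $J(\ell_{t-1})/J(\ell_t) = 1 + o(t^{(2\beta-1)/(1-\beta)})$; when $\beta < 1/2$ this follows from \hyperref[hyp_SV_k]{$(\mathrm{SV}_2)$} applied to $\log J$ together with the mean value theorem, while the case $\beta \geq 1/2$ is excluded here since $\beta \in (0,1)$ but one still must check $\beta < 1/2$ vs.\@ $\beta \in [1/2,1)$ — actually in the latter range the exponent $(2\beta-1)/(1-\beta)$ is $\geq 0$ and the argument that $\ell_t/\ell_{t-1}\to 1$ fails, so the leading behavior must be read off differently, namely from $J(\ell_{t-1})/J(\ell_t)\to 1$ being dominated by the power term which no longer vanishes; this is the delicate point). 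Thus $a_{i_t} - a_{i_{t-1}} = \ell_t^{-(\alpha-1)} J(\ell_t) \cdot (\alpha-1)\frac{\kappa\beta}{1-\beta} t^{(2\beta-1)/(1-\beta)}(1+o(1))$, which in particular is positive for $t$ large, justifying the choice of $s_0$.

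Finally, taking $-\log$ gives $\theta_t = (\alpha-1)\log \ell_t - \log J(\ell_t) - \log\big((\alpha-1)\frac{\kappa\beta}{1-\beta} t^{(2\beta-1)/(1-\beta)}\big) + o(1)$. Since $\log \ell_t = \kappa t^{\beta/(1-\beta)}$ dominates — $\log J(\ell_t) = o(\log \ell_t)$ by Lemma~\ref{lem:slowly_varying}\ref{it:slowly_varying_0}, and the logarithmic term is $O(\log t) = o(t^{\beta/(1-\beta)})$ — we get $\theta_t \sim (\alpha-1)\kappa t^{\beta/(1-\beta)}$, which is \eqref{eq:expansion_theta_t_2}; in particular $\theta_t \geq 0$ eventually. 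For monotonicity, I would estimate $\theta_{t+1} - \theta_t$: the leading contribution is $(\alpha-1)(\log \ell_{t+1} - \log \ell_t) \sim (\alpha-1)\frac{\kappa\beta}{1-\beta} t^{(2\beta-1)/(1-\beta)} > 0$, while $\log J(\ell_{t+1}) - \log J(\ell_t) \to 0$ (and is of smaller order than $t^{(2\beta-1)/(1-\beta)}$ by the same slow-variation argument as above when $\beta<1/2$; when $\beta\in[1/2,1)$ the term $t^{(2\beta-1)/(1-\beta)}$ does not vanish but $\log\ell_{t+1}-\log\ell_t\to\infty$ or stays bounded below by a positive constant, and $\log J$ increments are still $o$ of it because $J$ is slowly varying and its argument grows at least geometrically), and the logarithmic-term increment is $O(1/t)$, negligible against $t^{(2\beta-1)/(1-\beta)}$ precisely when $(2\beta-1)/(1-\beta) > -1$, i.e.\@ $\beta > 0$, which holds. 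Hence $\theta_{t+1} - \theta_t > 0$ for $t$ large, giving the claimed eventual monotonicity.

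The main obstacle I anticipate is the bookkeeping of error terms when passing from \eqref{eq:assumption_a_n_2} at the two times $i_t$ and $i_{t-1}$ to their difference: the relative errors $\petito{(\log n)^{(4\beta-2)\wedge 0}}$ in the assumption, the $O(1/i_t)$ coming from the floor function, and the slow-variation ratios $J(\ell_{t-1})/J(\ell_t)$ all need to be shown to be $o$ of the main term $t^{(2\beta-1)/(1-\beta)}$ after the leading-order cancellation, and the regularity assumption on $J$ (namely \hyperref[hyp_SV_k]{$(\mathrm{SV}_2)$} for $\beta < 1/2$ versus only \ref{hyp_SV_0} otherwise) is exactly what makes this work in the respective ranges — this is why the hypotheses on $J$ bifurcate at $\beta = 1/2$, and handling the two regimes uniformly (or cleanly separately) is the subtle part.
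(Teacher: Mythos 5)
Your overall strategy---estimate $a_{i_t}-a_{i_{t-1}}$ from assumption~\eqref{eq:assumption_a_n_2}, take $-\log$, read off the asymptotics, then check the increment is eventually positive---is exactly the paper's. For the sub-case $\beta<\tfrac12$ your route is genuinely different from the paper's: you expand the ratio $(\ell_t/\ell_{t-1})^{\alpha-1}J(\ell_{t-1})/J(\ell_t)-1$ directly, whereas the paper converts $J$ into $L$ via Lemma~\ref{lem:from_J_to_L} so that $\ell_{t-1}^{1-\alpha}J(\ell_{t-1})-\ell_t^{1-\alpha}J(\ell_t)=\int_{\ell_{t-1}}^{\ell_t}y^{-\alpha}L(y)\,dy$ and then invokes Lemma~\ref{lem:integral_slowly_varying}. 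Both give the same leading term with relative error $o(t^{(2\beta-1)/(1-\beta)})$; the paper's integral representation just makes the error bookkeeping (which you yourself flag as the main hazard) more systematic and reuses the slowly-varying machinery already built for the $\log^\alpha$ case.

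The genuine gap is that you never actually carry out the argument for $\beta\geq\tfrac12$---you repeatedly flag it as "the delicate point" and gesture at "reading off the leading behavior differently," but do not say how. This case cannot be waved away, and it is also where your monotonicity argument breaks, since the Taylor expansion $(\ell_t/\ell_{t-1})^{\alpha-1}=1+(\alpha-1)\log(\ell_t/\ell_{t-1})(1+o(1))$ that your whole computation rests on requires $\log(\ell_t/\ell_{t-1})\to 0$, i.e.\ $\beta<\tfrac12$. The fix is a clean three-way split, which is exactly what the paper does. For $\beta=\tfrac12$ one has $\ell_t=e^\kappa\ell_{t-1}$ identically, so $(\ell_t/\ell_{t-1})^{\alpha-1}-1=e^{(\alpha-1)\kappa}-1$ is a fixed nonzero constant and the leading term is $\ell_{t-1}^{1-\alpha}J(\ell_{t-1})(1-e^{(1-\alpha)\kappa})(1+o(1))$, using only $\mathrm{(SV_0)}$ to say $J(\ell_t)/J(\ell_{t-1})\to 1$; then $\theta_{t+1}-\theta_t=(\alpha-1)\kappa+o(1)>0$ eventually. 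For $\beta\in(\tfrac12,1)$ one has $\log\ell_t-\log\ell_{t-1}\to\infty$, so $\ell_t^{1-\alpha}J(\ell_t)=o(\ell_{t-1}^{1-\alpha}J(\ell_{t-1}))$ (because $\log J(x)=o(\log x)$) and the leading term is simply $\ell_{t-1}^{1-\alpha}J(\ell_{t-1})(1+o(1))$; monotonicity then follows from $\log\ell_t-\log\ell_{t-1}\to\infty$ dominating the $\log J$ increments. Once you make the case split explicit, the argument closes in all three regimes.
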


\begin{proof}
	%We proceed as in the previous subsection. 
	First note that the $o(\dots)$ term appearing in assumption \eqref{eq:assumption_a_n_2} for $a_n$, when taking $n=i_{t-1}$, equals
	\[
		\petito{ \ell_{t-1}^{1-\alpha} \cdot J(\ell_{t-1}) 
			\cdot (\log i_{t-1})^{(4\beta-2)\wedge 0}}
		= \petito{ \ell_{t-1}^{1-\alpha} \cdot J(\ell_{t-1}) 
			\cdot \left( t^{2 \cdot \frac{2\beta-1}{1-\beta}} \wedge 1 \right)},
	\]
	and the one for $n=i_t$ can be included in this one. 	
	Moreover, note that the derivative of $x\mapsto \exp(\log^\beta x)$ is smaller than $1$ for $x\geq 1$ so that from the definition of $\ell_t$ and $i_t$ we have $\lvert \exp(\log^\beta(i_t))-\ell_t \rvert \leq 1$.
	Hence, we get
	\begin{align}\label{eq:ait moins ait-1 en fonction de lt}
		a_{i_t}-a_{i_{t-1}}
%		= \int_{\exp(\log^\beta(i_{t-1}))}^{\exp(\log^\beta(i_{t}))} x^{-\alpha} L(x) \diff x 
%		= \exp \left( -(\alpha-1) \log^\beta i_{t-1} \right) 
%		\cdot J\left( \exp \left( \log^\beta i_{t-1} \right) \right) -\exp \left( -(\alpha-1) \log^\beta i_{t} \right) 
%		\cdot J\left( \exp \left( \log^\beta i_{t} \right) \right) 
%		+ \petito{ \ell_{t-1}^{1-\alpha} \cdot J(\ell_{t-1}) 
%			\cdot \left( t^{2 \cdot \frac{2\beta-1}{1-\beta}} \wedge 1 \right)}.
= \ell_{t-1}^{1-\alpha} \cdot J(\ell_{t-1}) -\ell_{t}^{1-\alpha} \cdot J(\ell_{t}) +   \petito{ \ell_{t-1}^{1-\alpha} \cdot J(\ell_{t-1}) 
	\cdot \left( t^{2 \cdot \frac{2\beta-1}{1-\beta}} \wedge 1 \right)}
	\end{align}
%
%	Using that fact to change the bounds of the integral we get
%	\begin{align}
%		a_{i_t}-a_{i_{t-1}}
%		& = \int_{\ell_{t-1}}^{\ell_t} x^{-\alpha} L(x) \diff x + O(\ell_t^{-\alpha}L(\ell_t))+O(\ell_{t-1}^{-\alpha}L(\ell_{t-1})) 
%		+ \petito{ \ell_{t-1}^{1-\alpha} \cdot L(\ell_{t-1}) 
%			\cdot \left( t^{2 \cdot \frac{2\beta-1}{1-\beta}} \wedge 1 \right)} \nonumber \\
%		& = \int_{\ell_{t-1}}^{\ell_t} x^{-\alpha} L(x) \diff x  
%		+ \petito{ \ell_{t-1}^{1-\alpha} \cdot L(\ell_{t-1}) 
%			\cdot \left( t^{2 \cdot \frac{2\beta-1}{1-\beta}} \wedge 1 \right)}. \label{eq:diff_a}
%	\end{align}
	We now distinguish the three cases $\beta\in \intervalleoo{0}{\frac12}$, $\beta=\frac12$ and $\beta\in\intervalleoo{\frac12}{1}$.
	\medskip
	
	\noindent 
	\textit{Case $\beta\in \intervalleoo{0}{\frac12}$}. 
	In this case, we first use Lemma~\ref{lem:from_J_to_L} and the assumption that $J$ satisfies \hyperref[hyp_SV_k]{$(\mathrm{SV}_{2})$} to deduce that
	there exists a function $L \colon [1,\infty) \to (0,\infty)$ satisfying \hyperref[hyp_SV_k]{$(\mathrm{SV}_1)$} such that $L(x) \sim (\alpha-1) J(x)$ and, for $x$ large enough, $x^{1-\alpha} J(x) = \int_x^\infty y^{-\alpha} L(y) \diff y$, so that we can rewrite 
	\begin{align*}
		\ell_{t-1}^{1-\alpha} \cdot J(\ell_{t-1}) -\ell_{t}^{1-\alpha} \cdot J(\ell_{t})= \int_{\ell_{t-1}}^{\ell_t} y^{-\alpha} L(y) \diff y.
	\end{align*}
	Now, since $\frac{2\beta-1}{1-\beta} < 0$ so it follows from Lemma~\ref{lem:def_ell_t_2}\ref{it:quickly converging log lt - log lt-1} that $\frac{\ell_t}{\ell_{t-1}}\rightarrow 1$ and we can apply Lemma~\ref{lem:integral_slowly_varying} and plug the result back into \eqref{eq:ait moins ait-1 en fonction de lt} to get
	\begin{align*}
		a_{i_t}-a_{i_{t-1}}
		&= \ell_t^{-\alpha} (\ell_t-\ell_{t-1}) L(\ell_t)
		\left( 1 + \frac{\alpha}{2} \cdot \frac{\ell_t-\ell_{t-1}}{\ell_t} (1+o(1))\right) + \petito{ \ell_{t-1}^{1-\alpha} \cdot J(\ell_{t-1}) 
			\cdot t^{2 \cdot \frac{2\beta-1}{1-\beta}}} \\
		&= \ell_t^{1-\alpha} \cdot L(\ell_t) \cdot \frac{\ell_t-\ell_{t-1}}{\ell_t} \cdot  \left( 1 + \frac{\alpha}{2} \cdot \frac{\ell_t-\ell_{t-1}}{\ell_t} + \petito{t^{\frac{2\beta -1}{1-\beta}}}\right),
	\end{align*} 
	where we used that $\frac{\ell_t-\ell_{t-1}}{\ell_t} \sim \frac{\kappa \beta}{1-\beta} \cdot t^{\frac{2\beta-1}{1-\beta}} $ by Lemma~\ref{lem:def_ell_t_2}\ref{it:quickly converging lt - lt-1}, and the fact that $L(x) \sim (\alpha-1) J(x)$ to factor in the error term.
	This ensures that 
	\begin{align*}
		\theta_{t}- \theta_{t-1} 
		& = (\alpha-1)(\log \ell_t - \log \ell_{t-1}) - (\log L(\ell_t)- \log L(\ell_{t-1})) - \log\left(\frac{\ell_t-\ell_{t-1}}{\ell_t}\cdot  \frac{\ell_{t-1}}{\ell_{t-1}-\ell_{t-2}}\right) \\
		& \quad {} - \left(\log\left( 1 + \frac{\alpha}{2} \cdot \frac{\ell_t-\ell_{t-1}}{\ell_t} + \petito{t^{\frac{2\beta -1}{1-\beta}}}\right) - 
		\log \left( 1 + \frac{\alpha}{2} \cdot \frac{\ell_{t-1}-\ell_{t-2}}{\ell_{t-1}} + \petito{t^{\frac{2\beta -1}{1-\beta}}}\right)\right).
	\end{align*}
	The first term is directly handled by Lemma~\ref{lem:def_ell_t_2}\ref{it:quickly converging log lt - log lt-1}. 
	For the second term, by Lemma~\ref{lem:slowly_varying}\ref{it:slowly_varying_1}, we have 
	\begin{align*}
	\log(L(\ell_t))-\log(L(\ell_{t-1}))
	&= \petito{\log(\ell_t)-\log(\ell_{t-1})}
	= \petito{t^\frac{2\beta -1}{1-\beta}}, \label{eq:diff_log_L_2}
	\end{align*}
	using again Lemma~\ref{lem:def_ell_t_2}\ref{it:quickly converging log lt - log lt-1} in the last equality.
	For the third term, Lemma~\ref{lem:def_ell_t_2}\ref{it:quickly converging lt - lt-1} yields
	\begin{align*}
		\log\left(\frac{\ell_t-\ell_{t-1}}{\ell_t}\cdot  \frac{\ell_{t-1}}{\ell_{t-1}-\ell_{t-2}}\right)
		= \log\left(\left(\frac{t}{t-1}\right)^{\frac{2\beta-1}{1-\beta}}
		\cdot 
		\frac{1-\frac{\kappa \beta}{2(1-\beta)} t^{\frac{2\beta-1}{1-\beta}} + \petito{t^{\frac{2\beta-1}{1-\beta}}}}{1-\frac{\kappa \beta}{2(1-\beta)} (t-1)^{\frac{2\beta-1}{1-\beta}} + \petito{t^{\frac{2\beta-1}{1-\beta}}}}\right)
		= \petito{t^{\frac{2\beta-1}{1-\beta}}},
	\end{align*}
	noting that $-1 < \frac{2\beta-1}{1-\beta} < 0$.
	In order to control the last term, we use the expansion $\log(1+x)=x+O(x^2)$  and the fact that the expansion in Lemma~\ref{lem:def_ell_t_2}\ref{it:quickly converging lt - lt-1} gives 
	\[
	\frac{\ell_t-\ell_{t-1}}{\ell_t} - \frac{\ell_{t-1}-\ell_{t-2}}{\ell_{t-1}}= \petito{t^{\frac{2\beta-1}{1-\beta}}}
	\qquad \text{and} \qquad  \left(\frac{\ell_t-\ell_{t-1}}{\ell_t}\right)^2=\petito{t^{\frac{2\beta-1}{1-\beta}}}.
	\]
	Putting everything together we get
	\begin{align*}
		\theta_t-\theta_{t-1} = (\alpha-1) \frac{\kappa \beta}{1-\beta} \cdot  t^{\frac{2\beta-1}{1-\beta}} + \petito{t^\frac{2\beta -1}{1-\beta}},
	\end{align*}
	which is eventually positive when $t$ is large, so the sequence $(\theta_t)_{t\geq s_0}$ is eventually non-decreasing as claimed. 
	The claim on the asymptotic expansion of $\theta_t$ might be obtained for example by summing the expansion that we have for the increments.  
	\medskip
	
	\noindent 
	\textit{Case $\beta=\frac12$}.
%	 Let $J$ be the auxiliary function defined in \eqref{eq:definition J}. We will use here the properties of $J$ shown in Lemma~\ref{lem:auxiliary function J}.
	Recalling $\ell_t = e^{\kappa} \ell_{t-1}$, we write
	\[
%	\int_{\ell_{t-1}}^{\ell_t} x^{-\alpha} L(x) \diff x
%	=
	 \ell_{t-1}^{1-\alpha} \cdot J(\ell_{t-1}) - \ell_t^{1-\alpha} \cdot J(\ell_t) 
	= \ell_{t-1}^{1-\alpha} \cdot J(\ell_{t-1}) \cdot 
	\left( 1 - e^{(1-\alpha)\kappa} \frac{J(\ell_t)}{J(\ell_{t-1})} \right).
	\]
%	Since J satisfies \hyperref[hyp_SV_k]{$(\mathrm{SV}_1)$}, we can apply Lemma~\ref{lem:slowly_varying}\ref{it:slowly_varying_1} to $J$ to get $\log J(\ell_t) - \log J(\ell_{t-1}) = o(\log\ell_t-\log\ell_{t-1}) = o(1)$, since $\log\ell_t-\log\ell_{t-1} = \kappa$.
	Since $J$ satisfies \ref{hyp_SV_0} and $\ell_t = e^{\kappa} \ell_{t-1}$ and $\ell_t\to \infty$ as $t\rightarrow \infty$ we have
	$J(\ell_t)/J(\ell_{t-1}) = 1 + o(1)$, so coming back to \eqref{eq:ait moins ait-1 en fonction de lt} we get
	\[
	a_{i_t}-a_{i_{t-1}}
%	= \int_{\ell_{t-1}}^{\ell_t} x^{-\alpha} L(x) \diff x  
%	+ \petito{ \ell_{t-1}^{1-\alpha} \cdot L(\ell_{t-1})}
	= \ell_{t-1}^{1-\alpha} \cdot J(\ell_{t-1}) \cdot 
	\left( 1 - e^{(1-\alpha)\kappa} \right)\cdot  (1+o(1)).
	\]
	Hence, it follows that
	\[
	\theta_t 
	= (\alpha-1) \log \ell_{t-1} - \log J(\ell_{t-1}) - \log \left( 1 - e^{(1-\alpha)\kappa} \right) + o(1).
	\]
	This proves the asymptotics of $\theta_t$ in \eqref{eq:expansion_theta_t_2}.
	It remains to prove that $(\theta_t)_{t\geq 1}$ is eventually non-decreasing:
	using again $\log J(\ell_t) - \log J(\ell_{t-1}) = o(1)$, we have
	$\theta_{t+1} - \theta_t = (\alpha-1) \kappa  + o(1)$,
	which is positive for $t$ large enough.%
	\medskip
	
	\noindent 
	\textit{Case $\beta\in \intervalleoo{\frac12}{1}$}.
%	 With the function $J$ defined as before, 
In this case we can write
	\begin{align*}
%	\int_{\ell_{t-1}}^{\ell_t} x^{-\alpha} L(x) \diff x
%	& = 
	\ell_{t-1}^{1-\alpha} \cdot J(\ell_{t-1}) - \ell_t^{1-\alpha} \cdot J(\ell_t) 
	= \ell_{t-1}^{1-\alpha} \cdot J(\ell_{t-1}) \cdot (1 + o(1)),
	\end{align*}
	using that $\ell_{t-1} = o(\ell_t)$ and $\log J(x) = o(\log x)$ by Lemma~\ref{lem:slowly_varying}\ref{it:slowly_varying_0}.
	This yields, plugging this back into \eqref{eq:ait moins ait-1 en fonction de lt}, 
	\[
	\theta_t = -\log(a_{i_t}-a_{i_{t-1}})
	= (\alpha-1) \log \ell_{t-1} - \log J(\ell_{t-1}) + o(1).
	\]
	This proves the asymptotics \eqref{eq:expansion_theta_t_2}.
	The fact that $(\theta_t)_{t\geq 1}$ is eventually non-decreasing follows again from $\log J(\ell_t) - \log J(\ell_{t-1}) = o(\log\ell_t-\log\ell_{t-1})$ by Lemma~\ref{lem:slowly_varying}\ref{it:slowly_varying_0}, together with the fact that 
	$\log \ell_t - \log \ell_{t-1} \to \infty$ by Lemma~\ref{lem:def_ell_t_2}\ref{it:quickly converging log lt - log lt-1}.
\end{proof}

\begin{lemma} \label{lem:estimation_key_quantity_2}
	Let $\kappa > 0$ be such that $\eta \coloneqq \kappa^{\frac{1}{\beta}}-(\alpha-1)(1-\beta) \kappa$ is non-zero. 
	Let $i_t$ be defined as in \eqref{eq:def_i_t_2} and $\theta_t$ as in \eqref{eq:def_theta_t_2}. 
	Then, as $t \to \infty$,
	\begin{equation} \label{eq:key_quantity_2}
	i_t \cdot \exp \left( \sum_{i=2}^{i_t} (e^{\theta_{t_i}} -1) \frac{w_i}{W_i} - \sum_{r=1}^t \theta_r \right) 
	= \exp \left( \eta \cdot t^{\frac{1}{1-\beta}} \cdot (1+o(1)) \right).
	\end{equation}
	Moreover, as $s \to \infty$, uniformly in $t \geq s$,
	\begin{equation} \label{eq:partial_key_quantity_2}
	\frac{i_t}{i_s} \cdot \exp \left( \sum_{i=i_s+1}^{i_t} (e^{\theta_{t_i}} -1) \frac{w_i}{W_i} - \sum_{r=s+1}^t \theta_r \right) 
	= \exp \left( \eta \cdot \left( t^{\frac{1}{1-\beta}} - s^{\frac{1}{1-\beta}} \right) \cdot (1+o(1)) \right).
	\end{equation}
\end{lemma}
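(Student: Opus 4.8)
The plan is to follow the proof of Lemma~\ref{lem:estimation_key_quantity_1} almost line for line, substituting the expansions available in the present regime. I would first prove the partial estimate \eqref{eq:partial_key_quantity_2}; the estimate \eqref{eq:key_quantity_2} then follows either by running the same computation with $s=0$ (the finitely many indices $r\le s_0$, on which $\theta_r=0$, contributing only a bounded additive term to the exponent) or, as remarked in the proof of Lemma~\ref{lem:estimation_key_quantity_1}, by letting $s\to\infty$ slowly relative to $t$. All $\petito{\cdot}$ and $\grandO{\cdot}$ below are meant as $s\to\infty$, uniformly in $t\ge s$; since the case $t=s$ is trivial, I take $t\ge s+1$. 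Writing the left-hand side of \eqref{eq:partial_key_quantity_2} as an exponential, its exponent is $\log i_t-\log i_s$ plus the drift term $\sum_{i=i_s+1}^{i_t}(e^{\theta_{t_i}}-1)w_i/W_i$ minus $\sum_{r=s+1}^t\theta_r$. From \eqref{eq:def_i_t_2}, $\log i_t-\log i_s=\kappa^{1/\beta}(t^{1/(1-\beta)}-s^{1/(1-\beta)})+\grandO{1}$.

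For the drift term I would group indices $i$ by the value of $t_i$ and use the identity $e^{\theta_r}(a_{i_r}-a_{i_{r-1}})=1$ valid for $r>s_0$ (the definition \eqref{eq:def_theta_t_2}): this gives $\sum_{i=i_s+1}^{i_t}(e^{\theta_{t_i}}-1)w_i/W_i=\sum_{r=s+1}^t(e^{\theta_r}-1)(a_{i_r}-a_{i_{r-1}})=(t-s)-\sum_{r=s+1}^t e^{-\theta_r}=(t-s)+\petito{1}$, the tail sum vanishing because $\theta_r\sim(\alpha-1)\kappa\,r^{\beta/(1-\beta)}\to\infty$ by \eqref{eq:expansion_theta_t_2}. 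For the $\theta$-sum, I would feed the first-order asymptotic $\theta_r\sim(\alpha-1)\kappa\,r^{\beta/(1-\beta)}$ into a standard integral comparison, $\sum_{r=s+1}^t r^{\beta/(1-\beta)}=(1-\beta)(t^{1/(1-\beta)}-s^{1/(1-\beta)})(1+\petito{1})$, to get $\sum_{r=s+1}^t\theta_r=(\alpha-1)(1-\beta)\kappa\,(t^{1/(1-\beta)}-s^{1/(1-\beta)})(1+\petito{1})$. Adding the three contributions, the exponent equals $\eta\,(t^{1/(1-\beta)}-s^{1/(1-\beta)})+\petito{t^{1/(1-\beta)}-s^{1/(1-\beta)}}$ with $\eta=\kappa^{1/\beta}-(\alpha-1)(1-\beta)\kappa$; since $\eta\ne0$ by hypothesis this rewrites as $\eta\,(t^{1/(1-\beta)}-s^{1/(1-\beta)})(1+\petito{1})$, giving \eqref{eq:partial_key_quantity_2}.

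The one step that needs genuine care — and the only reason the hypothesis $\eta\ne0$ appears — is the \emph{uniformity} in $t\ge s$ of all these error terms. The recurring point is that the quantities to be discarded, namely $t-s$, the $\grandO{1}$ boundary errors, and the sum-versus-integral discrepancy in $\sum r^{\beta/(1-\beta)}$ (which is $\grandO{t^{\beta/(1-\beta)}-s^{\beta/(1-\beta)}}$), must all be $\petito{t^{1/(1-\beta)}-s^{1/(1-\beta)}}$ uniformly. After the substitution $u=t/s\ge1$ each of these reduces to the elementary inequality $0\le (u^a-1)/(u^b-1)\le 1$ for $0\le a\le b$ and $u\ge1$ (together with $t^{1/(1-\beta)}-s^{1/(1-\beta)}\ge(1-\beta)^{-1}s^{\beta/(1-\beta)}\to\infty$), which makes every such ratio $\grandO{s^{-c}}$ for a suitable $c>0$. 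The passage from the first-order asymptotic of $\theta_r$ to the uniform estimate for $\sum_{r=s+1}^t\theta_r$ is handled the same way: once $|\theta_r/((\alpha-1)\kappa\,r^{\beta/(1-\beta)})-1|\le\varepsilon$ for all large $r$, the bound transfers verbatim to the partial sums provided $s$ is large. I expect this bookkeeping of uniform remainders to be the main, and essentially the only, obstacle; everything else is a direct transcription of the proof of Lemma~\ref{lem:estimation_key_quantity_1}.
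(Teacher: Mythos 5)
Your proof is correct and follows the same route as the paper's: sum the asymptotic $\theta_r \sim (\alpha-1)\kappa\, r^{\beta/(1-\beta)}$ by integral comparison, dispatch the drift sum via the identity $e^{\theta_r}(a_{i_r}-a_{i_{r-1}})=1$, and read off $\log i_t - \log i_s$ from \eqref{eq:def_i_t_2}. The only difference is presentational — the paper states the uniformity in a single line, whereas you spell out the $u=t/s$ reduction and the elementary ratio bound that makes it work; that extra bookkeeping is accurate and is indeed what underlies the paper's terser "uniformly in $t\geq s+1$."
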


\begin{proof}
	We focus on the proof of \eqref{eq:partial_key_quantity_2}, because \eqref{eq:key_quantity_2} follows from the same lines. The case $s=t$ is obvious so we assume $t \geq s+1$. %, so that $o(1)$ terms can be included in $o(t-s)$. 
	The following asymptotics are meant to hold as $s \to \infty$, uniformly in $t \geq s+1$.
	Summing the asymptotic equivalent in \eqref{eq:expansion_theta_t_2}, we get 
	\begin{align*}
		\sum_{r=s+1}^t \theta_r
		\sim (\alpha - 1) \kappa \cdot \int_s^t x^{\frac{\beta}{1-\beta}} \diff x
		\sim (\alpha - 1) (1-\beta) \kappa \cdot
		\left( t^{\frac{1}{1-\beta}} - s^{\frac{1}{1-\beta}} \right).
	\end{align*}
	On the other hand, it follows from $e^{\theta_r} (a_{i_r} - a_{i_{r-1}})  = 1$ that $\sum_{i=i_s+1}^{i_t} (e^{\theta_{t_i}} -1) \frac{w_i}{W_i} \sim t-s = o(t^{\frac{1}{1-\beta}} - s^{\frac{1}{1-\beta}})$.
	Then, using the definition of $i_t$, we get \eqref{eq:partial_key_quantity_2}.
\end{proof}

\begin{proof}[Proof of Theorem~\ref{thm:quickly}\ref{it:asymptotic height beta in (0,1)}]
	Let $i_t$ and $\theta_t$ defined as in \eqref{eq:def_i_t_2} and \eqref{eq:def_theta_t_2} for some fixed $\kappa > 0$.
	It follows from the definition of $i_t$ that 
	\[
	t_n = \kappa^{-\frac{1-\beta}{\beta}} (\log n)^{1-\beta}.
	\]
	Moreover, we can check that the quantity $\eta = \kappa^{\frac{1}{\beta}}-(\alpha-1)(1-\beta) \kappa$ appearing in Lemma~\ref{lem:estimation_key_quantity_2} is of the same sign as $\kappa - (\alpha -1)^{\frac{\beta}{1-\beta}}(1- \beta)^{\frac{\beta}{1-\beta}}$. 
	\medskip 
	
	\noindent
	\textit{Upper bound.} We choose $\kappa <(\alpha -1)^{\frac{\beta}{1-\beta}}(1- \beta)^{\frac{\beta}{1-\beta}}$ so that $\eta < 0$ and therefore \eqref{eq:ass_upper_bound} follows from \eqref{eq:key_quantity_2}. Hence, we can apply Proposition~\ref{prop:upper_bound} to get 
	$\haut(\ttT_{n})\leq t_n + O(1)$ almost surely. Letting $\kappa \to (\alpha -1)^{\frac{\beta}{1-\beta}}(1- \beta)^{\frac{\beta}{1-\beta}}$ yields the upper bound.
	\medskip 
	
	\noindent
	\textit{Lower bound.} We fix some $\kappa > (\alpha -1)^{\frac{\beta}{1-\beta}}(1- \beta)^{\frac{\beta}{1-\beta}}$, so that $\eta > 0$, and we aim at checking that \eqref{eq:ass_lower_bound} is satisfied to apply Proposition~\ref{prop:lower_bound} with $T(n) \coloneqq (\log n)^{(1-\beta)^{3/2}}$.
	Using assumption \eqref{eq:ass_somme_carre}, we have
	\begin{align*}
	\delta_1(n) 
	& = \sum_{r \geq T(n)+1} e^{2\theta_{r}} \sum_{i=i_{r-1}+1}^{i_r} \frac{w_i^2}{W_i^2} 
	\leq \sum_{r \geq T(n)+1} e^{2\theta_{r}} \cdot O \left( \frac{1}{i_{r-1}} \right)
	= \sum_{r \geq T(n)+1} \exp \left( - \kappa^\frac{1}{\beta} r^\frac{1}{1-\beta} (1+o(1)) \right),
	\end{align*}
	using \eqref{eq:def_i_t_2} and \eqref{eq:expansion_theta_t_2}.
	It follows that 
	\begin{align*} 
	\sum_{t \geq 1} \delta_1(i_t) 
	= \sum_{t \geq 1} O \left( \exp \left( - \kappa^\frac{1}{\beta} T(i_t)^\frac{1}{1-\beta} \right) \right)
	= \sum_{t \geq 1} O \left( \exp \left( - \kappa^{(1+(1-\beta)^{1/2})/\beta} \cdot t^{(1-\beta)^{-1/2}} \right) \right)
	< \infty,
	\end{align*}
	proving the first part of \eqref{eq:ass_lower_bound}.
	We now aim at checking the second part of \eqref{eq:ass_lower_bound}.
	It follows from the first part of Lemma~\ref{lem:E(n)} that $E(i_t) \geq \exp(-Ct)$.
	For $\delta_2(n)$ we proceed as in \eqref{eq:bound_delta_2_1}: using \eqref{eq:ass_somme_carre} and \eqref{eq:partial_key_quantity_2}, we have
	\begin{align*}
	\delta_2(n) 
%	& = \sum_{r \geq T(n)+1} e^{2\theta_{r}}
%	\cdot \exp \left( \sum_{s=T(n)+1}^{r-1} \theta_s
%	- \sum_{i=i_{T(n)}+1}^{i_{r-1}} (e^{\theta_{t_i}} -1) \frac{w_i}{W_i} \right)
%	\cdot \sum_{i=i_{r-1}+1}^{i_r} \frac{w_i^2}{W_i^2} \\
	%%
	& \leq \sum_{r \geq T(n)+1} e^{2\theta_{r}}
	\cdot \frac{i_{r-1}}{i_{T(n)}} 
	\exp \left( 
	-\eta \cdot \left( r^{\frac{1}{1-\beta}} - (T(n)+1)^{\frac{1}{1-\beta}} \right) \cdot (1+o(1)) 
	\right)
	\cdot O \left( \frac{1}{i_{r-1}} \right) \\
	& \leq \frac{O(1)}{i_{T(n)}} \sum_{s \geq 0} 
	\exp \left( 
	4 (\alpha-1)\kappa \cdot (s+T(n)+1)^{\frac{\beta}{1-\beta}}
	- \frac{\eta}{2} \cdot \left( (s+T(n)+1)^{\frac{1}{1-\beta}} - (T(n)+1)^{\frac{1}{1-\beta}} \right) 
	\right),
	\end{align*}
	for $n$ large enough, using \eqref{eq:expansion_theta_t_2} to bound $\theta_r$.
	Then, for any real $a,b >0$ and integer $M \geq 1$, we have
	\begin{align*}
	& \sum_{s \geq 0} 
	\exp \left( 
	a (s+M)^{\frac{\beta}{1-\beta}}
	- b \cdot \left( (s+M)^{\frac{1}{1-\beta}} - M^{\frac{1}{1-\beta}} \right) 
	\right) \\
	& \leq \sum_{s = 0}^{M-1} \exp \left( a (2M)^{\frac{\beta}{1-\beta}} \right)
	+ \sum_{s \geq M} 
	\exp \left( 
	a (2s)^{\frac{\beta}{1-\beta}}
	- b \cdot s^{\frac{1}{1-\beta}}
	\right) 
	\leq M \exp \left( a (2M)^{\frac{\beta}{1-\beta}} \right) + C(a,b),
	\end{align*}
	bounding the second sum by the same sum starting at $s=0$.
	It follows that
	\begin{align*}
	\delta_2(n) 
	& \leq \frac{1}{i_{T(n)}} \exp \left( \grandO{T(n)^{\frac{\beta}{1-\beta}}} \right)
	= \exp \left( \grandO{T(n)^{\frac{\beta}{1-\beta}}} - \kappa^{\frac{1}{\beta}} T(n)^{\frac{1}{1-\beta}} \right).
	\end{align*}
	Recalling that $E(i_t) \geq \exp(-Ct)$ and $T(i_t) = (\log i_t)^{(1-\beta)^{3/2}} \sim \kappa^{(1-\beta)^{3/2}/\beta} \cdot t^{(1-\beta)^{1/2}}$, the second part of \eqref{eq:ass_lower_bound} follows.
	Hence, we can apply Proposition~\ref{prop:lower_bound} to get that almost surely, for $n$ large enough, $\haut(\ttT_n) \geq t_n - T(n) - 1$. Letting $\kappa \to (\alpha -1)^{\frac{\beta}{1-\beta}}(1- \beta)^{\frac{\beta}{1-\beta}}$ yields the lower bound.
\end{proof}

%%%%%%%%%%%%%%%%%%%%%%%%%
\subsubsection{Case \texorpdfstring{$\beta=1$}{beta=1}}
%%%%%%%%%%%%%%%%%%%%%%%%%

Let $\kappa > 1$. 
For $t \geq 1$, we define 
\begin{equation} 
\ell_t \coloneqq \exp\left(\kappa^t \right)
\end{equation}
and 
\begin{equation}\label{eq:def_i_t beta=1}
i_t \coloneqq 
\begin{cases}
t+1, & \text{if } 0 \leq t < s_0, \\
\lfloor \ell_t \rfloor= \lfloor \exp \left(\kappa^t \right) \rfloor, 
& \text{if } t \geq s_0,
\end{cases}
\end{equation}
by choosing some $s_0$ large enough so that $(i_t)_{t\geq 1}$ is increasing.

\begin{lemma} \label{lem:theta_t beta=1}
	Let $\kappa > 1$. Let $i_t$ be defined as in \eqref{eq:def_i_t beta=1} and
	\begin{equation} \label{eq:def_theta_t beta=1}
	\theta_t \coloneqq
	\begin{cases}
	0, & \text{if } 1 \leq t \leq s_0, \\
	-\log(a_{i_t}-a_{i_{t-1}}), & \text{if } t > s_0.
	\end{cases}
	\end{equation}
	Then, there exists $r_0 \geq s_0$ such that $(\theta_t)_{t\geq r_0}$ is a non-decreasing sequence of non-negative numbers.
	Moreover, as $t \to \infty$,
	\begin{equation} \label{eq:expansion_theta_t beta=1}
	\theta_t \sim (\alpha -1) \kappa^{t-1}.
	\end{equation}
\end{lemma}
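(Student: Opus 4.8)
The plan is to mirror the argument of Lemma~\ref{lem:theta_t_2} in the case $\beta\in(\tfrac12,1)$: here the ratio $i_t/i_{t-1}$ diverges so fast that, among the two tails appearing in $a_{i_t}-a_{i_{t-1}}=(a_\infty-a_{i_{t-1}})-(a_\infty-a_{i_t})$, the second one is negligible, and this is what produces the clean equivalent \eqref{eq:expansion_theta_t beta=1}.

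\textbf{Step 1 (expansion of $a_{i_t}-a_{i_{t-1}}$).} Since $\beta=1$ we have $(4\beta-2)\wedge 0=0$, so assumption \eqref{eq:assumption_a_n_2} reads $a_\infty-a_n=n^{1-\alpha}J(n)(1+o(1))$ with $J$ satisfying \ref{hyp_SV_0}. From $i_t=\lfloor\exp(\kappa^t)\rfloor$ one gets $\log i_t=\kappa^t+o(1)$, hence $i_t/i_{t-1}\to\infty$. I would factor out $i_{t-1}^{1-\alpha}J(i_{t-1})$ and control the ratio of tails on the logarithmic scale:
\[
\log\frac{i_t^{1-\alpha}J(i_t)}{i_{t-1}^{1-\alpha}J(i_{t-1})}=(1-\alpha)(\log i_t-\log i_{t-1})+\big(\log J(i_t)-\log J(i_{t-1})\big).
\]
By Lemma~\ref{lem:slowly_varying}\ref{it:slowly_varying_0}, $\log J(i_s)=o(\log i_s)=\petito{\kappa^s}$, so the bracket is $\petito{\kappa^t}$, whereas $\log i_t-\log i_{t-1}=(\kappa-1)\kappa^{t-1}+o(1)$; since $\alpha>1$ the right-hand side tends to $-\infty$. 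Therefore $i_t^{1-\alpha}J(i_t)/\big(i_{t-1}^{1-\alpha}J(i_{t-1})\big)\to0$ and
\[
a_{i_t}-a_{i_{t-1}}=i_{t-1}^{1-\alpha}J(i_{t-1})\,(1+o(1)),
\]
which in particular is positive for $t$ large enough, so that (taking $s_0$ large) $\theta_t$ in \eqref{eq:def_theta_t beta=1} is well defined.

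\textbf{Step 2 (asymptotics and positivity).} Taking $-\log$ of the previous display gives $\theta_t=(\alpha-1)\log i_{t-1}-\log J(i_{t-1})+o(1)$. Plugging in $\log i_{t-1}=\kappa^{t-1}+o(1)$ and $\log J(i_{t-1})=\petito{\kappa^{t-1}}$ yields $\theta_t=(\alpha-1)\kappa^{t-1}(1+o(1))$, which is \eqref{eq:expansion_theta_t beta=1}, and this is nonnegative for $t$ large since $\alpha>1$ and $\kappa>1$. For monotonicity, from the same formula $\theta_{t+1}-\theta_t=(\alpha-1)(\log i_t-\log i_{t-1})-\big(\log J(i_t)-\log J(i_{t-1})\big)+o(1)$, and exactly as in Step~1, $\log i_t-\log i_{t-1}=(\kappa-1)\kappa^{t-1}+o(1)\to\infty$ while $\log J(i_t)-\log J(i_{t-1})=\petito{\kappa^t}$, so $\theta_{t+1}-\theta_t=(\alpha-1)(\kappa-1)\kappa^{t-1}(1+o(1))>0$ for $t$ large, giving the required $r_0$.

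The only genuinely delicate point is Step~1: one cannot use the defining property of slow variation directly because the argument ratio $i_t/i_{t-1}$ does not stay bounded; the fix is to work on the logarithmic scale and use $\log J(x)=o(\log x)$, which suffices precisely because $1-\alpha<0$ makes the power factor $i^{1-\alpha}$ beat the slowly varying factor by a fixed margin. The handling of the floor function and of the $o(1)$ remainder in \eqref{eq:assumption_a_n_2} is routine.
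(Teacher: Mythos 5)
Your proof is correct and follows essentially the same route as the paper's: both isolate the dominant tail $a_\infty-a_{i_{t-1}}$ by showing the ratio of tails goes to zero on the logarithmic scale via $\log J(x)=o(\log x)$, take $-\log$, and deduce both the asymptotic and eventual monotonicity from the resulting expansion. The paper is somewhat terser on the monotonicity step (just asserting it follows from the expansion, by analogy with the $\beta\in(\tfrac12,1)$ case), while you spell out that $\theta_{t+1}-\theta_t=(\alpha-1)(\kappa-1)\kappa^{t-1}(1+o(1))$; this is a welcome explicitation rather than a different argument.
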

\begin{proof}
	The proof is similar to the case $\beta \in (\frac{1}{2},1)$. We start by writing
	\begin{align*}
	\ell_{t-1}^{1-\alpha} \cdot J(\ell_{t-1}) - \ell_t^{1-\alpha} \cdot J(\ell_t) 
	= \ell_{t-1}^{1-\alpha} \cdot J(\ell_{t-1}) \cdot (1 + o(1)),
	\end{align*}
	using that $\log J(x) = o(\log x)$ by Lemma~\ref{lem:slowly_varying}\ref{it:slowly_varying_0}.
	This allows us to write, using assumption \eqref{eq:assumption_a_n_2},
	$a_{i_t}-a_{i_{t-1}}= \ell_{t-1}^{1-\alpha} \cdot J(\ell_{t-1}) \cdot (1 + o(1))$.
%	\begin{align*}
%	a_{i_t}-a_{i_{t-1}}
%%	& = \int_{i_{t-1}}^{i_{t}} x^{-\alpha} L(x) \diff x 
%%	+ \petito{\ell_t^{1-\alpha}L(\ell_t)} 
%%	+ \petito{\ell_{t-1}^{1-\alpha}L(\ell_{t-1})} \\
%%	& = \int_{\ell_{t-1}}^{\ell_t} x^{-\alpha} L(x) \diff x + O(\ell_t^{-\alpha}L(\ell_t))+O(\ell_{t-1}^{-\alpha}L(\ell_{t-1})) +\petito{\ell_{t-1}^{1-\alpha}L(\ell_{t-1})}\\
%	&= \ell_{t-1}^{1-\alpha} \cdot J(\ell_{t-1}) \cdot (1 + o(1)).
%	\end{align*}
	This entails that 
	\begin{align*}
	\theta_t = -\log(a_{i_t}-a_{i_{t-1}})
	%&=-\log\left(\ell_{t-1}^{1-\alpha} \cdot \frac{L(\ell_{t-1})}{\alpha -1} \cdot (1 + o(1))\right)\\
	= (\alpha -1) \kappa^{t-1} - \log J(\exp(\kappa^{t-1}))
%	 + \log(\alpha -1)
 + \petito{1}
	\sim (\alpha -1) \kappa^{t-1}.
	\end{align*}
	This asymptotic expansion already ensures that the sequence $\theta_t$ is eventually non-decreasing so the lemma is proved.
\end{proof}

\begin{lemma} \label{lem:estimation_key_quantity beta=1}
	Let $\kappa > 1$ such that $\kappa \neq \alpha$.
	Let $i_t$ be defined as in \eqref{eq:def_i_t beta=1} and $\theta_t$ as in \eqref{eq:def_theta_t beta=1}. 
	Then, as $t \to \infty$,
	\begin{equation} \label{eq:key_quantity beta=1}
	i_t \cdot \exp \left( \sum_{i=2}^{i_t} (e^{\theta_{t_i}} -1) \frac{w_i}{W_i} - \sum_{r=1}^t \theta_r \right) 
	= \exp \left( \frac{\kappa -\alpha}{\kappa -1}\cdot  \kappa^t \cdot (1+o(1)) \right).
	\end{equation}
	Moreover, as $s \to \infty$, uniformly in $t \geq s$,
	\begin{equation} \label{eq:partial_key beta=1}
	\exp \left( \sum_{r=s+1}^t \theta_r - \sum_{i=i_s+1}^{i_t} (e^{\theta_{t_i}} -1) \frac{w_i}{W_i} \right) 
	= \exp \left( \frac{\alpha-1}{\kappa -1}\cdot (\kappa^t-\kappa^s) \cdot (1+\petito{1}) \right).
	\end{equation}
%	\begin{equation} \label{eq:partial_key beta=1}
%	\frac{i_t}{i_s} \cdot \exp \left( \sum_{i=i_s+1}^{i_t} (e^{\theta_{t_i}} -1) \frac{w_i}{W_i} - \sum_{r=s+1}^t \theta_r \right) 
%	= \exp \left( \frac{\kappa -\alpha}{\kappa -1}\cdot  (\kappa^t-\kappa^s) \cdot (1+o(1)) \right).
%	\end{equation}
\end{lemma}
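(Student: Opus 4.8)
The plan is to estimate separately the two sums $\sum_{r=s+1}^t \theta_r$ and $\sum_{i=i_s+1}^{i_t}(e^{\theta_{t_i}}-1)\frac{w_i}{W_i}$, exactly as in the proofs of Lemma~\ref{lem:estimation_key_quantity_1} and Lemma~\ref{lem:estimation_key_quantity_2}, and then combine them with the explicit expression for $i_t = \exp(\kappa^t)(1+o(1))$. I would focus on proving \eqref{eq:partial_key beta=1} (uniformly in $t \geq s+1$ as $s\to\infty$), since \eqref{eq:key_quantity beta=1} follows by the same argument together with an extra factor $i_t = \exp(\kappa^t(1+o(1)))$: indeed, combining the two displays, the left-hand side of \eqref{eq:key_quantity beta=1} equals $i_t \cdot \exp\bigl(-\sum_{r=1}^t\theta_r + \sum_{i=2}^{i_t}(e^{\theta_{t_i}}-1)\frac{w_i}{W_i}\bigr) = \exp\bigl(\kappa^t(1+o(1)) - \frac{\alpha-1}{\kappa-1}\kappa^t(1+o(1))\bigr) = \exp\bigl(\frac{\kappa-\alpha}{\kappa-1}\kappa^t(1+o(1))\bigr)$ after checking the finitely many small-index terms are harmless.

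For the first sum, I would use the asymptotic equivalent $\theta_r \sim (\alpha-1)\kappa^{r-1}$ from \eqref{eq:expansion_theta_t beta=1}. Since the terms grow geometrically, the sum is dominated by its last term and
\[
\sum_{r=s+1}^t \theta_r = (\alpha-1)\sum_{r=s+1}^t \kappa^{r-1}(1+o(1)) = \frac{\alpha-1}{\kappa-1}\bigl(\kappa^t - \kappa^s\bigr)(1+o(1)),
\]
uniformly in $t\geq s+1$; here one has to be slightly careful that the $o(1)$ in $\theta_r \sim (\alpha-1)\kappa^{r-1}$ is uniform, but that is automatic since it is a genuine asymptotic as $r\to\infty$ and every index $r$ in the range exceeds $s$. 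For the second sum, I would use the defining relation $e^{\theta_r}(a_{i_r}-a_{i_{r-1}}) = 1$ (valid for $r > s_0$, by \eqref{eq:def_theta_t beta=1}), which gives
\[
\sum_{i=i_s+1}^{i_t}(e^{\theta_{t_i}}-1)\frac{w_i}{W_i} = \sum_{r=s+1}^t (e^{\theta_r}-1)(a_{i_r}-a_{i_{r-1}}) = \sum_{r=s+1}^t (1 - e^{-\theta_r}) = (t-s) + O(1),
\]
since $\theta_r \to \infty$ so $\sum_r e^{-\theta_r}$ converges. Because $t-s = o(\kappa^t - \kappa^s)$ as $s\to\infty$ (the right-hand side grows at least like $\kappa^{s}(\kappa-1) \geq \kappa^s(\kappa-1) \to \infty$ faster than linearly in $t-s$ once $s$ is large — more precisely $\kappa^t-\kappa^s \geq \kappa^s(\kappa^{t-s}-1) \geq \kappa^s(t-s)\log\kappa$), this term is absorbed into the error, and subtracting the two displays yields \eqref{eq:partial_key beta=1}.

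The main obstacle, such as it is, is the bookkeeping of uniformity: one must be sure that every $o(1)$ and $O(1)$ above is uniform over $t\geq s+1$ as $s\to\infty$, and handle the case $s=t$ (trivial) separately, as is done in the analogous lemmas. A secondary point to check carefully is that the relation $\lvert i_t - \ell_t\rvert \leq 1$ translates into $\log i_t = \kappa^t + O(\kappa^{-t}) = \kappa^t(1+o(1))$ and more importantly that it does not disturb $a_{i_t}-a_{i_{t-1}}$ — but this is exactly what was already verified in the proof of Lemma~\ref{lem:theta_t beta=1}, so it can be quoted. None of the steps requires any genuinely new idea beyond what appears in the $\beta<1$ cases; the geometric (rather than stretched-exponential) growth of $\theta_t$ actually makes the estimates cleaner here.
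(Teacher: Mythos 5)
Your proposal is correct and follows essentially the same route as the paper: sum the asymptotic $\theta_r \sim (\alpha-1)\kappa^{r-1}$ to get $\sum_{r=s+1}^t\theta_r = \frac{\alpha-1}{\kappa-1}(\kappa^t-\kappa^s)(1+o(1))$, observe via $e^{\theta_r}(a_{i_r}-a_{i_{r-1}})=1$ that the other sum is $t-s+o(t-s)=o(\kappa^t-\kappa^s)$, and combine with $\log i_t = \kappa^t+O(1)$. Your bookkeeping (tail of $\sum e^{-\theta_r}$, and $\kappa^t-\kappa^s \geq \kappa^s(t-s)\log\kappa$) is a bit more explicit than the paper's one-line justification, but the argument is the same.
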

\begin{proof}
	This follows from the fact that 
	\begin{align*}
		\sum_{r=1}^{t}\theta_r = \sum_{r=1}^{t} (\alpha -1) \kappa^{r-1} (1+\petito{1})= \frac{\alpha-1}{\kappa -1}\cdot  \kappa^t \cdot(1+\petito{1}) 
	\end{align*}
	and similarly for $t\geq s+1$, (the case $t=s$ can be treated aside),
	\begin{align*}
		 \sum_{r=s+1}^{t}\theta_r = \sum_{r=s+1}^{t} (\alpha -1) \kappa^{r-1} (1+\petito{1})= \frac{\alpha-1}{\kappa -1}\cdot (\kappa^t-\kappa^s)\cdot (1+\petito{1}),
	\end{align*}
the definition of $i_t$ in \eqref{eq:def_i_t beta=1}, and the fact that $\sum_{i=i_s+1}^{i_t} (e^{\theta_{t_i}} -1) \frac{w_i}{W_i} =t-s + o(t-s) = o(\kappa^t-\kappa^s)$. 
\end{proof}

\begin{proof}[Proof of Theorem~\ref{thm:quickly}\ref{it:asymptotic height beta=1}]
	Let $i_t$ and $\theta_t$ defined as in \eqref{eq:def_i_t beta=1} and \eqref{eq:def_theta_t beta=1} for some fixed $\kappa > 1$.
It follows from the definition of $i_t$ that 
\[
t_n = \frac{\log \log n}{\log \kappa} + O(1).
\]

\noindent
\textit{Upper bound.} We choose $\kappa <\alpha$ and then \eqref{eq:ass_upper_bound} follows from \eqref{eq:key_quantity beta=1}. Hence, we can apply Proposition~\ref{prop:upper_bound} to get 
$\haut(\ttT_{n})\leq t_n + O(1)$ almost surely. Letting $\kappa \to \alpha$ yields the upper bound.
\medskip 

\noindent
\textit{Lower bound.} We fix some $\kappa > \alpha$ and we aim at checking that \eqref{eq:ass_lower_bound} is satisfied to apply Proposition~\ref{prop:lower_bound} with $T(n) \coloneqq (\log \log n)^{1/2}$.
Using \eqref{eq:ass_somme_carre_beta=1}, for any $\varepsilon>0$, we have
\begin{align*}
\delta_1(n) 
%= \sum_{r \geq T(n)+1} e^{2\theta_{r}} \sum_{i=i_{r-1}+1}^{i_r} \frac{w_i^2}{W_i^2} 
%%
= \sum_{r \geq T(n)+1} e^{2\theta_{r}} \cdot O \left( \frac{1}{i_{r-1}^{2\alpha-1-\varepsilon}} \right)
= \sum_{r \geq T(n)+1} O \left( \exp( - (1-\varepsilon) \kappa^{r-1}(1+o(1)) \right),
%\grandO{\exp \left( \kappa^{T(n)-1} (2(\alpha -1)-c)(1+o(1)) \right)}.
\end{align*}
using \eqref{eq:expansion_theta_t beta=1}.
Choosing $\varepsilon = 1/4$, this yields $\delta_1(i_t) = O( \exp( - \kappa^{T(i_t)} /2))$, which is summable in $t$ since $T(i_t) \sim \sqrt{t \log \kappa}$. 
This proves the first part of \eqref{eq:ass_lower_bound}.
For the second part, we first have $E(i_t) \geq \exp(-Ct)$ by the first part of Lemma~\ref{lem:E(n)}.
Then, using \eqref{eq:ass_somme_carre_beta=1} as for $\delta_1(n)$ and \eqref{eq:partial_key beta=1}, we have, for any $\varepsilon > 0$,
\begin{align*}
\delta_2(n) 
	& = \sum_{r \geq T(n)+1} 
	\exp \left( \sum_{s=T(n)+1}^{r-1} \theta_s
	- \sum_{i=i_{T(n)}+1}^{i_{r-1}} (e^{\theta_{t_i}} -1) \frac{w_i}{W_i} \right)
	\cdot e^{2\theta_{r}} \sum_{i=i_{r-1}+1}^{i_r} \frac{w_i^2}{W_i^2} \\
	& = \sum_{r \geq T(n)+1} 
	\exp \left( \frac{\alpha-1}{\kappa -1}\cdot (\kappa^{r-1}-\kappa^{T(n)}) (1+\petito{1}) - (1-\varepsilon) \kappa^{r-1}(1+o(1)) \right) \\
	& \leq \sum_{r \geq T(n)+1} 
	\exp \left( -\left( \frac{\kappa-\alpha}{\kappa -1} - \varepsilon \right) \cdot \kappa^{r-1} (1+\petito{1}) \right).
\end{align*}
Hence, choosing $\varepsilon = \frac{1}{3} \cdot \frac{\kappa-\alpha}{\kappa -1}$, we get 
$\delta_2(n) = O(\exp (-\varepsilon \kappa^{T(n)}))$.
It follows easily that $\sum_{t\geq 1}\delta_2(i_t)/E(i_t) < \infty$, which proves the second part of \eqref{eq:ass_lower_bound}.
Applying Proposition~\ref{prop:lower_bound} and letting $\kappa \to \alpha$, we get the lower bound.

\end{proof}

%%%%%%%%%%%%%%%%%%%%%%%%%
\subsubsection{Case \texorpdfstring{$\beta>1$}{beta>1}}
%%%%%%%%%%%%%%%%%%%%%%%%%

Let $\kappa > 1$. 
For $t \geq 1$, we define 
\begin{equation} 
\ell_t \coloneqq \exp \left( \exp(\beta \kappa^t) \right)
\end{equation}
and 
\begin{equation}\label{eq:def_i_t beta>1}
i_t \coloneqq 
\begin{cases}
t+1, & \text{if } 0 \leq t < s_0, \\
\lfloor \exp(\log^{1/\beta} \ell_t) \rfloor= \lfloor \exp \left( \exp(\kappa^t) \right) \rfloor, 
& \text{if } t \geq s_0,
\end{cases}
\end{equation}
by choosing some $s_0$ large enough so that $(i_t)_{t\geq 1}$ is increasing.

\begin{lemma} \label{lem:theta_t beta>1}
	Let $\kappa > 1$. Let $i_t$ be defined as in \eqref{eq:def_i_t beta>1} and
	\begin{equation} \label{eq:def_theta_t beta>1}
	\theta_t \coloneqq
	\begin{cases}
	0, & \text{if } 1 \leq t \leq s_0, \\
	-\log(a_{i_t}-a_{i_{t-1}}), & \text{if } t > s_0.
	\end{cases}
	\end{equation}
	Then, there exists $r_0 \geq s_0$ such that $(\theta_t)_{t\geq r_0}$ is a non-decreasing sequence of non-negative numbers.
	Moreover, as $t \to \infty$,
	\begin{equation} \label{eq:expansion_theta_t beta>1}
	\theta_t 
	= (\alpha -1) \log \ell_{t-1} - \log J(\ell_{t-1}) +O(1).
%	= (\alpha -1) \exp(\beta \kappa^{t-1}) - \log L \left( \exp(\exp(\beta \kappa^{t-1})) \right) +O(1).
	\end{equation}
\end{lemma}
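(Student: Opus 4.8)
The plan is to mimic, essentially verbatim, the proofs of the cases $\beta\in(\tfrac12,1)$ and $\beta=1$ given just above: estimate $a_{i_t}-a_{i_{t-1}}$ through assumption \eqref{eq:assumption_a_n_2} and then take minus the logarithm. First I would record the simplifications available here: since $\beta>1$ we have $(4\beta-2)\wedge 0=0$, so the correction factor in \eqref{eq:assumption_a_n_2} reduces to $1+\petito{1}$, and $J$ only needs to satisfy \ref{hyp_SV_0} (slow variation).

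Next I would control the effect of the floor. From $i_t=\lfloor\exp(\exp(\kappa^t))\rfloor$ one gets $\log i_t=\exp(\kappa^t)+\grandO{\exp(-\exp(\kappa^t))}$, hence $\log^\beta i_t=\exp(\beta\kappa^t)+\petito{1}=\log\ell_t+\petito{1}$ and therefore $\exp(\log^\beta i_t)=\ell_t\cdot(1+\petito{1})$. Since $J$ is slowly varying, $J(\exp(\log^\beta i_t))=J(\ell_t)(1+\petito{1})$, so plugging into \eqref{eq:assumption_a_n_2} gives $a_\infty-a_{i_t}=\ell_t^{1-\alpha}J(\ell_t)(1+\petito{1})$, and likewise with $i_{t-1},\ell_{t-1}$ in place of $i_t,\ell_t$.

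Then I would subtract the two estimates. Because $\log(\ell_t/\ell_{t-1})=\exp(\beta\kappa^t)-\exp(\beta\kappa^{t-1})\to\infty$ and $\log J(x)=\petito{\log x}$ by Lemma~\ref{lem:slowly_varying}\ref{it:slowly_varying_0}, the term $\ell_t^{1-\alpha}J(\ell_t)$ is $\petito{\ell_{t-1}^{1-\alpha}J(\ell_{t-1})}$, so that $a_{i_t}-a_{i_{t-1}}=\ell_{t-1}^{1-\alpha}J(\ell_{t-1})(1+\petito{1})$; in particular this is positive for $t$ large, which is exactly what is needed to justify that $s_0$ can be taken so that $a_{i_t}>a_{i_{t-1}}$ and $\theta_t$ is well defined. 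Taking $-\log$ then yields $\theta_t=(\alpha-1)\log\ell_{t-1}-\log J(\ell_{t-1})+\petito{1}$, which is \eqref{eq:expansion_theta_t beta>1} (the $\petito{1}$ being absorbed into the $\grandO{1}$). For the remaining assertions: the expansion shows $\theta_t\to\infty$, since $(\alpha-1)\log\ell_{t-1}$ dominates $\log J(\ell_{t-1})=\petito{\log\ell_{t-1}}$, hence $\theta_t\geq 0$ eventually; and $\theta_{t+1}-\theta_t=(\alpha-1)(\log\ell_t-\log\ell_{t-1})-(\log J(\ell_t)-\log J(\ell_{t-1}))+\petito{1}\to+\infty$, using that $\log J(\ell_t)-\log J(\ell_{t-1})=\petito{\log\ell_t-\log\ell_{t-1}}$ (again Lemma~\ref{lem:slowly_varying}\ref{it:slowly_varying_0}) and $\log\ell_t-\log\ell_{t-1}\to\infty$; so $(\theta_t)$ is eventually non-decreasing.

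I do not expect a serious obstacle here — this is the ``easiest'' of the three cases, since $\ell_{t-1}=\petito{\ell_t}$ so no delicate cancellation has to be tracked (unlike for $\beta<\tfrac12$). The only points requiring a little care are propagating the floor error through the triple exponential so that $\log^\beta i_t-\log\ell_t=\petito{1}$, and the two slow-variation arguments: evaluating $J$ at arguments whose ratio tends to $1$, and checking that the increment $\log J(\ell_t)-\log J(\ell_{t-1})$ is of smaller order than $\log\ell_t-\log\ell_{t-1}$.
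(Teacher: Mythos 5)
Your proof is correct and follows essentially the same route as the paper's, which simply defers to the $\beta=1$ (and hence $\beta\in(\tfrac12,1)$) proofs: estimate $a_{i_t}-a_{i_{t-1}}$ from \eqref{eq:assumption_a_n_2}, show the $\ell_{t-1}$-term dominates, take $-\log$, and read off monotonicity from the resulting doubly-exponential growth. One remark worth making explicit: the way you propagate the floor error is not just cosmetic — the paper's bound $\lvert\exp(\log^\beta i_t)-\ell_t\rvert\le 1$, justified in the $\beta\in(0,1)$ case by the derivative of $x\mapsto\exp(\log^\beta x)$ being at most $1$, does not hold for $\beta>1$ (that derivative then blows up); your alternative argument $\log^\beta i_t-\log\ell_t=\petito{1}$ followed by the uniform convergence theorem for the slowly varying $J$ is the right way to get $a_\infty-a_{i_t}=\ell_t^{1-\alpha}J(\ell_t)(1+\petito{1})$ in this regime, and it supplies the detail the paper leaves implicit.
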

\begin{proof}
	Proceeding as in the proof of Lemma~\ref{lem:theta_t beta=1}, we get $a_{i_t}-a_{i_{t-1}} = \ell_{t-1}^{1-\alpha} \cdot J(\ell_{t-1}) \cdot (1 + o(1))$. The result follows by taking the logarithm.
	In particular, $\theta_t \sim (\alpha -1) \exp(\beta \kappa^{t-1})$, so it is eventually non-negative and non-decreasing.
\end{proof}
%%
%%%%%%%%%%%%%%%%%%
%%
%\begin{lemma} \label{lem:estimation_key_quantity beta>1}
%	Let $\kappa > 1$ such that $\kappa \neq \alpha$.
%	Let $i_t$ be defined as in \eqref{eq:def_i_t beta=1} and $\theta_t$ as in \eqref{eq:def_theta_t beta=1}. 
%	Then, as $t \to \infty$,
%	\begin{equation} \label{eq:key_quantity beta>1}
%	i_t \cdot \exp \left( \sum_{i=2}^{i_t} (e^{\theta_{t_i}} -1) \frac{w_i}{W_i} - \sum_{r=1}^t \theta_r \right) 
%	= \exp \left( e^{\kappa^t} (1+o(1)) - (\alpha -1) e^{\beta \kappa^{t-1}} (1+o(1)) \right).
%	\end{equation}
%	Moreover, as $s \to \infty$, uniformly in $t \geq s+1$,
%	\begin{equation} \label{eq:partial_key beta>1}
%	\frac{i_t}{i_s} \cdot \exp \left( \sum_{i=i_s+1}^{i_t} (e^{\theta_{t_i}} -1) \frac{w_i}{W_i} - \sum_{r=s+1}^t \theta_r \right) 
%	= \exp \left(  e^{\kappa^t} (1+o(1)) - (\alpha -1) e^{\beta \kappa^{t-1}} (1+o(1)) \right).
%	\end{equation}
%\end{lemma}
%%
\begin{lemma} \label{lem:estimation_key_quantity beta>1}
	Let $\kappa > 1$.
	Let $i_t$ be defined as in \eqref{eq:def_i_t beta>1} and $\theta_t$ as in \eqref{eq:def_theta_t beta>1}. 
	Then, as $t \to \infty$,
	\begin{equation} \label{eq:key_quantity beta>1}
	i_t \cdot \exp \left( \sum_{i=2}^{i_t} (e^{\theta_{t_i}} -1) \frac{w_i}{W_i} - \sum_{r=1}^t \theta_r \right) 
	= \exp \left(  e^{\kappa^t} - (\alpha -1) e^{\beta \kappa^{t-1}} (1+o(1)) \right).
	\end{equation}
	Moreover, as $s \to \infty$, uniformly in $t \geq s$,
	\begin{equation} \label{eq:partial_key beta>1}
	\exp \left( \sum_{r=s+1}^t \theta_r - \sum_{i=i_s+1}^{i_t} (e^{\theta_{t_i}} -1) \frac{w_i}{W_i} \right) 
	= \exp \left( (\alpha -1) e^{\beta \kappa^{t-1}} (1+o(1)) \1_{t \geq s+1} \right).
	\end{equation}
\end{lemma}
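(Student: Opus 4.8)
The plan is to mirror the proof of Lemma~\ref{lem:estimation_key_quantity beta=1}: both displays reduce to estimating three elementary ingredients — $\log i_t$, the sum $\sum_i (e^{\theta_{t_i}}-1)\frac{w_i}{W_i}$, and the partial sums of $(\theta_r)$ — and then combining them.

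First I would record that, by \eqref{eq:def_i_t beta>1}, $\log i_t = \log \lfloor \exp(\exp(\kappa^t)) \rfloor = e^{\kappa^t} + \petito{1}$ as $t \to \infty$ (the flooring contributes only a super-exponentially small error). Next, since $\theta_r = -\log(a_{i_r} - a_{i_{r-1}})$ for $r > s_0$ by \eqref{eq:def_theta_t beta>1}, we have $e^{\theta_r}(a_{i_r} - a_{i_{r-1}}) = 1$, and using $\sum_{i=i_{r-1}+1}^{i_r} \frac{w_i}{W_i} = a_{i_r} - a_{i_{r-1}}$ from \eqref{eq:def an} together with the fact that the terms with $t_i \le s_0$ vanish, a telescoping gives, for $t \geq s$ large,
\[
\sum_{i=i_s+1}^{i_t} (e^{\theta_{t_i}}-1)\frac{w_i}{W_i}
= \sum_{r=s+1}^t (e^{\theta_r}-1)(a_{i_r}-a_{i_{r-1}})
= (t-s) - (a_{i_t}-a_{i_s})
= t - s + \petito{1},
\]
uniformly in $t \geq s$ as $s \to \infty$, because $a_n$ converges; in particular $\sum_{i=2}^{i_t}(e^{\theta_{t_i}}-1)\frac{w_i}{W_i} = t + O(1)$.

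Then I would estimate the partial sums of $\theta$. Combining \eqref{eq:expansion_theta_t beta>1} with $\log \ell_{t-1} = e^{\beta\kappa^{t-1}}$ and the slow variation of $J$ (so that $\log J(\ell_{t-1}) = \petito{\log \ell_{t-1}}$, by Lemma~\ref{lem:slowly_varying}\ref{it:slowly_varying_0}), one gets $\theta_t = (\alpha-1)e^{\beta\kappa^{t-1}}(1+\petito{1})$. Since $\beta,\kappa > 1$ the increments grow doubly exponentially, so the ratios $\theta_{r}/\theta_{t}$ for $r < t$ decay super-exponentially in $t$ and the sum is governed by its last term:
\[
\sum_{r=1}^t \theta_r = (\alpha-1)e^{\beta\kappa^{t-1}}(1+\petito{1}),
\qquad
\sum_{r=s+1}^t \theta_r = (\alpha-1)e^{\beta\kappa^{t-1}}(1+\petito{1})\cdot\1_{t \geq s+1},
\]
the second identity holding uniformly in $t \geq s$ as $s \to \infty$ (the factor $\1_{t\geq s+1}$ covering the trivial case $t = s$, for which the sum is empty). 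Putting these together, $\log$ of the left-hand side of \eqref{eq:key_quantity beta>1} equals $e^{\kappa^t} + \petito{1} + t + O(1) - (\alpha-1)e^{\beta\kappa^{t-1}}(1+\petito{1})$, and since $t + O(1) = \petito{e^{\beta\kappa^{t-1}}}$ this is $e^{\kappa^t} - (\alpha-1)e^{\beta\kappa^{t-1}}(1+\petito{1})$, proving \eqref{eq:key_quantity beta>1}; the identity \eqref{eq:partial_key beta>1} follows in the same way, now without the $\log i_t$ term and with $t - s = \petito{e^{\beta\kappa^{t-1}}}$ absorbed into the error.

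I do not expect a genuine obstacle here: the only point requiring care is the uniformity of the error terms in the ``moreover'' parts, which is precisely why one keeps the telescoped exact expression $(t-s) - (a_{i_t}-a_{i_s})$ for the weight sum and uses the explicit super-exponential decay of $\theta_{t-1}/\theta_t$ to control the partial sums of $(\theta_r)$ uniformly.
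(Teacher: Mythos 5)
Your proof is correct and follows essentially the same route as the paper's (much more terse) argument: you control $\log i_t$, observe that $\sum_{i=i_s+1}^{i_t}(e^{\theta_{t_i}}-1)\frac{w_i}{W_i}$ is at most of order $t$ via the telescoping identity (the paper simply records it as $O(t)$), and sum the equivalent $\theta_r \sim (\alpha-1)e^{\beta\kappa^{r-1}}$, noting that the double-exponential growth makes the last term dominate. The only difference is that you spell out the details (exact telescoping, uniformity of the error) that the paper leaves implicit.
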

\begin{proof}
	Recall that, as for the other cases, we have $\sum_{i=i_s+1}^{i_t} (e^{\theta_{t_i}} -1) \frac{w_i}{W_i} = O(t)$ for any $s \leq t$.
	Then, the two claims are obtained by summing the asymptotic equivalent $\theta_r \sim (\alpha -1) \exp(\beta \kappa^{r-1})$.
\end{proof}
%%
%%%%%%%%%%%%%%%%%%
%%
\begin{proof}[Proof of Theorem~\ref{thm:quickly}\ref{it:asymptotic height beta>1}]
	Let $i_t$ and $\theta_t$ defined as in \eqref{eq:def_i_t beta>1} and \eqref{eq:def_theta_t beta>1} for some fixed $\kappa > 1$.
	It follows from the definition of $i_t$ that 
	\[
	t_n = \frac{\log \log \log n}{\log \kappa} + O(1).
	\]
	
	\noindent
	\textit{Upper bound:} We choose $\kappa < \beta$ so that $e^{\kappa^t} = o( (\alpha -1) e^{\beta \kappa^{t-1}})$. Therefore, \eqref{eq:ass_upper_bound} follows from \eqref{eq:partial_key beta>1} with $s=1$. 
	Applying Proposition~\ref{prop:upper_bound} and then letting $\kappa \to \beta$ yields the upper bound.
	\medskip 
	
	\noindent
	\textit{Lower bound:} We fix some $\kappa > \beta$ and we aim at checking that \eqref{eq:ass_lower_bound} is satisfied to apply Proposition~\ref{prop:lower_bound} with $T(n) \coloneqq \log \log \log \log n$.
	Using assumption \eqref{eq:ass_somme_carre_beta>1}, there is $\varepsilon>0$ such that, as $r \to \infty$,
	\begin{equation} \label{eq:control_square}
		\sum_{i=i_{r-1}+1}^{i_r} \frac{w_i^2}{W_i^2} 
		= \grandO{\frac{1}{i_{r-1}^\varepsilon} 
			\ell_{r-1}^{-2(\alpha-1)} J^2\left( \ell_{r-1} \right)}
		= \grandO{ \frac{1}{i_{r-1}^\varepsilon} e^{-2\theta_r} }, 
	\end{equation}
	using \eqref{eq:expansion_theta_t beta>1}.
	Applying this, we get
	\begin{align*}
	\delta_1(n) 
	= \sum_{r \geq T(n)+1} \grandO{ \frac{1}{i_{r-1}^\varepsilon}}
	= \grandO{ \sum_{r \geq T(n)+1} \exp \left( -\varepsilon e^{\kappa^r} \right) } 
	= \grandO{ \exp \left( -\varepsilon e^{\kappa^{T(n)}} \right) }.
	\end{align*}
	Since $T(i_t) \sim \log t$, it follows that $\sum_{t\geq 1}\delta_1(i_t)<\infty$, which proves the first part of \eqref{eq:ass_lower_bound}.
	We now aim at checking the second part of \eqref{eq:ass_lower_bound}.
	It follows from the first part of Lemma~\ref{lem:E(n)} that $E(i_t) \geq \exp(-Ct)$.
	For $\delta_2(n)$, using \eqref{eq:partial_key beta>1} and \eqref{eq:control_square}, we get
	\begin{align*}
	\delta_2(n) 
	& = \sum_{r \geq T(n)+1} e^{2\theta_{r}}
	\cdot \exp \left( \sum_{s=T(n)+1}^{r-1} \theta_s
	- \sum_{i=i_{T(n)}+1}^{i_{r-1}} (e^{\theta_{t_i}} -1) \frac{w_i}{W_i} \right)
	\cdot \sum_{i=i_{r-1}+1}^{i_r} \frac{w_i^2}{W_i^2} \\
	& = \sum_{r \geq T(n)+1} \exp \left( (\alpha -1) e^{\beta \kappa^{r-2}} (1+o(1)) \1_{t \geq s+1} \right) \cdot \grandO{ \frac{1}{i_{r-1}^\varepsilon}} \\
	& = \sum_{r \geq T(n)+1} \exp \left( -\varepsilon e^{\kappa^{r-1}} (1+o(1)) \right),
	\end{align*}
	using that $e^{\beta \kappa^{r-2}} =o(e^{\kappa^{r-1}})$, since $\kappa > \beta$.
	It is then easy to check that $\sum_{t\geq 1}\delta_2(i_t)/E(i_t) < \infty$, proving the second part of \eqref{eq:ass_lower_bound}.
	Applying Proposition~\ref{prop:lower_bound} and letting $\kappa \to \beta$, we get the lower bound.
\end{proof}

%%%%%%%%%%%%%%%%%%%%%%%%%%%%%%%%%%%%%%%%%%%%%%%%%%%%%%%%%%%%%%%%%%%%%%%%%%
\appendix
%%%%%%%%%%%%%%%%%%%%%%%%%%%%%%%%%%%%%%%%%%%%%%%%%%%%%%%%%%%%%%%%%%%%%%%%%%

%%%%%%%%%%%%%%%%%%%%%%%%%%%%%%%%%%%%%%%%%%%%%%%%%%%%%%%%%%%%%%%%%%%%
\section{Crude bounds for the height}\label{sec:crude_bounds}
%%%%%%%%%%%%%%%%%%%%%%%%%%%%%%%%%%%%%%%%%%%%%%%%%%%%%%%%%%%%%%%%%%%%

%%%%%%%%%%%%%%%%%%%%%%%%%%%%%%%%%%%%%%%%%%%%%%%%%%%%
\subsection{Crude upper bound}\label{sec:crude upper bound}
%%%%%%%%%%%%%%%%%%%%%%%%%%%%%%%%%%%%%%%%%%%%%%%%%%%%

In this section, we use the many-to-one lemma in a naive way (without a barrier) in order to get some crude upper bound on the height of $\ttT_n$.
As explained below, this amounts to bounding $\haut(\ttT_n)$ by the maximum of $n$ independent copies of $\sum_{i=1}^{n} B_i$, where the $B_i$'s are independent $\mathrm{Bernoulli}(w_i/W_i)$ random variables.
We then compare the obtained upper bound with the actual behavior of $\haut(\ttT_n)$ in the regimes that are studied in Theorem~\ref{thm:powers_of_log} and Theorem~\ref{thm:quickly}.

%%%%%%%%%%%%%%%%%%%%%%%%%%%%%%%%%%%%%%%%%%%%%%%%%%%%
%\subsection{A crude upper bound using the Chernoff bound}
%%%%%%%%%%%%%%%%%%%%%%%%%%%%%%%%%%%%%%%%%%%%%%%%%%%%

\paragraph{Crude upper bound approach.}
We first work under no restriction on the weight sequence $(w_i)$. 
Applying the result contained in Remark~\ref{rem:consequence_many-to-one} to the function $f(y)=\ind{y\geq x}$, we get 
\[
	\Pp{\haut(\ttu_n)\geq x}= \Pp{H_{n-1}+1\geq x},
\]
where we recall that we defined for $k \geq 1$, $H_k=\sum_{i=2}^k B_i$ with $(B_i)_{i\geq 1}$ a sequence of independent random variables such that $B_i$ has distribution $\mathrm{Bernoulli}(w_i/W_i)$.
Using a union-bound on all the vertices yields
\begin{equation} \label{eq:comp_iid}
	\Pp{\haut(\ttT_n)\geq x} \leq \sum_{i=1}^{n}\Pp{\haut(\ttu_i)\geq x}=\sum_{i=1}^{n}\Pp{H_{i-1}+1\geq x} \leq n \cdot \Pp{H_{n}+1\geq x},
\end{equation}
where for the last inequality we used that $(H_k)_{k\geq 1}$ is non-decreasing.
Note that the right-hand side of \eqref{eq:comp_iid} is small if and only if $x$ is larger than the maximum of $n$ independent copies of $H_{n}+1 = \sum_{i=1}^{n} B_i$.
Even if the inequalities in \eqref{eq:comp_iid} are not optimal, there are not so far from the truth in the regimes considered in this paper (see Footnote \ref{footnote:crude}).

We now have to estimate the maximum of $n$ independent copies of $\sum_{i=1}^{n-1} B_i$, that we denote by~$M_n$, in the different regimes considered in Theorem \ref{thm:powers_of_log} and Theorem \ref{thm:quickly}.
This is done via standard techniques and we omit the calculations, even if they can be tedious in some of the cases.

\paragraph{Variance varying like a power of $\log n$.} 
We consider here the framework of Theorem \ref{thm:powers_of_log}.
First assume that $\alpha \in (0,1)$.
In that case one can check that
\begin{align*}
M_n 
& = \frac{\log n}{\alpha \log \log n} 
+ \frac{\log n}{(\alpha \log \log n)^2} 
\biggl( 
\sum_{k\geq 0} \left( \frac{\log L(\log n)}{\alpha \log \log n} \right)^k
(\log L(\log n) + (k+1) \log \log \log n)  \\
& \hspace{10.5cm} {} + 1 + \log \alpha - \log(1-\alpha) + o(1) \biggr).
%& \hspace{11cm} {} + \log \left( \frac{e \alpha}{1-\alpha} \right) + o(1) \biggr).
\end{align*}
The single difference with the expansion of $\haut(\ttT_n)$ in Theorem \ref{thm:powers_of_log} appears in the coefficient of the term of order $\log n / (\log \log n)^2$, which is $1 + \log \alpha - \log(1-\alpha)$ instead of $1 + \log \alpha - \alpha$.

Now assume that $\alpha = 1$ and $a_n$ diverges. 
Assume also that $a_{\log n} = o(a_n)$ as $n \to \infty$%
\footnote{We do not discuss the opposite case here, for which it is tedious to get an expansion for $M_n$. However, it is easier to check that the claim in Remark \ref{rem:comparison_iid} still holds.\label{footnote:cas_chiant}}.
Let $J(x) \coloneqq \int_1^x \frac{L(u)}{u} \diff u$, so that $a_n \sim J (\log n)$.
One can check that $L(x) = o(J(x))$ as $x \to \infty$ and it follows that $J$ satisfies \hyperref[hyp_SV_k]{$(\mathrm{SV}_1)$}.
Then, the following expansion holds.
\begin{align*}
M_n & = \frac{\log n}{\log \log n} 
+ \frac{\log n}{(\log \log n)^2} 
\biggl( 
\sum_{k\geq 0} \left( \frac{\log J(\log n)}{\log \log n} \right)^k
(\log J(\log n) + (k+1) \log \log \log n) + 1 + o(1) \biggr).
\end{align*}
Therefore, the first order is the same for $M_n$ and $\haut(\ttT_n)$, but the order of magnitude of the difference is greater than $\log n/(\log \log n)^2$ because $\log J(x) - \log L(x) \to \infty$. 

Assume again $\alpha = 1$, but now with $a_n$ converging to some finite limit $a_\infty$.
Assume also that $a_\infty - a_n = o(a_\infty - a_{\log n})$ (the same comment as in Footnote \ref{footnote:cas_chiant} holds).
Let $I(x) \coloneqq \int_x^\infty \frac{L(u)}{u} \diff u$, one can check as for $J$ that $L(x) = o(I(x))$ as $x \to \infty$ and therefore $I$ satisfies \hyperref[hyp_SV_k]{$(\mathrm{SV}_1)$}. 
In this case, we get
\begin{align*}
M_n & = \frac{\log n}{\log \log n} 
+ \frac{\log n}{(\log \log n)^2} 
\biggl( 
\log I(\log \log n) + \log \log \log n + 1 + o(1) \biggr).
\end{align*}
Again, the difference with $\haut(\ttT_n)$ is not at the first order but is greater than $\log n/(\log \log n)^2$ because $\log I(\log x) \geq \log I(x)$ and $\log I(x) - \log L(x) \to \infty$.

Finally, in the case $\alpha > 1$, we can check that
\begin{equation} \label{eq:first_order_M_n}
M_n \sim \frac{\log n}{\log \log n},
\end{equation}
which differs from the first order of $\haut(\ttT_n)$ by a multiplicative constant.

\paragraph{Quickly converging variance.} We consider here the framework of Theorem \ref{thm:powers_of_log}.
If $\beta >1$, we get the same behavior as in \eqref{eq:first_order_M_n}.
If $\beta =1$, we have
\[
	M_n \sim \frac{\log n}{\alpha \log \log n},
\]
where $\alpha > 1$ is the constant appearing in assumption \eqref{eq:assumption_a_n_2}.
If $\beta > 1$, we get
\[
M_n \sim \frac{\log n}{(\alpha-1) (\log \log n)^\beta}.
\]
In all these cases, the upper bound is not even the right order of magnitude for $\haut(\ttT_n)$ since we have $\haut(\ttT_n)=o(M_n)$ almost surely.

%%%%%%%%%%%%%%%%%%%%%%%%%%%%%%%%%%%%%%%%%%%%%%%%%%%%
\subsection{Crude lower bound}\label{sec:crude lower bound}
%%%%%%%%%%%%%%%%%%%%%%%%%%%%%%%%%%%%%%%%%%%%%%%%%%%%

In this section, we present a simple method for proving lower bounds for the height of $\ttT_n$, which surprisingly gives a correct first order in the regimes studied in this paper.
Assume for simplicity that the sequence $(w_i)_{i\geq 1}$ is non-increasing and that
all weights $w_i$ are non-zero.

The strategy is the following. We build a path in the tree $\ttT_n$ starting from the root, by then considering its first child and then the first child of the latter, and so on.
More formally, we define a random sequence of indices $(I_r)_{r\geq 1}$ recursively as follows: let $I_0 = 1$ and, for any $r \geq 0$, $I_{r+1}$ is the index of the first vertex attached to $\ttu_{I_r}$. 
Note that this sequence is a.s.\@ well-defined: any vertex $\ttu_i$ has infinitely many children a.s.\@ by Borel--Cantelli lemma and the fact that $\sum_{k>i} w_i/W_k = \infty$ in the regime where $W_k$ grows sub-polynomially.
Finding a lower bound for the length of this path provides a lower bound for $\haut(\ttT_n)$.

To this end, we note that, for any $r \geq 1$ and $t \geq 0$,
\[
\Ppsq{I_{r+1} - I_r > t}{I_r} 
%= \Ppsq{\forall i \in \llbracket I_r+1, I_r+t \rrbracket, u_i \text{ is not attached to } u_{I_r}}{I_r} 
= \prod_{i=I_r+1}^{I_r+t} \left( 1 - \frac{w_{I_r}}{W_i} \right) \\
\leq \exp \left( - \sum_{i=I_r+1}^{I_r+t} \frac{w_{I_r}}{W_i} \right).
\]
Then, for any increasing sequence $(i_r)_{r\geq 0}$ such that $i_0 = 1$, we have, noting that $I_0 = 1 = i_0$,
\begin{align}
\Pp{\exists r \geq 0 : I_r > i_r}
%& = \sum_{r \geq 0} \Pp{ I_{r+1} > i_{r+1}, \forall s \in \llbracket 1,r \rrbracket, I_s \leq i_s} 
& \leq \sum_{r \geq 0} \Pp{ I_{r+1} > i_{r+1}, I_r \leq i_r} 
= \sum_{r \geq 0} \Ec{\Ppsq{I_{r+1}-I_r > i_{r+1}-I_r}{I_r} \1_{I_r \leq i_r}} 
\nonumber \\
& \leq \sum_{r \geq 0} \Ec{\exp \left( - \sum_{i=I_r+1}^{i_{r+1}} \frac{w_{I_r}}{W_i} \right) \1_{I_r \leq i_r}} 
\leq \sum_{r \geq 0} \exp \left( - \sum_{i=i_r+1}^{i_{r+1}} \frac{w_{i_r}}{W_i} \right),
\label{eq:criterion_crude_lower_bound}
\end{align}
using that $(w_i)_{i\geq 1}$ is non-increasing.
Hence, if we find a sequence $(i_r)_{r\geq 0}$ such that the right-hand side of \eqref{eq:criterion_crude_lower_bound} is smaller than $\varepsilon$, then we can deduce that $\P(\haut(\ttT_n) \geq t_n-1) \geq \Pp{I_{t_n-1} \leq n} \geq 1-\varepsilon$, where $(t_n)_{n\geq 1}$ is defined in such a way that $i_{t_n-1} < n \leq i_{t_n}$ for all $n\geq 1$.

We now illustrate this method on with example: assume $w_i = i^{-\alpha}$ for some $\alpha > 1$ (this corresponds to the case $\beta = 1$ in Theorem~\ref{thm:quickly}).
Then, using the bound $W_i \geq W_1 = 1$, we have
\[
	\sum_{i=i_r+1}^{i_{r+1}} \frac{w_{i_r}}{W_i}
	\geq i_r^{-\alpha} \cdot (i_{r+1}-i_r).
\]
In order for this to be large, we want $i_r^\alpha = o(i_{r+1})$. This leads naturally to the choice $i_r = \exp(\kappa^r)$ with $\kappa > \alpha$ for which the series on the right-hand side of \eqref{eq:criterion_crude_lower_bound} is convergent.
But, to get it as small as we want, we rather take $i_r = \delta \cdot \exp(\kappa^r)$ for small enough $\delta > 0$.
This proves a lower bound with $t_n = (\log \log n)/\log \kappa + O(1)$, which yields the lower bound in Theorem \ref{thm:quickly} by letting $\kappa \to \alpha$  (at least in probability, but one can strengthen it to an almost sure result).

This method, despite being very crude, gives the correct first order for the lower bound in all cases of Theorem \ref{thm:powers_of_log} and Theorem \ref{thm:quickly}, at least in the case of non-increasing $(w_i)_{i\geq 1}$. 
One can weaken this assumption, but then it does not work to choose the first child of each vertex along the path, one has to chose the first child with sufficient weight.
Pushing this method to get the next orders in the framework of Theorem \ref{thm:powers_of_log}, one sees that the lower bound obtained does not match the truth at the term of order $\frac{\log n}{\log \log n} \log \log \log n$.
Moreover, in the case of polynomially-growing $W_n$ treated in \cite{painsenizergues2022}, this method would not give the correct first order. 
%Actually, when $W_n \sim n^\gamma$ with $\gamma > 1$, for each vertex, there is a positive probability to have no children.

%%%%%%%%%%%%%%%%%%%%%%%%%%%%%%%%%%%%%%%%%%%%%%%%%%%%
\section{Slowly varying functions}
%%%%%%%%%%%%%%%%%%%%%%%%%%%%%%%%%%%%%%%%%%%%%%%%%%%%
\label{sec:slowly varying}

In this short section, we state and prove a few results concerning slowly varying functions that we need in the next section in order to prove Theorem~\ref{thm:powers_of_log} and Theorem~\ref{thm:quickly}.
\begin{lemma} \label{lem:slowly_varying}
	The following holds.
	\begin{enumerate}
		\item\label{it:slowly_varying_0} Assume $L \colon [1,\infty) \to (0,\infty)$ satisfies \ref{hyp_SV_0}. Then, as $x \to \infty$, $\log L(x) = o(\log x)$ and, for any $\lambda >1$, uniformly in $y > \lambda x$,
		\begin{equation} \label{eq:difference_log_L_SV0}
		\log L(y) - \log L(x) = \petito{\log \left( \frac{y}{x} \right)}.
		\end{equation}
		\item\label{it:slowly_varying_1} Assume $L \colon [1,\infty) \to (0,\infty)$ satisfies \hyperref[hyp_SV_k]{$(\mathrm{SV}_1)$}. Then, as $x \to \infty$, uniformly in $y > x$,
		\begin{equation} \label{eq:difference_log_L}
		\log L(y) - \log L(x) = \petito{\log \left( \frac{y}{x} \right)}.
		\end{equation}
		\item\label{it:slowly_varying_2} Assume $L \colon [1,\infty) \to (0,\infty)$ satisfies \hyperref[hyp_SV_k]{$(\mathrm{SV}_2)$}.
		Let $\widetilde{L}(x) \coloneqq \exp(xL'(x)/L(x))$.
		Then, $\widetilde{L}$ satisfies \hyperref[hyp_SV_k]{$(\mathrm{SV}_1)$} and, uniformly in $y > x$,
		\begin{equation} \label{eq:difference_log_L_precis}
		\log L(y) - \log L(x) 
		= \log \widetilde{L}(y) \cdot \log \left( \frac{y}{x} \right) 
		+ \petito{\log^2 \left( \frac{y}{x} \right)},
		\end{equation}
	\end{enumerate}
\end{lemma}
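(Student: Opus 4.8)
The plan is to prove the three items of Lemma~\ref{lem:slowly_varying} in order, since each one feeds into the next. For part~\ref{it:slowly_varying_0}, I would invoke the standard representation theorem for slowly varying functions (Karamata): under \ref{hyp_SV_0} one can write $L(x) = c(x) \exp\left( \int_1^x \varepsilon(u)/u \, \diff u \right)$ with $c(x) \to c \in (0,\infty)$ and $\varepsilon(u) \to 0$. From this, $\log L(x) = \log c(x) + \int_1^x \varepsilon(u)/u \, \diff u$, and since $\varepsilon(u) \to 0$ the integral is $o(\log x)$, giving $\log L(x) = o(\log x)$. For \eqref{eq:difference_log_L_SV0}, for $y > \lambda x$ write $\log L(y) - \log L(x) = \log c(y) - \log c(x) + \int_x^y \varepsilon(u)/u \, \diff u$; bound $|\int_x^y \varepsilon(u)/u \, \diff u| \leq \sup_{u \geq x} |\varepsilon(u)| \cdot \log(y/x)$, and absorb the $\log c(y) - \log c(x)$ term into $o(\log(y/x))$ using that $\log(y/x) \geq \log \lambda > 0$ is bounded below while $\log c(y) - \log c(x) \to 0$.

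For part~\ref{it:slowly_varying_1}, the point is that \hyperref[hyp_SV_k]{$(\mathrm{SV}_1)$} gives a cleaner representation with no oscillating prefactor: if $\varepsilon(x) \coloneqq x L'(x)/L(x) \to 0$, then $(\log L)'(x) = \varepsilon(x)/x$, so $\log L(y) - \log L(x) = \int_x^y \varepsilon(u)/u \, \diff u$ exactly (no $c(\cdot)$ term). Then for any $y > x$, $|\log L(y) - \log L(x)| \leq \sup_{u \geq x} |\varepsilon(u)| \cdot \log(y/x) = o(\log(y/x))$ uniformly in $y > x$ as $x \to \infty$. This removes the $\lambda > 1$ restriction that was needed in part~\ref{it:slowly_varying_0}.

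For part~\ref{it:slowly_varying_2}, write $\varepsilon(x) = x L'(x)/L(x)$ so that $\widetilde{L}(x) = e^{\varepsilon(x)}$ and $\log \widetilde{L}(x) = \varepsilon(x) \to 0$. First I would check $\widetilde{L}$ satisfies \hyperref[hyp_SV_k]{$(\mathrm{SV}_1)$}: compute $x \widetilde{L}'(x)/\widetilde{L}(x) = x \varepsilon'(x)$, and from \hyperref[hyp_SV_k]{$(\mathrm{SV}_2)$} with $i=1,2$, namely $x\varepsilon(x) \cdot$ (stuff) $\to 0$; more precisely $\varepsilon'(x) = \frac{\diff}{\diff x}\left( x L'/L \right) = L'/L + x L''/L - x (L'/L)^2 = \frac{1}{x}\left( x L'/L + x^2 L''/L - (xL'/L)^2 \right)$, and each of the three terms in parentheses tends to $0$ by \hyperref[hyp_SV_k]{$(\mathrm{SV}_2)$}, so $x \varepsilon'(x) \to 0$, which is exactly \hyperref[hyp_SV_k]{$(\mathrm{SV}_1)$} for $\widetilde L$. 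For the expansion \eqref{eq:difference_log_L_precis}: starting from $\log L(y) - \log L(x) = \int_x^y \varepsilon(u)/u \, \diff u$, substitute $u = x e^s$ so the integral becomes $\int_0^{\log(y/x)} \varepsilon(x e^s) \, \diff s$. Now $\varepsilon(x e^s) = \varepsilon(y) + (\varepsilon(xe^s) - \varepsilon(y))$; integrating the constant $\varepsilon(y) = \log \widetilde L(y)$ over $[0, \log(y/x)]$ gives the main term $\log \widetilde{L}(y) \cdot \log(y/x)$. For the remainder, $|\varepsilon(xe^s) - \varepsilon(y)| = |\int_{xe^s}^{y} \varepsilon'(u)\, \diff u| \leq \sup_{u \geq x}|u\varepsilon'(u)| \cdot \log(y/(xe^s))$, and since $x\varepsilon'(x) \to 0$ this sup is $o(1)$ uniformly; integrating $\log(y/(xe^s))$ over $s \in [0,\log(y/x)]$ yields something of order $\log^2(y/x)$, giving the $o(\log^2(y/x))$ error uniformly in $y > x$.

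The main obstacle, I expect, is not any single estimate but getting the uniformity statements exactly right: in parts~\ref{it:slowly_varying_1} and~\ref{it:slowly_varying_2} the claim is "as $x \to \infty$, uniformly in $y > x$", which forces every $o(\cdot)$ above to come from a quantity like $\sup_{u \geq x} |\varepsilon(u)|$ or $\sup_{u \geq x}|u\varepsilon'(u)|$ that depends only on $x$ — one must be careful never to let the error depend on $y$ in an uncontrolled way, and in particular the substitution $u = xe^s$ is what makes this transparent. A secondary point worth stating carefully is the derivative computation for $\varepsilon'$ in part~\ref{it:slowly_varying_2}, where one needs all of \hyperref[hyp_SV_k]{$(\mathrm{SV}_2)$} (both $i=1$ and $i=2$) to conclude $x\varepsilon'(x) \to 0$; I would also remark that \hyperref[hyp_SV_k]{$(\mathrm{SV}_1)$} $\implies$ \ref{hyp_SV_0} follows from part~\ref{it:slowly_varying_1} (take $\lambda x$ for $y$ to see $L(\lambda x)/L(x) \to 1$), which the paper already announced and which is a free byproduct.
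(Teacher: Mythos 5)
Your proof is correct and follows essentially the same route as the paper's: Karamata's representation theorem for part (i), the exact identity $\log L(y) - \log L(x) = \int_x^y \frac{uL'(u)}{L(u)}\frac{\diff u}{u}$ for part (ii), and for part (iii) the decomposition $\int_x^y \frac{1}{u}\log\widetilde{L}(u)\,\diff u = \log\widetilde{L}(y)\log(y/x) + \int_x^y \frac{1}{u}\bigl(\log\widetilde{L}(u)-\log\widetilde{L}(y)\bigr)\diff u$ with the remainder controlled by applying part (ii) to $\widetilde{L}$. The only cosmetic difference is that you make the change of variables $u=xe^s$ explicit and spell out the derivative computation showing $x\varepsilon'(x)\to 0$, which the paper leaves as "a derivative calculation."
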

\begin{proof}
	\textit{Part \ref{it:slowly_varying_0}.} This follows easily from Karamata's Representation Theorem \cite[Theorem 1.3.1]{binghamgoldieteugels1989}, which shows that, for some $A$ large enough, $L$ can be written as
	\[
	L(x) = \exp \left( f(x) + \int_A^x \frac{g(u)}{u} \diff u \right), 
	\qquad x \geq A,
	\]
	where $f$ and $g$ are measurable functions such that $f(x)$ has a finite limit as $x \to \infty$ and $g(x) \to 0$.

	\noindent 
	\textit{Part \ref{it:slowly_varying_1}.} 
	When $L$ satisfies \hyperref[hyp_SV_k]{$(\mathrm{SV}_1)$}, the representation above holds with $f = 0$ and $g(x) = xL'(x)/L(x)$.
	%	We write
	%	\begin{equation} \label{eq:assumption_L_1_consequence}
	%	L(x) = L(1) \exp \left( \int_1^x \frac{L'(u)}{L(u)} \diff u \right)
	%	= L(1) \exp \left( \int_1^x \frac{1}{u} \cdot \frac{u L'(u)}{L(u)} \diff u \right),
	%	\end{equation}
	%	which, together with the fact that $\frac{xL'(x)}{L(x)} \to 0$, yields $\log L(x) = o(\log x)$.
	%	Moreover, it follows that
	In particular, we can write, using $xL'(x)/L(x) \to 0$,
	\begin{align} \label{eq:diff_log_L_as_an_integral}
	\log L(y) - \log L(x) 
	= \int_x^y \frac{1}{u} \cdot \frac{u L'(u)}{L(u)} \diff u
	= \petito{\int_x^y \frac{1}{u} \diff u}
	= \petito{\log \left( \frac{y}{x} \right)},
	\end{align}
	which proves \eqref{eq:difference_log_L}.

	\noindent
	\textit{Part \ref{it:slowly_varying_2}.}
	%	Now, assume $L$ satisfies \hyperref[hyp_SV_k]{$(\mathrm{SV}_2)$}. Then, i
	It follows from a derivative calculation that $\widetilde{L}$ satisfies \hyperref[hyp_SV_k]{$(\mathrm{SV}_1)$}.
	%$x\widetilde{L}'(x)/\widetilde{L}(x) \to 0$.
	Moreover, we have
	\begin{align*}
	\log L(y) - \log L(x)
	& = \int_x^y \frac{1}{u} \log \widetilde{L}(u) \diff u 
	= \log \widetilde{L}(y)
	\cdot \log \left( \frac{y}{x} \right)
	+ \int_x^y \frac{1}{u} \cdot 
	\left( \log \widetilde{L}(u) - \log \widetilde{L}(y) \right) \diff u \nonumber \\
	& = \log \widetilde{L}(y)
	\cdot \log \left( \frac{y}{x} \right)
	+ \int_x^y \frac{1}{u} \cdot \petito{\log \left( \frac{y}{u} \right)} \diff u,
	\end{align*}
	using \eqref{eq:difference_log_L} but for $\widetilde{L}$. This yields \eqref{eq:difference_log_L_precis}.
\end{proof}

\begin{lemma} \label{lem:integral_slowly_varying}
	Let $\alpha > 0$.
	Assume $L \colon [1,\infty) \to (0,\infty)$ satisfies \hyperref[hyp_SV_k]{$(\mathrm{SV}_1)$}.
	Let $(\ell_t)_{t \geq 1}$ be an increasing sequence such that $\ell_{t} \to \infty$ and $\ell_{t-1}/\ell_t \to 1$.
	Then, as $t \to \infty$, 
	\begin{equation} \label{eq:integral_slowly_varying}
	\int_{\ell_{t-1}}^{\ell_t} x^{-\alpha} L(x) \diff x 
	= \ell_t^{-\alpha} (\ell_t-\ell_{t-1}) L(\ell_t)
	\left( 1 + \frac{\alpha}{2} \cdot \frac{\ell_t-\ell_{t-1}}{\ell_t} (1+o(1))\right).
	\end{equation}
\end{lemma}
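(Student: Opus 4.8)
The plan is to perform a second-order Taylor expansion of the integrand around the right endpoint $\ell_t$, controlling the variation of the slowly varying factor $L$ using the estimate \eqref{eq:difference_log_L} from Lemma~\ref{lem:slowly_varying}\ref{it:slowly_varying_1}. Write $\Delta \coloneqq \ell_t - \ell_{t-1}$, so by hypothesis $\Delta/\ell_t \to 0$. First I would substitute $x = \ell_t - s$ with $s$ ranging over $[0,\Delta]$, so that
\[
\int_{\ell_{t-1}}^{\ell_t} x^{-\alpha} L(x)\diff x = \int_0^{\Delta} (\ell_t - s)^{-\alpha} L(\ell_t - s)\diff s.
\]
For the power factor, expand $(\ell_t - s)^{-\alpha} = \ell_t^{-\alpha}(1 - s/\ell_t)^{-\alpha} = \ell_t^{-\alpha}\bigl(1 + \alpha s/\ell_t + O(s^2/\ell_t^2)\bigr)$, uniformly for $s \in [0,\Delta]$ since $\Delta/\ell_t \to 0$.

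For the slowly varying factor, I would use that, by \eqref{eq:difference_log_L} applied with the pair $(\ell_t - s, \ell_t)$ (valid uniformly since $\ell_t - s \to \infty$ and $\ell_t/(\ell_t-s) \to 1$ is bounded),
\[
\log L(\ell_t - s) - \log L(\ell_t) = o\!\left(\log\frac{\ell_t}{\ell_t - s}\right) = o\!\left(\frac{s}{\ell_t}\right),
\]
hence $L(\ell_t - s) = L(\ell_t)\bigl(1 + o(s/\ell_t)\bigr)$ uniformly in $s \in [0,\Delta]$. Multiplying the two expansions gives
\[
(\ell_t - s)^{-\alpha} L(\ell_t - s) = \ell_t^{-\alpha} L(\ell_t)\left(1 + \frac{\alpha s}{\ell_t} + o\!\left(\frac{s}{\ell_t}\right)\right),
\]
where the $O(s^2/\ell_t^2)$ term is absorbed into $o(s/\ell_t)$ because $s \leq \Delta = o(\ell_t)$. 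Integrating over $s \in [0,\Delta]$:
\[
\int_0^{\Delta}\left(1 + \frac{\alpha s}{\ell_t} + o\!\left(\frac{s}{\ell_t}\right)\right)\diff s = \Delta + \frac{\alpha \Delta^2}{2\ell_t} + o\!\left(\frac{\Delta^2}{\ell_t}\right) = \Delta\left(1 + \frac{\alpha}{2}\cdot\frac{\Delta}{\ell_t}(1+o(1))\right),
\]
which after multiplying back by $\ell_t^{-\alpha} L(\ell_t)$ is exactly \eqref{eq:integral_slowly_varying}.

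The only delicate point is ensuring the $o(\cdot)$ from \eqref{eq:difference_log_L} is genuinely uniform in $s$ over the whole range $[0,\Delta]$, not merely pointwise; this is where the hypothesis $\ell_{t-1}/\ell_t \to 1$ is used, since it guarantees that the ratio $\ell_t/(\ell_t - s)$ stays in a fixed compact subset of $(1,\infty)$ — actually tending to $1$ — so the uniform statement of Lemma~\ref{lem:slowly_varying}\ref{it:slowly_varying_1} (uniformity in $y > x$) applies directly with $x = \ell_t - s$, $y = \ell_t$. Everything else is routine bookkeeping with the two Taylor expansions, and I do not anticipate any real obstacle there.
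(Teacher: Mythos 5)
Your proposal is correct and takes essentially the same approach as the paper: same substitution $x = \ell_t - s$, same reliance on Lemma~\ref{lem:slowly_varying}\ref{it:slowly_varying_1} for the slowly varying factor, and same second-order expansion of the power. The only cosmetic difference is that you multiply the two Taylor expansions before integrating (getting a pointwise-in-$s$ error $o(s/\ell_t)$), whereas the paper decomposes the integral into three pieces and bounds the $L$-error uniformly by $o(\Delta/\ell_t)$; both bookkeeping schemes yield the same conclusion.
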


\begin{proof}
	We decompose
	\begin{align} \label{eq:decompo}
	\int_{\ell_{t-1}}^{\ell_t} x^{-\alpha} L(x) \diff x 
	= (\ell_t-\ell_{t-1}) \ell_t^{-\alpha} L(\ell_t)
	+ \int_{\ell_{t-1}}^{\ell_t} (x^{-\alpha}-\ell_t^{-\alpha}) L(x) \diff x 
	+ \ell_t^{-\alpha} \int_{\ell_{t-1}}^{\ell_t} (L(x)-L(\ell_t)) \diff x.
	\end{align}
	We first deal with the third term on the right-hand side of \eqref{eq:decompo}.
	Using the same representation for $L$ as in \eqref{eq:diff_log_L_as_an_integral} and the fact that $xL'(x)/L(x) \to 0$, we have, uniformly in $x \in [\ell_{t-1},\ell_t]$,
	\begin{align} \label{eq:difference_L}
	\frac{L(x)-L(\ell_t)}{L(\ell_t)}
	= \exp \left( - \int_x^{\ell_t} \frac{1}{y} \cdot \frac{y L'(y)}{L(y)} \diff y  \right) - 1
	= \exp \left( \petito{\log \frac{\ell_t}{\ell_{t-1}}} \right) - 1
	= \petito{\frac{\ell_t-\ell_{t-1}}{\ell_t}},
	\end{align}
	using that $(\ell_t-\ell_{t-1})/\ell_t \to 0$ as a consequence of the assumptions of the lemma.
	Therefore, we get
	\begin{align*}
	\ell_t^{-\alpha}\int_{\ell_{t-1}}^{\ell_t} (L(x)-L(\ell_t)) \diff x
	& = \petito{\ell_t^{-\alpha} \frac{\ell_t-\ell_{t-1}}{\ell_t} L(\ell_t) \int_{\ell_{t-1}}^{\ell_t}  \diff x}
	= \petito{\ell_t^{-\alpha} \frac{(\ell_t-\ell_{t-1})^2}{\ell_t} L(\ell_t)},
	\end{align*}
	proving that this term can be included in the $o(1)$ term on the right-hand side of \eqref{eq:integral_slowly_varying}.
	We now deal with the second term on the right-hand side of \eqref{eq:decompo}.
	By \eqref{eq:difference_L}, we have $L(x) = L(\ell_t) (1+o(1))$ uniformly in $x \in [\ell_{t-1},\ell_t]$.
	Therefore, with the change of variable $x = \ell_t-u$, we have
	\begin{align*}
	\int_{\ell_{t-1}}^{\ell_t} (x^{-\alpha}-\ell_t^{-\alpha}) L(x) \diff x 
	& = \ell_t^{-\alpha} L(\ell_t) (1+o(1)) \int_{0}^{\ell_t-\ell_{t-1}} \left( \left( 1 - \frac{u}{\ell_t} \right)^{-\alpha}-1 \right) \diff u \\
	& \sim \ell_t^{-\alpha} L(\ell_t) 
	\int_{0}^{\ell_t-\ell_{t-1}} \frac{\alpha u}{\ell_t} \diff u \\
	& \sim (\ell_t-\ell_{t-1}) \ell_t^{-\alpha} L(\ell_t) 
	\cdot \frac{\alpha}{2} \cdot \frac{\ell_t-\ell_{t-1}}{\ell_t}.
	\end{align*}
	Coming back to \eqref{eq:decompo}, this concludes the proof.
\end{proof}

%\begin{lemma} \label{lem:auxiliary function J} 
%	Let $\alpha > 1$.
%	Assume $L \colon [1,\infty) \to (0,\infty)$ satisfies \ref{hyp_SV_0} and is locally bounded. 
%	Let
%	\begin{equation}\label{eq:definition J}
%		J(x) \coloneqq x^{\alpha-1} \int_x^\infty y^{-\alpha} L(y) \diff y.
%	\end{equation}
%	Then, $J$ satisfies \hyperref[hyp_SV_k]{$(\mathrm{SV}_1)$} and $J(x) \sim L(x)/(\alpha-1)$ as $x \to \infty$.
%\end{lemma}
%%%
%\begin{proof}
%Since $L$ is slowly varying, we have $J(x) \sim L(x)/(\alpha-1)$, see e.g.\@ \cite[Proposition~1.5.10]{binghamgoldieteugels1989}.
%Moreover, $J$ is differentiable and
%\begin{equation*}
%\frac{x J'(x)}{J(x)} = (\alpha-1) - \frac{L(x)}{J(x)} \xrightarrow[x\to\infty]{} 0,
%\end{equation*}
%which proves $J$ satisfies \hyperref[hyp_SV_k]{$(\mathrm{SV}_1)$}.
%\end{proof}

\begin{lemma} \label{lem:from_J_to_L}
	Assume $J \colon [1,\infty) \to (0,\infty)$ satisfies \hyperref[hyp_SV_k]{$(\mathrm{SV}_k)$} for some $k \geq 1$.
	\begin{enumerate}
		\item\label{it:from_J_to_L_1} For any $\alpha \in (0,1)$, there exists a function $L \colon [1,\infty) \to (0,\infty)$ satisfying \hyperref[hyp_SV_k]{$(\mathrm{SV}_{k-1})$} such that $L(x) \sim (1-\alpha) J(x)$ and, for $x$ large enough,
		\begin{equation} \label{eq:function_J_alpha<1}
		x^{1-\alpha} J(x) = \int_1^x y^{-\alpha} L(y) \diff y.
		\end{equation}
		\item\label{it:from_J_to_L_2} For any $\alpha > 1$, there exists a function $L \colon [1,\infty) \to (0,\infty)$ satisfying \hyperref[hyp_SV_k]{$(\mathrm{SV}_{k-1})$} such that $L(x) \sim (\alpha-1) J(x)$ and, for $x$ large enough,
		\begin{equation} \label{eq:function_J_alpha>1}
		x^{1-\alpha} J(x) = \int_x^\infty y^{-\alpha} L(y) \diff y.
		\end{equation}
	\end{enumerate}
\end{lemma}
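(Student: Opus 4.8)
The plan is to construct $L$ explicitly by "differentiating" the desired integral identities, then to verify the claimed smoothness and asymptotics. Consider first Part~\ref{it:from_J_to_L_1}. If \eqref{eq:function_J_alpha<1} is to hold for large $x$, differentiating both sides forces
\[
	L(x) = x^\alpha \cdot \frac{\diff}{\diff x}\left( x^{1-\alpha} J(x) \right)
	= (1-\alpha) J(x) + x J'(x).
\]
So I would simply \emph{define} $L$ by this formula for $x \geq A$ (where $A$ is large enough that the expression is positive — this is possible since $xJ'(x)/J(x) \to 0$ by \hyperref[hyp_SV_k]{$(\mathrm{SV}_k)$}, so $L(x)/J(x) \to 1-\alpha > 0$), and extend $L$ arbitrarily to a positive function on $[1,A)$. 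Then \eqref{eq:function_J_alpha<1} holds up to an additive constant; to get it on the nose I would absorb that constant by further adjusting $L$ on a bounded interval (e.g.\ replace $L$ by $L + c\,\mathbbm{1}_{[A,A+1]}$ for a suitable constant $c$, which does not affect the asymptotics), or alternatively change the claim's phrasing to "for $x$ large enough" with an implicit lower endpoint $A$ instead of $1$ — since the integral from $1$ to $A$ is just a fixed constant, this is immaterial and I would handle it by redefining $L$ on $[1,A+1]$ to fix the constant. The relation $L(x) \sim (1-\alpha)J(x)$ is then immediate from $xJ'(x) = o(J(x))$.

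The remaining point for Part~\ref{it:from_J_to_L_1} is that $L$ satisfies \hyperref[hyp_SV_k]{$(\mathrm{SV}_{k-1})$}. Here I would compute derivatives of $L(x) = (1-\alpha)J(x) + xJ'(x)$ directly: $L^{(i)}(x) = (1-\alpha)J^{(i)}(x) + x J^{(i+1)}(x) + i J^{(i)}(x)$ for $1 \leq i \leq k-1$, so
\[
	\frac{x^i L^{(i)}(x)}{L(x)}
	= \frac{(1-\alpha+i)\, x^i J^{(i)}(x) + x^{i+1} J^{(i+1)}(x)}{(1-\alpha)J(x) + xJ'(x)}.
\]
Since $L(x) \sim (1-\alpha) J(x)$ and each of $x^j J^{(j)}(x)/J(x) \to 0$ for $1 \leq j \leq k$ by assumption (note $i+1 \leq k$), every term in the numerator is $o(J(x))$, so the whole ratio tends to $0$. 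This gives \hyperref[hyp_SV_k]{$(\mathrm{SV}_{k-1})$} (and when $k=1$, there is nothing to check beyond \ref{hyp_SV_0}, which follows from $L \sim (1-\alpha)J$ and $J$ slowly varying).

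Part~\ref{it:from_J_to_L_2} is entirely parallel, with the only subtlety being convergence of the integral at $+\infty$. Differentiating $x^{1-\alpha}J(x) = \int_x^\infty y^{-\alpha}L(y)\diff y$ gives $-x^{-\alpha}L(x) = \frac{\diff}{\diff x}(x^{1-\alpha}J(x)) = (1-\alpha)J(x) + xJ'(x)$, so I would set
\[
	L(x) = (\alpha-1)J(x) - xJ'(x) \sim (\alpha-1)J(x) > 0
\]
for $x \geq A$, and again extend to a positive function on $[1,A)$. Now $\int^\infty y^{-\alpha} L(y)\diff y$ converges because $L(y) = O(J(y))$ and $J$ slowly varying forces $y^{-\alpha}J(y)$ to be integrable at infinity (since $\alpha > 1$, one has $y^{-\alpha}J(y) = O(y^{-1-\varepsilon})$ for small $\varepsilon>0$, using that a slowly varying function grows slower than any power — Lemma~\ref{lem:slowly_varying}\ref{it:slowly_varying_0}). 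The antiderivative identity then holds up to an additive constant, which I kill by redefining $L$ on a bounded interval or by choosing the correct lower cutoff; and the smoothness computation for $x^i L^{(i)}(x)/L(x)$ is identical to before, with signs flipped. The main (mild) obstacle throughout is the bookkeeping around the constant of integration and the lower endpoint — none of it is deep, but one must be careful that adjusting $L$ on a compact set preserves both the asymptotic equivalence and the derivative conditions (which it does, vacuously, since those are statements about $x \to \infty$).
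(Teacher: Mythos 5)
Your proposal is correct and takes essentially the same route as the paper: both define $L$ on a terminal ray by the derivative expression $(1-\alpha)J(x) + xJ'(x)$ (up to sign for $\alpha>1$), verify $(\mathrm{SV}_{k-1})$ by a direct Leibniz-rule computation, and adjust $L$ on a compact set to match the constant of integration. The only inessential quibble is that in Part~(ii) no additive constant actually needs killing, since $x^{1-\alpha}J(x)\to 0$ forces the constant to be zero automatically.
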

\begin{proof}
	Let $\alpha \in \R$.
	Let $L_0(x) \coloneqq (1-\alpha)J(x) + x J'(x)$, so that $x^{-\alpha} L_0(x) = \frac{\diff}{\diff x} (x^{1-\alpha} J(x))$.
	Then, if $\alpha \neq 1$, it follows from \hyperref[hyp_SV_k]{$(\mathrm{SV}_1)$} for $J$ that $L_0(x) \sim (1-\alpha)J(x)$.
	Using this fact and \hyperref[hyp_SV_k]{$(\mathrm{SV}_k)$} for $J$, we get, for any $i \in \llbracket 1,k-1 \rrbracket$,
	\begin{equation} \label{eq:derivative_L_0}
	\frac{x^i L_0^{(i)}(x)}{L_0(x)}
	= (1-\alpha) \frac{x^i J^{(i)}(x)}{L_0(x)} + \frac{x^{i+1} J^{(i+1)}(x)}{L_0(x)}
	\xrightarrow[x \to \infty]{} 0.
	\end{equation}
	\textit{Part \ref{it:from_J_to_L_1}.} Assume that $\alpha \in (0,1)$.
	We write 
	\[
	x^{1-\alpha} J(x) = J(1) + \int_{1}^x y^{-\alpha} L_0(y) \diff y,
	\]
	Since $L_0(x) \sim (1-\alpha)J(x)$, there exists $x_0$ such that for $x \geq x_0$, $L_0(x) > 0$.
	We define $L(x) = L_0(x)$ for $x \geq x_0$.
	This ensures that $L$ satisfies \hyperref[hyp_SV_k]{$(\mathrm{SV}_{k-1})$} by \eqref{eq:derivative_L_0}.
	For $x < x_0$, we can choose $L(x)$ such that $L$ is $k-1$ times differentiable positive and 
	\[
	J(1) + \int_1^{x_0} y^{-\alpha} L_0(y) \diff y = \int_1^{x_0} y^{-\alpha} L(y) \diff y.
	\]
	Then \eqref{eq:function_J_alpha<1} holds for any $x \geq x_0$.
	
	\noindent
	\textit{Part \ref{it:from_J_to_L_2}.} Assume that $\alpha > 1$.
	Then, there exists $x_0$ such that for $x \geq x_0$, we have $L_0(x) < 0$. We choose $L(x) = -L_0(x)$ for $x \geq x_0$.
	Moreover, we have
	\[
	x^{1-\alpha} J(x) = - \int_x^\infty y^{-\alpha} L_0(y) \diff y,
	\]
	so \eqref{eq:function_J_alpha>1} holds for any $x \geq x_0$.
\end{proof}

%
%\MP{Je pense pas que ce qui est ci-dessous soit à garder.
%\begin{lemma} \label{lem:from_J_to_L}
%	Let $\alpha \neq 1$.
%	Assume $J \colon [1,\infty) \to (0,\infty)$ satisfies \hyperref[hyp_SV_k]{$(\mathrm{SV}_k)$} for some $k \geq 1$.
%	Then, there exists a function $L \colon [1,\infty) \to (0,\infty)$ satisfying \hyperref[hyp_SV_k]{$(\mathrm{SV}_{k-1})$} such that, for $x$ large enough,
%	\[
%		x^{1-\alpha} J(x) = \int_1^x y^{-\alpha} L(y) \diff y.
%	\]
%\end{lemma}
%\begin{proof}
%		Same as above. \textbf{Attention:} $L$ negative if $\alpha >1$.
%\end{proof}
%
%
%\begin{lemma} \label{lem:from_J_to_L}
%	Let $\alpha = 1$.
%	Assume $J \colon [1,\infty) \to (0,\infty)$ satisfies \hyperref[hyp_SV_k]{$(\mathrm{SV}_k)$} for some $k \geq 1$.
%	Moreover, assume one of the following
%	\begin{itemize}
%		\item The function $x \mapsto x J'(x)$ is positive and satisfies \hyperref[hyp_SV_k]{$(\mathrm{SV}_{k-1})$};
%		\item The function $x \mapsto x J'(x)$ is eventually monotonous.
%	\end{itemize} 
%	Then, there exists a function $L \colon [1,\infty) \to (0,\infty)$ satisfying \hyperref[hyp_SV_k]{$(\mathrm{SV}_{k-1})$} such that, for $x$ large enough,
%	\[
%		J(x) = \int_1^x y^{-1} L(y) \diff y.
%	\]
%\end{lemma}
%\begin{proof}
%	Take $L(x) = x J'(x)$ for $x$ large enough.
%	With the first assumption the conclusion is obvious.
%	For the second assumption use \cite[Theorem 1.7.2]{binghamgoldieteugels1989}.
%	\textbf{Attention:} $L$ negative if $x J'(x)$ is negative.
%\end{proof}
%}

%%%%%%%%%%%%%%%%%%%%%%%%%%%%%%%%%%%%%%%%%%%%%%%%%%
\section{Converting assumptions on \texorpdfstring{$a_n$}{an} to \texorpdfstring{$W_n$}{Wn}}\label{sec:correspondence behavior an and log Wn}
%%%%%%%%%%%%%%%%%%%%%%%%%%%%%%%%%%%%%%%%%%%%%%%%%

%In this section, we state and prove Lemma~\ref{lem:correspondence behavior an and log Wn} which provides a correspondence between the behavior of $W_n$ and $a_n$.
%\begin{lemma}\label{lem:correspondence behavior an and log Wn}
%For any weight sequence $(w_n)_{n\geq 1}$ for which $\sum_{i=1}^{\infty} w_i^2/W_i^2 < \infty$, there exists a constant $K$ such that, as $n \to \infty$,
%\[
%	a_n = \log W_n + K + O\left(\sum_{i=n+1}^{\infty}\left(\frac{w_i}{W_i}\right)^2\right).
%\]
%\end{lemma}
\begin{proof}[Proof of \eqref{eq:transfer assum Wn an}]
We denote $\delta(x) \coloneqq \log(1-x)+x$ and then we write
\[
\frac{W_1}{W_n}=\prod_{i=2}^{n}\frac{W_{i-1}}{W_{i}}
=\exp\left(\sum_{i=2}^{n} \log\left(1 - \frac{w_i}{W_i}\right)\right)
=\exp \left(- \sum_{i=2}^{n} \frac{w_i}{W_i} + \sum_{i=2}^{n}  \delta\left(\frac{w_i}{W_i}\right)\right).
% \\
% &=\exp \left(- \sum_{i=2}^{n} \frac{w_i}{W_i} 
% + \sum_{i=2}^{\infty} \delta \left(\frac{w_i}{W_i}\right) - \sum_{i=n+1}^{\infty }  \delta \left(\frac{w_i}{W_i}\right)\right).
\]
This yields
\[
a_n= 1+\sum_{i=2}^{n} \frac{w_i}{W_i} 
= 1+ \log \left(\frac{W_n}{W_1}\right) 
- \sum_{i=2}^{\infty} \delta\left(\frac{w_i}{W_i}\right) 
+ \sum_{i=n+1}^{\infty }  \delta\left(\frac{w_i}{W_i}\right)
= \log(W_n) +K + O\left(\sum_{i=n+1}^{\infty} \frac{w_i^2}{W_i^2} \right),
\]
as $n \to \infty$, since $\delta(x)=O(x^2)$ as $x\rightarrow 0$.
\end{proof}

%%%%%%%%%%%%%%%%%%%%%%%%%%%%%%%%%%%%%%%%%%%%%%%%%%%%%%%%%%%%%%%%%%%%
\section{Application to specific weight sequences}\label{sec:application}
%%%%%%%%%%%%%%%%%%%%%%%%%%%%%%%%%%%%%%%%%%%%%%%%%%%%%%%%%%%%%%%%%%%%

This section contains the computations needed to verify that we can indeed apply Theorem~\ref{thm:powers_of_log} and Theorem~\ref{thm:quickly} to the examples that we presented in Section~\ref{sec:powers_of_log_intro} and Section~\ref{sec:quickly converging intro} in the introduction.
Before diving into computations, let us note that if $a_n$ satisfies 
\begin{align*}
a_n = \int_{1}^{\log n} x^{-\alpha} L(x) \diff x + C + o\left( (\log n)^{-1-\alpha} (\log \log n)^2 L(\log n) \right).
\end{align*}
for some $\alpha>0$, some constant $C$, and some function $L$ that satisfies \hyperref[hyp_SV_k]{$(\mathrm{SV}_2)$}, we can change the values of $L$ on a finite interval without changing its regularity in such a way that assumption \eqref{eq:assumption_a_n_1} holds.
Hence, in the examples, we only check without loss of generality that the above display holds for some constant $C$. 
To do that we additionally use \eqref{eq:transfer assum Wn an}, which allows us to only check the asymptotics of $W_n$.
The main difficulty is that, in order to get a small enough error term, we sometimes need to include corrective terms in the definition of $L$.

\medskip

\noindent
\textbf{Case $w_n=\frac{\lambda (1-\alpha)}{n(\log n)^\alpha}\cdot \exp(\lambda (\log n)^{1-\alpha})$ with $\alpha \in \intervalleoo{0}{1}$ and $\lambda >0$.}
First, using a sum-integral comparison we get that $W_n=\exp(\lambda (\log n)^{1-\alpha})+\grandO{1}$.
It is easy to check from there that $\sum_{i \geq n} w_i^2/W_i^2 = O(n^{-1} (\log n)^{-2\alpha})$ so that \eqref{eq:ass_somme_carre} holds and also that 
\[
\log(W_n)
= \lambda (\log n)^{1-\alpha} + \grandO{\exp(-\lambda (\log n)^{1-\alpha})}
= \int_{1}^{\log n} x^{-\alpha}\lambda (1-\alpha)  \diff x + \grandO{(\log n)^{-1-\alpha}}.
\]
Using the last display and \eqref{eq:transfer assum Wn an}, we get that $a_n$ satisfies \eqref{eq:assumption_a_n_1} with a function $L$ chosen such that $L$ is twice differentiable and $L(x)=\lambda (1-\alpha)$ for $x$ sufficiently large. This ensures that we can apply Theorem~\ref{thm:powers_of_log}.
%Applying Theorem~\ref{thm:powers_of_log} we get 
%\begin{align*}
%\haut(\ttT_n)
%& = \frac{\log n}{\alpha \log \log n} 
%+ \frac{\log n}{(\alpha \log \log n)^2} 
%\left(\log \log \log n  + 1+\alpha +\log (\lambda \alpha(1-\alpha)) \right) + o \left( \frac{\log n}{(\log \log n)^2} \right).
%\end{align*}

\medskip 

\noindent \textbf{Case $w_n=\frac{1}{n}$.}
We have $W_n = \sum_{i=1}^{n}\frac{1}{i}=\log n +\gamma+\grandO{\frac{1}{n}}$, where $\gamma$ is the Euler--Mascheroni constant.
We can easily check that $\sum_{i \geq n} w_i^2/W_i^2 = O(n^{-1} (\log n)^{-2})$, which ensures that \eqref{eq:ass_somme_carre} holds.
We also get
\begin{align*}
\log W_n = \log \left(\log n +\gamma+\grandO{\frac{1}{n}}\right) &= \log \log n + \frac{\gamma}{\log n} + \grandO{\frac{1}{(\log n)^2}}\\
&= \int_{1}^{\log n} \frac{1}{x}\left(1-\frac{\gamma}{x}\right) \diff x + \gamma+\grandO{\frac{1}{(\log n)^2}}.
\end{align*} 
Now we use \eqref{eq:transfer assum Wn an} and the last display to ensure that $a_n$ satisfies \eqref{eq:assumption_a_n_1} with a function $L$ chosen such that $L$ is twice differentiable and $L(x)=1-\frac{\gamma}{x}$ for $x$ sufficiently large. This ensures that we can apply Theorem~\ref{thm:powers_of_log}.
%Applying Theorem~\ref{thm:powers_of_log} we get that in that case
%\begin{align*}
%\haut(\ttT_n)
%& = \frac{\log n}{\log \log n} 
%+ \frac{\log n}{(\log \log n)^2} 
%\left(\log \log \log n + 2 \right) + o \left( \frac{\log n}{(\log \log n)^2} \right).
%\end{align*}

\medskip 

\noindent \textbf{Case $w_n=\frac{1}{n(\log n)^\alpha}$ with $\alpha>1$.}
In that case, $W_n= W_\infty - \frac{(\log n)^{1-\alpha}}{\alpha-1} + \grandO{\frac{1}{n}}$.
We can first easily check that $\sum_{i \geq n} w_i^2/W_i^2 = O(n^{-1} (\log n)^{-2\alpha})$, which ensures that \eqref{eq:ass_somme_carre} holds. 
Also for any $k\geq 1$ we have
\begin{align*}
\log W_n &= \log\left(W_\infty - (W_\infty - W_n)\right)\\
&=\log W_\infty + \log \left(1 - \frac{(\log n)^{1-\alpha}}{W_\infty(\alpha-1)}+ \grandO{\frac{1}{n}}\right)\\
&= \log W_\infty - \sum_{i=1}^{k} \frac{1}{i}\frac{(\log n)^{i(1-\alpha)}}{W_\infty^i(\alpha-1)^i} + \grandO{(\log n)^{(k+1)(1-\alpha)}}\\
&=\log W_\infty - \int_{\log n}^\infty x^{-\alpha} \left( \sum_{i=1}^{k} \frac{x^{(i-1)(1-\alpha)}}{W_\infty^i(\alpha-1)^{i-1}} \right) \diff x 
+ \grandO{(\log n)^{(k+1)(1-\alpha)}}\\
&= \int_1^{\log n} x^{-\alpha} \left( \sum_{i=0}^{k-1} \frac{x^{i(1-\alpha)}}{W_\infty^{i+1}(\alpha-1)^i} \right) \diff x 
+ C + \grandO{(\log n)^{(k+1)(1-\alpha)}}.
\end{align*}
Now choosing $k$ so that $(k+1)(1-\alpha)\leq -1-\alpha$, and using \eqref{eq:transfer assum Wn an} again we get that 
$a_n$ satisfies \eqref{eq:assumption_a_n_1} with a function $L$ chosen such that $L$ is twice differentiable and $L(x) = \sum_{i=0}^{k-1} \frac{x^{i(1-\alpha)}}{W_\infty^{i+1}(\alpha-1)^i}$ for $x$ sufficiently large. This ensures that we can apply Theorem~\ref{thm:powers_of_log}.

\medskip
\noindent
\textbf{Case $w_n=n^{-\alpha}$ for $\alpha>1$.} 
It is easy to check that we then have $\sum_{i\geq n} w_i^2/W_i^2 = \grandO{n^{1-2\alpha}}$ so that \eqref{eq:ass_somme_carre_beta=1} holds. 
We have $W_n=W_\infty - \sum_{i=n+1}^{\infty}n^{-\alpha}= W_\infty - \frac{n^{1-\alpha}}{\alpha-1} + \petito{n^{1-\alpha}}$ so that using \eqref{eq:transfer assum Wn an} we get 
\[
a_\infty - a_n 
= \log W_\infty - \log W_n + \grandO{\sum_{i\geq n+1}\frac{w_i^2}{W_i^2}}
= \frac{n^{1-\alpha}}{W_\infty(\alpha-1)}\cdot \left(1 + \petito{1}\right).
\]
Hence, the hypotheses of Theorem~\ref{thm:quickly} are satisfied with $\beta=1$ and $J(x) = 1/(W_\infty(\alpha-1))$.

\addcontentsline{toc}{section}{References}
\bibliographystyle{abbrv}

\end{document}